\numberwithin{equation}{section}
\newtheorem{thm}[subsection]{Theorem}
\newtheorem{cor}[subsection]{Corollary}
\newtheorem{lem}[subsection]{Lemma}
\newtheorem{prop}[subsection]{Proposition}
\newtheorem{thmalpha}{Theorem}
\theoremstyle{definition}
\newtheorem{df}[subsection]{Definition}
\newtheorem{rmk}[subsection]{Remark}
\newtheorem{exm}[subsection]{Example}
\newtheorem{const}[subsection]{Construction}
\def\@tocline#1#2#3#4#5#6#7{\relax
  \ifnum #1>\c@tocdepth 
  \else
    \par \addpenalty\@secpenalty\addvspace{#2}%
    \begingroup \hyphenpenalty\@M
    \@ifempty{#4}{%
      \@tempdima\csname r@tocindent\number#1\endcsname\relax
    }{%
      \@tempdima#4\relax
    }%
    \parindent\z@ \leftskip#3\relax \advance\leftskip\@tempdima\relax
    \rightskip\@pnumwidth plus4em \parfillskip-\@pnumwidth
    #5\leavevmode\hskip-\@tempdima
      \ifcase #1
      \or\or \hskip 2em \or \hskip 2em \else \hskip 3em \fi%
      #6\nobreak\relax
    \dotfill\hbox to\@pnumwidth{\@tocpagenum{#7}}\par
    \nobreak
    \endgroup
  \fi}
\newcommand{\bE}{\mathbf{E}}
\newcommand{\F}{\mathbb{F}}
\renewcommand{\L}{\mathbb{L}}
\newcommand{\N}{\mathbb{N}}
\renewcommand{\P}{\mathbb{P}}
\newcommand{\Q}{\mathbb{Q}}
\newcommand{\Z}{\mathbb{Z}}
\newcommand{\cE}{\mathcal{E}}
\newcommand{\cM}{\mathcal{M}}
\newcommand{\cO}{\mathcal{O}}
\newcommand{\cY}{\mathcal{Y}}
\DeclareMathOperator{\Hom}{Hom}
\DeclareMathOperator{\Spec}{Spec}
\DeclareMathOperator{\Fil}{Fil}
\newcommand{\ul}{\underline}
\newcommand{\fib}{\mathrm{fib}}
\newcommand{\sat}{\mathrm{sat}}
\newcommand{\BGL}{\mathrm{BGL}}
\newcommand{\SmlSm}{\mathrm{SmlSm}}
\newcommand{\lSm}{\mathrm{lSm}}
\newcommand{\logSH}{\mathrm{logSH}}
\newcommand{\pt}{\mathrm{pt}}
\newcommand{\Set}{\mathrm{Set}}
\newcommand{\Fun}{\mathrm{Fun}}
\newcommand{\Sh}{\mathrm{Sh}}
\newcommand{\Sp}{\mathrm{Sp}}
\newcommand{\CAlg}{\mathrm{CAlg}}
\newcommand{\ch}{\mathrm{ch}}
\newcommand{\HK}{\mathrm{HK}}
\newcommand{\dR}{\mathrm{dR}}
\newcommand{\KH}{\mathrm{KH}}
\newcommand{\HC}{\mathrm{HC}}
\newcommand{\HP}{\mathrm{HP}}
\newcommand{\THH}{\mathrm{THH}}
\newcommand{\TC}{\mathrm{TC}}
\newcommand{\cts}{\mathrm{cts}}
\newcommand{\Pic}{\mathrm{Pic}}
\newcommand{\unit}{\mathbf{1}}
\newcommand{\et}{\mathrm{\acute{e}t}}
\newcommand{\setale}{\mathrm{s\acute{e}t}}
\newcommand{\Vect}{\mathrm{Vect}}
\newcommand{\sNis}{\mathrm{sNis}}
\def\lSm{\mathbf{lSm}}
\def\SmlSm{\mathbf{SmlSm}}
\newcounter{elno}
\DeclareSymbolFontAlphabet{\mathbb}{AMSb} 
\DeclareSymbolFontAlphabet{\mathbbl}{bbold} 
\newcommand{\cPrism}{\widehat{\mathlarger{\mathbbl{\Delta}}}}
\begin{document}
\title[The semistable $p$-adic deformation problem]{On the $p$-adic deformation problem \\ for the $K$-theory of semistable schemes}

\author{Federico Binda}
\address{Dipartimento di Matematica ``Federigo Enriques'',  Universit\`a degli Studi di Milano\\ Via Cesare Saldini 50, 20133 Milano, Italy}
\email[F. Binda]{federico.binda@unimi.it}

\author{Tommy Lundemo}
\address{Mathematisch Instituut, Universiteit Utrecht, Budapestlaan 6, 3584 CD Utrecht, Netherlands}
\email[T. Lundemo]{t.lundemo@uu.nl}

\author{Alberto Merici}
\address{Institut f\"ur Mathematik, Universit\"at Heidelberg\\ MATHEMATIKON, Im Neuenheimer Feld 205, 69120  Heidelberg, Germany}
\email[A. Merici]{merici@mathi.uni-heidelberg.de}

\author{Doosung Park}
\address{Department of Mathematics and Informatics, University of Wuppertal, Germany}
\email[D. Park]{dpark@uni-wuppertal.de}

\thanks{T.L.\ is funded by the NWO-grant VI.Veni.242.129; A.M.\ is funded by the Deutsche Forschungsgemeinschaft (DFG, German Research 
Foundation)
TRR 326 \textit{Geometry and Arithmetic of Uniformized Structures}, 
project number 444845124.
D.P.\ is funded by the DFG research training group GRK 2240: \emph{Algebro-Geometric Methods in Algebra, Arithmetic and Topology}.}

\date{\today}
\begin{abstract}
We establish a semistable generalization of the Beilinson–Bloch–Esnault–Kerz fiber square, relating the algebraic $K$-theory of a semistable scheme to its logarithmic topological cyclic homology. We prove that the obstruction to lifting $K$-theory classes is governed by the Hyodo-Kato Chern character. This answers the $p$-adic deformation problem for continuous $K$-theory in the semistable case, extending the work of Antieau–Mathew–Morrow–Nikolaus. As an application, we provide a purely K-theoretic proof of Yamashita's semistable $p$-adic Lefschetz $(1,1)$-theorem.
\end{abstract}
\maketitle
\setcounter{tocdepth}{1}
\section{Introduction}

Let $K$ be a complete discretely valued field of mixed characteristic $(0, p)$ with ring of integers ${\mathcal O}_K$ and perfect residue field $k$. If $X$ is smooth and proper over ${\mathcal O}_K$, the problem of lifting a class $x \in K_0(X_k ; {\Bbb Q})$ in the $K$-theory of the special fiber $X_k$ to the continuous $K$-theory $K^{\rm cts}_0(X ; {\Bbb Q})$ of $X$ is governed by the Hodge filtration on the de Rham cohomology of the generic fiber $X_K$ (see Section \ref{subsec:defk} for a more precise statement). This is a consequence of the cartesian square \begin{equation}\label{eq:introsquare1}\begin{tikzcd}K^{\rm cts}(X ; {\Bbb Q}) \ar{r} \ar{d} & K(X_k ; {\Bbb Q}) \ar{d} \\ \prod_{i \in {\Bbb Z}} {\rm Fil}^{\ge i} R\Gamma_{{\rm dR}}(X_K / K)[2i] \ar{r} & \prod_{i \in {\Bbb Z}} R\Gamma_{{\rm dR}}(X_K / K)[2i] \end{tikzcd}\end{equation} of Antieau--Mathew--Morrow--Nikolaus \cite{AMMN}, after Beilinson \cite{Bei2} and Bloch-Esnault-Kerz \cite{BEK14}. One core ingredient in its construction is \cite[Theorem A]{AMMN}, which provides a cartesian square \begin{equation}\label{eq:introsquare2}\begin{tikzcd}K(R ; {\Bbb Q}_p) \ar{r} \ar{d}{{\rm tr}} & K(R / p ; {\Bbb Q}_p) \ar{d}{{\rm tr}_{\rm crys}} \\ {\rm HC}^-(R ; {\Bbb Q}_p) \ar{r} & {\rm HP}(R ; {\Bbb Q}_p)\end{tikzcd}\end{equation} for any commutative ring $R$, Henselian along $(p)$, whose bottom horizontal map involves invariants built out of Hochschild homology.   

\subsection{The fiber square for log rings} To extend results of this form to the case of semistable special fiber $X_k$, one may want to make use of techniques from log geometry and its relaxed notion of smoothness. Moreover, as the relationship between the squares \eqref{eq:introsquare1} and \eqref{eq:introsquare2} comes to life from the Hochschild--Kostant--Rosenberg filtration, it is for this purpose natural to make use of Rognes' generalization of Hochschild homology to the logarithmic setting \cite{Rog}, since this theory admits an analogous filtration in light of \cite{BLPO}.

The role of affine schemes in log geometry is played by \emph{log rings} $(R, P)$. We have already taken a first step in generalizing the square \eqref{eq:introsquare2} to the logarithmic context, in that we established a cartesian square \cite[Theorem 1.5]{BLMP2} \begin{equation}\label{eq:introsquare3} \begin{tikzcd}{\rm TC}((R, P) ; {\Bbb Q}_p) \ar{r} \ar{d} & {\rm TC}((R / p, P) ; {\Bbb Q}_p) \ar{d} \\ {\rm HC}^-((R, P) ; {\Bbb Q}_p) \ar{r} & {\rm HP}((R, P) ; {\Bbb Q}_p),\end{tikzcd}\end{equation} where all terms involved are built out of Rognes' log (topological) Hochschild homology. 

Under additional geometric assumptions, we can ``merge'' \eqref{eq:introsquare2} and  \eqref{eq:introsquare3} to get the following result, relating ordinary $K$-theory to logarithmic invariants: 

\begin{thmalpha}[See Theorem \ref{thm:beilfibernologktheory}]\label{thm:logbek}  Let $(R, P)$ be an ${\mathcal O}_K^\sharp$-algebra which is vertical, regular, and log regular. If $R$ is Henselian along $(p)$, then there exists a natural cartesian square
\[
\begin{tikzcd}
K(R;\Q_p)\ar[d]\ar[r]&
K(R/p;\Q_p)\ar[d]
\\
\HC^-((R,P)/\cO_K^\sharp;\Q_p)\ar[r]&
\HP((R,P)/\cO_K^\sharp;\Q_p)
\end{tikzcd}
\]
of spectra.
\end{thmalpha}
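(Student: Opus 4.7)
The plan is to bridge from ordinary $K$-theory to logarithmic cyclic invariants by pasting the absolute square \eqref{eq:introsquare2} of \cite{AMMN} with a comparison square of (log) Hochschild-type invariants.

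I first invoke \cite{AMMN}: since $R$ is Henselian along $(p)$, the square \eqref{eq:introsquare2} is cartesian, with ordinary $K$-theory on top and absolute $\HC^-$, $\HP$ on the bottom. The canonical morphism of log rings $(R, \mathrm{triv}) \to (R, P)$ together with base change along $\mathbb{S} \to \cO_K^\sharp$ induces a map of squares from \eqref{eq:introsquare2} to the target square of the statement, restricting to the identity on the top row. By the pasting lemma for pullbacks, the target square is cartesian if and only if the ``comparison'' square with top row $\HC^-(R;\Q_p) \to \HP(R;\Q_p)$ and bottom row $\HC^-((R,P)/\cO_K^\sharp;\Q_p) \to \HP((R,P)/\cO_K^\sharp;\Q_p)$ is cartesian. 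This reduction is the structural heart of the argument, and converts a $K$-theoretic question into a purely Hochschild-theoretic one.

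Next, to verify cartesianness of the comparison square I would pass to the Hochschild--Kostant--Rosenberg filtrations: the classical rational HKR in the top row, and its logarithmic counterpart from \cite{BLPO} in the bottom row. Both rows then become complete exhaustive filtered spectra whose associated graded pieces are given by (log) derived de Rham equipped with the Hodge filtration. Cartesianness reduces to showing, for each $i$, that the square relating $\Fil^{\geq i}\dR_{R/\Z} \to \dR_{R/\Z}$ to $\Fil^{\geq i}\dR^{\log}_{(R,P)/\cO_K^\sharp} \to \dR^{\log}_{(R,P)/\cO_K^\sharp}$ is cartesian rationally. Under the standing hypotheses -- $R$ regular and $(R,P)$ vertical and log regular over $\cO_K^\sharp$ -- derived (log) de Rham collapses onto the classical version, so the problem becomes a concrete comparison of Hodge filtrations on log versus absolute de Rham.

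The main obstacle I expect is precisely this graded Hodge-level comparison: tracking how the ``extra'' log differentials of $(R,P)$ over $\cO_K^\sharp$ -- those not visible in the absolute de Rham of $R$ -- interact with the Hodge filtration so that the quotients $\dR/\Fil^{\geq i}$ match on both sides. The verticality and log regularity hypotheses are precisely what one expects to exploit here, ensuring that the relevant $d\log$-contributions can be identified with honest differentials after rationalization via a residue-type short exact sequence relating $\Omega^{\bullet,\log}$ to $\Omega^\bullet$, and that the interaction with the uniformizer of $\cO_K$ behaves well. Once this graded comparison is established, completeness of both HKR filtrations upgrades it to the filtered level, yielding cartesianness of the comparison square and, by the initial pasting step, of the square in the statement.
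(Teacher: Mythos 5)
Your proposal is correct in substance but organizes the argument differently from the paper. The paper never invokes \cite[Theorem A]{AMMN} (the square \eqref{eq:introsquare2}) as a black box; instead it stacks four cartesian squares passing through $\TC$: the Dundas--Goodwillie--McCarthy/Clausen--Mathew--Morrow square $K/\TC$, a square comparing $\TC(R)$ with $\TC((R,P))$ (Proposition \ref{prop:tccartnolog}), the logarithmic $\TC$-to-$\HC^-/\HP$ square \eqref{eq:introsquare3} from \cite{BLMP2}, and a base-change square from absolute to $\cO_K^\sharp$-relative invariants (Proposition \ref{prop:nodvrbase}). Your route bypasses $\TC$ entirely: you paste \eqref{eq:introsquare2} with a single ``comparison'' square of Hochschild-type invariants, using the two-out-of-three property for cartesian squares of spectra (valid here since everything is stable). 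This is legitimate, and the comparison square you need is indeed cartesian: its cartesianness is equivalent to $\HC(R;\Q_p)\to\HC((R,P)/\cO_K^\sharp;\Q_p)$ being an equivalence, which the paper establishes in two steps --- the absolute comparison $\HC(R;\Q_p)\simeq\HC((R,P);\Q_p)$ via local charts $P=\N^d$, the residue sequence of \cite{RSS15}, and the verticality-driven vanishing of Lemma \ref{lem:hcq0}, followed by the base-change equivalence of Proposition \ref{prop:nodvrbase} via quasi-isogeny of $\L_{\cO_K^\sharp/\Z}$. Where you differ technically is that you propose to verify this on the associated graded of the (log) HKR filtrations, i.e.\ as a Hodge-filtered comparison of derived de Rham against log derived de Rham; this reduces to showing $\wedge^j\L_{R/\Z}\to\wedge^j\L_{(R,P)/\cO_K^\sharp}$ is a rational $p$-adic equivalence for all $j$, which does follow from verticality and log regularity by essentially the same residue-sequence and torsion arguments, but you have left this step --- which you rightly flag as the heart of the matter --- unexecuted, whereas the paper carries it out directly at the level of $\HC$ without descending to graded pieces. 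What your packaging buys is economy (no log $\TC$, no cube argument, and \eqref{eq:introsquare3} is not needed); what the paper's buys is that the intermediate $\TC$-squares are reused elsewhere (notably in Theorem \ref{prop:old-thm-A}, where the factorization through $\KH((R/p,P))$ genuinely requires the log cyclotomic trace and the $\TC$-level squares), so your shortcut would not suffice for the $\KH$-variant.
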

Here, ${\mathcal O}_K^\sharp$ denotes ${\mathcal O}_K$ equipped with its canonical log structure ${\mathcal O}_K - \{0\}$ (a chart is given by $\N\to \mathcal{O}_K, 1\mapsto \pi$ for $\pi\in \mathcal{O}_K$ a uniformizer). We refer to the beginning of Section \ref{sec:logbek} for an explanation of the adjectives involved in the statement. It is important, however, to remark now that typical examples of regular, log regular, and vertical $\mathcal{O}_K^{\sharp}$-algebras are the rings of the form $\mathcal{O}_K[X_1, \ldots, X_n]/(X_1\cdots X_i-\pi)$, for $1\leq i\leq n$, equipped with its canonical log structure. This is precisely the local structure of (strict) semistable families over $\Spec(\mathcal{O}_K)$. The proof is presented in Section \ref{sec:logbek}. To achieve this, we analyze the interaction between vertical log structures and rationalized invariants, crucially using the residue sequences of Rognes--Sagave--Schlichtkrull, see \cite{RSS15}. This allows us to reduce the problem to the case of trivial log structures.

\subsection{$p$-adic deformation of \texorpdfstring{$K$}{K}-theory classes and Hyodo-Kato cohomology}\label{subsec:defk} Let us now make precise the claim that the lifting problem considered at the beginning of this introduction is governed by the Hodge filtration of $R\Gamma_{{\rm dR}}(X_K / K)$. In good reduction, there is a crystalline Chern character \[{\rm ch}_{{\rm crys}} \colon K_0(X_k) \to \bigoplus_{i \ge 0} H^{2i}_{{\rm crys}}(X_k ; {\Bbb Q}_p)\] landing in crystalline cohomology. By the de Rham-to-crystalline comparison, this determines a map to $\bigoplus_{i \ge 0} H_{{\rm dR}}^{2i} (X_K / K)$. On $\pi_0$, this recovers the right-hand vertical map of \eqref{eq:introsquare1}, and \cite[Theorem D]{AMMN} states that a class $x \in K_0(X_k ; {\Bbb Q})$ lifts to $K_0^{{\rm cts}}(X ; {\Bbb Q})$ if and only if its image in $\bigoplus_{i \ge 0} H^{2i}_{{\rm dR}}(X_K / K)$ comes from $\bigoplus_{i \ge 0} {\rm Fil}^{\ge i} H^{2i}_{{\rm dR}}(X_K / K)$. This is in fact a generalization of the original statement proved by \cite{BEK14}. 

In bad reduction it is possible to govern the obstruction to lifting classes by means of derived de Rham cohomology, as explained in \cite{AMMN} following Beilinson \cite{Bei2}. More precisely, when $X$ is not smooth nor proper over $\Spec(\cO_K)$, a refined obstruction to lifting classes has been constructed by Beilinson and Antieau--Mathew--Morrow--Nikolaus: for $x \in K_i({X}_k;\mathbb{{Q}})$, there exists a class \[o(x)\in \bigoplus_{r\geq 0} H^{2r-i}(L\Omega_{X} / L\Omega^{\geq r}_{{X}})_{\mathbb{Q}_p}\] such that $x$ lifts to $K^{\rm cts}_i({X};\mathbb{Q}_p)$ if and only if $o(x)=0$. Here, $L\Omega_{X}$ denotes the $p$-adic derived de Rham cohomology of ${X}$. See \cite[Theorem E]{AMMN}.

However, the obstruction class $o(x)$ is rather implicit and lacks a crystalline interpretation, in contrast with  the smooth case.

To tackle this problem, we recall the following fact: when $X$ is proper and has semistable reduction over $\mathcal{O}_K$, the de Rham cohomology of the  generic fiber of ${X}$ can be recovered by means of the Hyodo--Kato cohomology
\begin{equation}\label{eq:HK}
    R\Gamma_{\dR}({X}_K) \simeq R\Gamma_{\HK}({X}_k/k^0)\otimes_{W(k)} K  \end{equation}
of the \emph{logarithmic} special fiber, seen as a smooth log scheme over the \emph{logarithmic point} $k^0 = ( \mathbb{N}\to k, 1\mapsto 0)$. The Hyodo--Kato cohomology is a $(\varphi, N)$-module over $W(k)$, and can be defined using an appropriate version of the crystalline site following work of Kato and Fontaine--Illusie, or in terms of log de Rham-Witt complexes (note however that functoriality is tricky: see \cite{BGV} for a motivic approach working directly on the rigid analytic generic fiber of $\mathfrak{X} = X^{\wedge}_p$). 

In 
Section \ref{sec:hkchern}, we consider a \emph{Hyodo--Kato Chern character} ${\rm ch}_{{\rm HK}}$ from the (ordinary, rationalized) $K$-theory of $X_k$, and express it in terms of a trace map from $K$-theory, much in analogy with the study of the crystalline Chern character ${\rm ch}_{{\rm crys}}$ in \cite{AMMN}. Combining the resulting properties with Theorem \ref{thm:logbek}, we arrive at the following analog of \cite[Theorem D]{AMMN}: 

\begin{thmalpha}[See Theorem \ref{thm:hkchern2}]\label{thm:hkchern} Let $X$ be a proper flat scheme over $\cO_K$ with semistable special fiber $X_k$.
Then a class $x\in K_0(X_k;\Q)$ lifts to $K_0^\cts(X;\Q)$ if and only if the Hyodo-Kato Chern character $\ch_\HK(x)$ is contained in $\bigoplus_{i= 0}^\infty \Fil^{\geq i}H_\dR^{2i}(X_K;\Q_p)$ under the Hyodo-Kato equivalence.
\end{thmalpha}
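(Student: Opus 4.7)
The plan is to globalize Theorem \ref{thm:logbek} to the semistable scheme $X$ equipped with its canonical log structure, identify the bottom row of the resulting cartesian square rationally with the Hodge filtration on $R\Gamma_\dR(X_K/K)$, and finally recognize the right-hand vertical trace as the Hyodo--Kato Chern character via the equivalence \eqref{eq:HK}.

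First, since $X$ has (strict) semistable reduction over $\cO_K$, it admits a Zariski cover by affine opens of the form $\Spec \cO_K[X_1,\ldots,X_n]/(X_1\cdots X_i - \pi)$, which, equipped with their canonical log structures, are vertical, regular, and log regular $\cO_K^\sharp$-algebras. After Henselizing along $(p)$, Theorem \ref{thm:logbek} applies to each such affine, and I would then use descent (continuity of $K^\cts$ as in \cite{AMMN} on one side, and Zariski descent for log cyclic invariants on the other) to glue the local cartesian squares into the global square
\[
\begin{tikzcd}
K^\cts(X;\Q_p)\ar{d}\ar{r}&
K(X_k;\Q_p)\ar{d}{\mathrm{tr}}
\\
\HC^-((X,\cM_X)/\cO_K^\sharp;\Q_p)\ar{r}&
\HP((X,\cM_X)/\cO_K^\sharp;\Q_p),
\end{tikzcd}
\]
where $\cM_X$ denotes the semistable log structure.

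Second, I would invoke the logarithmic HKR filtration of \cite{BLPO}: the rationalized log $\HC^-$ and log $\HP$ of $(X,\cM_X)/\cO_K^\sharp$ carry complete filtrations whose graded pieces recover, respectively, the Hodge pieces and the full log de Rham cohomology (shifted by $[2i]$). Since $X_K$ is smooth and $\cM_X$ is concentrated on the special fiber, the base change identifies $R\Gamma_\dR((X,\cM_X)/\cO_K^\sharp)\otimes_{\cO_K} K \simeq R\Gamma_\dR(X_K/K)$ together with the Hodge filtration. Passing to $\pi_0$, the cartesian square above then yields an exact sequence
\[
K_0^\cts(X;\Q_p)\to K_0(X_k;\Q_p) \oplus \bigoplus_{i\geq 0}\Fil^{\geq i}H^{2i}_\dR(X_K/K)\to \bigoplus_{i\geq 0}H^{2i}_\dR(X_K/K),
\]
so that a class $x\in K_0(X_k;\Q)$ lifts to $K_0^\cts(X;\Q_p)$ precisely when its image in $\bigoplus_i H^{2i}_\dR(X_K/K)$ lies in $\bigoplus_i \Fil^{\geq i}H^{2i}_\dR(X_K/K)$.

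Finally, by the construction of $\ch_\HK$ in Section \ref{sec:hkchern} as a $K$-theoretic trace into rationalized log $\HP$ transported along the Hyodo--Kato equivalence \eqref{eq:HK}, this image is exactly $\ch_\HK(x)$, completing the argument. The main obstacle I anticipate is precisely this last identification: matching the $K$-theoretic trace through rationalized log $\HP$ with the classical Hyodo--Kato Chern character defined via (log) crystalline cohomology of the special fiber. This parallels the role played in \cite{AMMN} by the identification of the trace to $\HP(R/p;\Q_p)$ with the crystalline Chern character, but in the semistable case it additionally requires verifying compatibility between the Hyodo--Kato equivalence \eqref{eq:HK} and the logarithmic HKR identifications used in the previous step.
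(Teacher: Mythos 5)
Your overall architecture coincides with the paper's: globalize Theorem \ref{thm:logbek} to a (formal) semistable log scheme (this is Corollary \ref{cor:beilfibernologktheory} and Theorem \ref{rig.4}), split the rationalized log $\HC^-$/$\HP$ via the logarithmic HKR filtration and Adams operations and transport along the Hyodo--Kato equivalence (Proposition \ref{rig.3} together with \eqref{rig.0.2} and \eqref{rig.0.3}), and then identify the resulting trace with $\ch_\HK$. The first two steps are essentially as in the paper.

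The genuine gap is the final identification, which you flag but also partly misdescribe. In the paper, $\ch_\HK$ is \emph{not} constructed as a $K$-theoretic trace into rationalized log $\HP$; it is built geometrically in Construction \ref{HK.4} via Chern orientations in logarithmic motivic homotopy theory, the projective bundle formula for $W\Omega_{-/(k,P)}$, and Gros's crystalline first Chern class. If $\ch_\HK$ were defined as the trace, the theorem would say nothing beyond Theorem \ref{rig.4}; the entire content of the statement is that the trace agrees with this independently defined, crystalline-compatible Chern character. The paper closes this gap in Proposition \ref{comp.2}: one shows that both $\Vect_n$ and the target of the trace are strict smooth sheaves (Proposition \ref{HK.2}), so the comparison can be evaluated on the universal example $\BGL_{n,\cO_K^\sharp}$ with its tautological bundle; there, Remark \ref{HK.5} (strictness of $\BGL_n$ over the base) reduces the claim to the analogous crystalline statement already verified in \cite[Proof of Proposition 4.12]{AMMN}. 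Note also that the agreement holds only up to a scalar $\lambda\in K^\times$ acting by $\lambda^i$ on the $2i$-th factor (the map $\mathrm{trc}_\HK^\lambda$ of Construction \ref{comp.1}); this is harmless for the lifting criterion since each $\Fil^{\geq i}H^{2i}_\dR$ is stable under scalars, but it is a feature your sketch would need to accommodate. Without an argument of this representability-and-reduction type, the assertion that the image of $x$ under the trace ``is exactly $\ch_\HK(x)$'' is unsubstantiated.
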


In fact, we have a general version involving higher $K$-groups, analogous to \eqref{eq:introsquare1}, see Theorem \ref{rig.4}. 

To the best of the authors' knowledge, Theorem \ref{thm:hkchern} represents the first $K$-theoretic application of logarithmic trace invariants of genuinely geometric origin. We anticipate progress on \cite[Conjecture 1.5]{Lun}, which will lead to a more flexible and transparent relationship between algebraic $K$-theory and logarithmic trace invariants. The fact that results like Theorem \ref{thm:hkchern} are already within reach with the present constructions suggests that this is a worthwhile endeavor. 

To deduce Theorem \ref{thm:hkchern},  we follow the blueprint of \cite{AMMN}. In Section 3, we construct the Hyodo-Kato Chern character, using the theory   of Chern orientations in logarithmic motivic homotopy theory in the sense of \cite{BPO2} and apply it to the logarithmic de Rham-Witt complex $W\Omega_{X/(k,P)}$. This yields a geometric construction of Chern classes in Hyodo-Kato cohomology, compatible with the classical crystalline Chern character. In Section 4, we focus on the rationalized filtrations of logarithmic cyclic homology. We use the logarithmic Hochschild-Kostant-Rosenberg theorem \cite{BLPO} to split these filtrations in terms of the Hodge filtration on the de Rham cohomology of the generic fiber, by means of the Hyodo-Kato isomorphism. This leads to a $K$-theoretic fiber square for semistable schemes involving de Rham cohomology (Theorem \ref{rig.4}). In Section 5, we compare the trace-theoretic construction from Section 2 with the geometric Chern character from Section 3, and prove that, up to a scalar factor, the trace map identifies with the Hyodo-Kato Chern character under the Hyodo-Kato equivalence.

\subsection{A semistable $p$-adic Lefschetz (1,1)-Theorem}
Note that the $K_0$-groups appearing in the statement of Theorem \ref{thm:hkchern} are classical, i.e., non-logarithmic. For line bundles, i.e., for $\Pic(X;\Q)$, the obstruction considered in Theorem \ref{thm:hkchern} is in fact well-known: it is the content of  Yamashita's semistable $p$-adic Lefschetz (1,1)-Theorem  \cite[Theorem 0.1]{Yam11} (in turn, a generalization of a theorem of Berthelot-Ogus \cite[Theorem 3.8]{BO}). 

In fact, Yamashita considers also a logarithmic variant of this result, involving the \emph{logarithmic Picard group}, i.e., $H^1(X, \mathcal{M}^{\rm gp})$, where $\mathcal{M}$ is the sheaf of monoids describing the log structure on $X$ (see \cite{Kato-dieudonne} and \cite[Section 3.3]{MR1307397}). In order to recover (and generalize) his result, we need to be able to replace $K$-theory with some version of logarithmic $K$-theory, computable with effective trace methods.

Since its inception, a severe limitation of logarithmic $\rm TC$ has been the lack of a meaningful trace map from algebraic $K$-theory.
To bridge the gap, we look to recent developments in logarithmic \textit{homotopy invariant} $K$-theory, as discussed in \cite{Par}:
for every finite-dimensional Noetherian ${\mathcal O}_K^{\sharp}$-algebra $(R,P)$ which is regular, log regular, and vertical, by \cite[Theorem F]{Par2} we have a log cyclotomic trace $ K(R[1/p])\to \TC(R,P)$ fitting in the following commutative diagram of spectra:
\begin{equation}\label{eq:log-KH-trc}
\begin{tikzcd}[row sep=small]
\KH((R,P);\Q_p)\ar[d,"\simeq"']\ar[r,leftarrow]&
\KH(R;\Q_p)\ar[d,"\simeq"]\ar[r]&
\KH(R/p;\Q_p)\ar[d,"\simeq"]
\\
K(R[1/p];\Q_p)\ar[d,"{\rm trc}"']\ar[r,leftarrow]&
K(R;\Q_p)\ar[d,"{\rm trc}"]\ar[r]&
K(R/p;\Q_p)\ar[d,"{\rm trc}"]
\\
\TC((R,P);\Q_p)\ar[r,leftarrow]&
\TC(R;\Q_p)\ar[r]&
\TC(R/p;\Q_p),
\end{tikzcd}
\end{equation}
where the left (resp.\ right) upper vertical equivalence is due to \cite[Example 3.6, Theorem 3.7]{Par3}, (resp.\ Proposition \ref{square.7}).
The use of homotopy $K$-theory in light of the $p$-adic deformation problem is in fact a rather natural thing to do, thanks to the following result:

\begin{thmalpha}[See Theorem \ref{prop:old-thm-A}]\label{thm:logbekkh} Let $(R, P)$ be a finite-dimensional Noetherian ${\mathcal O}_K^{\sharp}$-algebra which is regular, log regular, and vertical. If $R$ is Henselian along $(p)$, then there exists a natural cartesian square \[\begin{tikzcd}K(R[1/p] ; {\Q_p}) \ar{r} \ar{d} & {\rm KH}((R / p, P) ; {\Q_p}) \ar{d} \\ {\rm HC}^-((R, P) / {\mathcal O}_K^\sharp ; {\Bbb Q}_p) \ar{r} & {\rm HP}((R, P) / {\mathcal O}_K^\sharp ; {\Bbb Q}_p)\end{tikzcd}\] of spectra, where the top horizontal map is induced by the composition \[
K(R[1/p])\simeq \KH(R[1/p])\simeq \KH(R,P)\to \KH(R/p,P),
\] 
and the vertical maps are given by the traces.
\end{thmalpha}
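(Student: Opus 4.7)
The plan is to deduce this result from Theorem \ref{thm:logbek} (i.e., Theorem A) by changing the upper corners of its cartesian square using the logarithmic homotopy $K$-theory framework, while keeping the bottom row intact.

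First, I would construct the desired square as in the statement. Since both $R$ and $R[1/p]$ are regular Noetherian, Weibel's theorem yields $K(R[1/p]; \Q_p) \simeq \KH(R[1/p]; \Q_p)$. The equivalence $\KH(R[1/p]; \Q_p) \simeq \KH(R, P; \Q_p)$ is an instance of log descent in homotopy $K$-theory for vertical log structures, as developed in \cite{Par, Par2}: inverting $p$ trivializes $P$, and $\KH(R, P)$ recovers $\KH$ of the open complement of the log locus. Composing with the base change $\KH(R, P) \to \KH(R/p, P)$ gives the top horizontal arrow. The vertical maps are the log cyclotomic traces from \cite[Theorem F]{Par2}, followed by the canonical projections to $\HC^-$ and $\HP$; commutativity is enforced by the naturality encoded in diagram \eqref{eq:log-KH-trc}.

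Second, I would exhibit a natural morphism of squares from the cartesian square of Theorem \ref{thm:logbek} to the target square, which is the identity on the bottom row, the localization $K(R) \to K(R[1/p])$ on the upper-left, and the canonical map $K(R/p) \to \KH(R/p, P)$ on the upper-right. Since the bottom row is preserved, the target square is cartesian if and only if the auxiliary ``top-face'' square
\[
\begin{tikzcd}
K(R; \Q_p) \ar[r] \ar[d] & K(R/p; \Q_p) \ar[d]\\
K(R[1/p]; \Q_p) \ar[r] & \KH(R/p, P; \Q_p)
\end{tikzcd}
\]
is cartesian. To analyze it, I would note that the left column fits in Quillen's localization fiber sequence $G(R/p; \Q_p) \to K(R; \Q_p) \to K(R[1/p]; \Q_p)$ (using regularity of $R$). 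The right column has fiber $\fib(K(R/p; \Q_p) \to \KH(R/p, P; \Q_p))$ which, via a logarithmic Weibel theorem for log regular $(R/p, P)$, reduces to $\fib(K(R/p) \to K(R/p, P))$. The cartesian property then amounts to identifying this fiber with $G(R/p; \Q_p)$ under the natural comparison.

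The main obstacle will be this identification $\fib(K(R/p) \to K(R/p, P)) \simeq G(R/p)$ on the rationalized, $p$-complete level. It requires a fine analysis of the difference between log and non-log $K$-theory on log-regular but scheme-singular rings. My approach would be a devissage argument in the log cdh topology, leveraging the explicit local description of vertical log regular $\cO_K^\sharp$-algebras as semistable local models, together with the log $\KH$-descent results of \cite{Par, Par2}; if a direct fiber identification proves elusive, one can alternatively argue at the level of total fibers over $\HC^-/\HP$, bypassing the explicit computation of the right-hand fiber by exploiting the compatibility of the log cyclotomic trace with Theorem \ref{thm:logbek}.
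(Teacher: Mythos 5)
Your overall architecture is sound and in fact close to the paper's: the construction of the square, the identification $K(R[1/p];\Q_p)\simeq \KH(R[1/p];\Q_p)\simeq \KH((R,P);\Q_p)$, and the reduction (via a map of squares that is the identity on the bottom row) to the cartesianness of the ``top-face'' square are all correct and mirror the paper's pasting of \eqref{4.1} with a $\KH$-square. After the identifications $K(R;\Q_p)\simeq \KH(R;\Q_p)$ (regularity) and $K(R/p;\Q_p)\simeq \KH(R/p;\Q_p)$ (Proposition \ref{square.7}), your top-face square \emph{is} the square
\[
\begin{tikzcd}
\KH(R;\Q_p)\ar[d]\ar[r]& \KH(R/p;\Q_p)\ar[d]\\
\KH((R,P);\Q_p)\ar[r]& \KH((R/p,P);\Q_p),
\end{tikzcd}
\]
whose cartesianness is exactly the cited input \cite[Theorem 2.9]{Par3}. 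The paper simply invokes this; you instead try to prove it from scratch, and that is where the argument breaks down.

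Concretely, your proposed proof of the top-face square has two problems. First, you invoke ``a logarithmic Weibel theorem for log regular $(R/p,P)$'' to rewrite $\fib(K(R/p)\to \KH((R/p,P)))$ as $\fib(K(R/p)\to K(R/p,P))$, but $(R/p,P)$ is \emph{not} log regular: the special fiber of a semistable model (e.g.\ $k[x,y]/(xy)$) is not even normal, so the $\KH\simeq K$ comparison for regular log regular log schemes does not apply, and moreover no invariant ``$K(R/p,P)$'' of a log ring is defined in the framework used here (the whole point of passing to $\KH$ of log schemes is that genuine log $K$-theory with a trace is unavailable). Second, the desired identification of that fiber with $G(R/p;\Q_p)$ is precisely the nontrivial content of the cited $\KH$-descent theorem, not something that follows from Quillen localization plus devissage; and the fallback of ``arguing at the level of total fibers over $\HC^-/\HP$ by exploiting compatibility with Theorem \ref{thm:logbek}'' is circular, since the cartesianness of the target square over $\HC^-/\HP$ is what you are trying to establish. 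To close the gap you should replace this step by a direct appeal to \cite[Theorem 2.9]{Par3} (together with \cite[Example 3.6, Theorem 3.7]{Par3} for $\KH(R,P)\simeq\KH(R[1/p])$), after which your reduction completes the proof.
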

Using Theorem \ref{thm:logbekkh}, we can construct a meaningful variant of Theorem \ref{thm:hkchern} involving log $K$-theory of the special fiber $X_k$. See Theorem \ref{rig.5}. As a matter of fact, we can also recover Yamashita's theorem, which follows directly from the general $K$-theoretic statement. See Section \ref{sect:Yamashita}.

\subsection{A characteristic zero story}
Finally, as a complement to our main focus on the $p$-adic deformation problem, we conclude in Section \ref{sec.char0} with a discussion of the characteristic zero setting.  Assuming $K$ is an algebraic extension of $\mathbb{Q}$, we consider proper semistable schemes over $K[[t]]$ and establish a semistable generalization of a theorem of Morrow \cite[Theorem 1.1]{Mor14}. While Morrow characterizes the liftability of $K$-theory classes for smooth schemes using the flat filtration on standard de Rham cohomology, the presence of singularities in the semistable case necessitates a logarithmic approach. We prove that a class in the (homotopy) $K$-theory of the special fiber lifts to the infinitesimal neighborhood if and only if its trace lifts appropriately within the relative logarithmic de Rham cohomology. We view this result primarily as a supplementary application of our methods rather than a definitive parallel to Theorem \ref{thm:hkchern}. In the $p$-adic case, we identify the obstruction with the Hyodo-Kato Chern character relative to the logarithmic point $(k, \mathbb{N})$; however, in the characteristic zero context, we work relative to $K$ with the trivial log structure.

\subsection{Notation} Throughout, we assume some familiarity with the basics of log geometry; we use \cite{Ogu} as our main reference. We will also freely use variants of logarithmic ${\rm THH}$ arising from \cite{Rog}, and we refer to \cite[Sections 2 and 3]{BLPO2} for a streamlined overview.
Unless otherwise stated,
every (formal) scheme is assumed to be quasi-compact and quasi-separated,
and every formal scheme is a $p$-adic formal scheme. Most of the time, we will consider log schemes which are fine and saturated (fs for short). For a log scheme $X$, we usually write $\underline{X}$ to denote the underlying scheme. Recall that for a topology $\tau$ on the category of (formal) schemes,
the \emph{strict $\tau$-topology} on the category of fs log schemes is defined to be the topology generated by the families $\{f_i\colon X_i\to X\}_{i\in I}$ such that each $f_i$ is strict and $\{\ul{f_i}\colon \ul{X_i}\to \ul{X}\}_{i\in I}$ is a $\tau$-covering. More generally, see \cite[Section 3.1]{BPO} for a recollection of different relevant topologies on fs log schemes. 

\subsection{Acknowledgments} We wish to thank Ben Antieau for several interesting conversations around Theorem B, and Hélène Esnault for providing comments on an earlier version of this manuscript. We are also very grateful to  Georg Tamme for pointing out a gap in an earlier version of Proposition \ref{square.7}. 

\section{A logarithmic Bloch--Esnault--Kerz square}\label{sec:logbek}  We begin with some technical recollections:

\subsection{Vertical log structures} Let $\varphi \colon P \to Q$ be a map of commutative monoids. We say that $\varphi$ is \emph{vertical} \cite[Definition I.4.3.1(4)]{Ogu} if its cokernel $(Q / P)^{\rm int}$ in integral monoids is a group.
\begin{df}
    A morphism of fs log schemes $f\colon X\to S$ is called \emph{vertical} if the induced map $\cM_{S,f(x)}\to \cM_{X,x}$ is vertical for every point $x\in X$.
By \cite[Proposition I.4.3.3(6)]{Ogu}, if $f$ has a chart $P\to Q$ that is vertical, then $f$ is vertical too.
\end{df}
We shall use the following observation: If $({\Bbb Z}_p, \langle p \rangle) \to (R, P)$ is vertical, then the rationalized log scheme ${\rm Spec}(R \otimes_{{\Bbb Z}_p} {\Bbb Q}_p, P)$ carries the trivial log structure (otherwise, the resulting map to ${\rm Spec}({\Bbb Q}_p, \langle p \rangle) = {\rm Spec}({\Bbb Q}_p)$ would fail to be vertical).

\begin{exm}\label{exm:vertical} The morphism ${\Bbb N} \to {\Bbb N} \oplus {\Bbb N}, n \mapsto (n, n)$ is vertical, while the morphism ${\Bbb N} \to {\Bbb N} \oplus {\Bbb N},  n \mapsto (n, 0)$ is not. A classical example of a vertical morphism is given by the structure map $X\to \Spec(\Z_p, \langle p \rangle)$, where $X$ is a regular $\Z_p$ scheme with compactifying log structure given by the inclusion $X[1/p] \subset X$. 
\end{exm}

\subsection{Removing vertical log structures} We now take the first steps toward formulating variants of the fiber square \eqref{eq:introsquare3} that involve $K$-theory. Let us first observe that rationalized log ${\rm HC}$ vanishes on ${\mathbb F}_p$-algebras:

\begin{lem}\label{lem:hcq0} If $(R, P)$ is an ${\Bbb F}_p$-algebra, then ${\rm HC}((R, P) ; {\Bbb Q}_p) \simeq 0$.  
\end{lem}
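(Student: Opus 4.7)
The plan is to reduce the vanishing of $\mathrm{HC}((R, P); \mathbb{Q}_p)$ to the vanishing of $\mathrm{HH}((R, P); \mathbb{Q}_p)$ via the homotopy orbit presentation of Connes' cyclic homology. The key observation is that logarithmic Hochschild homology carries a natural $R$-linear structure coming from the unit map.

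First, I would use the unit map $R \to \mathrm{HH}((R, P))$ (cf.\ the recollections of \cite{Rog} in \cite[Sections 2-3]{BLPO2}) to equip $\mathrm{HH}((R,P))$ with the structure of a module spectrum over the Eilenberg--MacLane spectrum of $R$. Since $R$ is an $\mathbb{F}_p$-algebra by assumption, multiplication by $p$ acts as zero on $R$, hence on $\mathrm{HH}((R, P))$. Consequently every homotopy group of $\mathrm{HH}((R, P))$ is $p$-torsion, so inverting $p$ (or first $p$-completing and then inverting $p$) produces the zero spectrum, i.e., $\mathrm{HH}((R, P); \mathbb{Q}_p) \simeq 0$.

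Second, I would invoke the homotopy orbit description $\mathrm{HC}((R, P)) \simeq \mathrm{HH}((R, P))_{hS^1}$, which is part of the very definition in the logarithmic setting. Since rationalization is a filtered colimit of self-maps and therefore commutes with homotopy colimits, we obtain
\[
\mathrm{HC}((R, P); \mathbb{Q}_p) \simeq \bigl(\mathrm{HH}((R, P); \mathbb{Q}_p)\bigr)_{hS^1} \simeq 0.
\]

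I do not anticipate any real obstacle: the lemma is a formal consequence of the $R$-linearity of Rognes' log Hochschild homology combined with the exactness of rationalization. The only point worth being slightly careful about is confirming that the convention for $(-;\mathbb{Q}_p)$ (be it rationalization or $p$-completed rationalization) is one that vanishes on $p$-torsion spectra, which holds in either case.
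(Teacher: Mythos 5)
Your proof is correct, but it takes a genuinely different route from the paper's. The paper deduces the lemma in one line from the cartesian square \eqref{eq:introsquare3} (i.e.\ \cite[Theorem 1.5]{BLMP2}): since $R/p\cong R$, the top horizontal map $\TC((R,P);\Q_p)\to\TC((R/p,P);\Q_p)$ is an equivalence, hence so is $\HC^-((R,P);\Q_p)\to\HP((R,P);\Q_p)$, whose cofiber is $\Sigma^2\HC((R,P);\Q_p)$ by the norm sequence. Your argument instead goes through the $R$-module structure on $\HH((R,P))$ supplied by the unit, so that $p\cdot\mathrm{id}$ is null, all homotopy groups of $\HH((R,P))$ and of its homotopy orbits are bounded $p$-power torsion, and the rationalization (with or without a prior $p$-completion) vanishes. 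This is more elementary and self-contained: it makes no use of the nontrivial input \eqref{eq:introsquare3}, and it in fact proves the stronger statement $\HH((R,P);\Q_p)\simeq 0$, whereas the paper's argument leans on a result whose proof is substantial but which is in any case needed elsewhere in the paper. One small caveat, which you already flag: under the paper's convention $(-;\Q_p)$ means $p$-completion followed by rationalization, so the displayed commutation of $(-;\Q_p)$ past $(-)_{hS^1}$ is not purely formal (only the rationalization commutes with colimits); the clean way to phrase it is that $p\cdot\mathrm{id}\simeq 0$ on $\HH((R,P))$ passes to the homotopy orbits, so $\HC((R,P))$ is itself $p$-complete with torsion homotopy groups, and both sides are zero. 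With that phrasing the argument is airtight.
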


\begin{proof} By assumption, $R / p \cong R$. Thus the upper horizontal map in the (co)cartesian square \eqref{eq:introsquare3} is an equivalence, and hence so is its lower horizontal map. 
\end{proof}

There is a notion of \emph{log regularity} introduced by Kato \cite{zbMATH00706681}; see \cite[Theorem III.1.11.1]{Ogu} for useful characterizations. 

\begin{prop}\label{prop:tccartnolog} Let $(R, P)$ be a regular log regular $\Z_p$-algebra.
Assume that $\Spec(R[1/p],P)$ has the trivial log structure.
Then the induced square of spectra
\[
\begin{tikzcd}
\TC(R;\Q_p)\ar[d]\ar[r]&
\TC(R/p;\Q_p)\ar[d]
\\
\TC((R,P);\Q_p)\ar[r]&
\TC((R/p,P);\Q_p)
\end{tikzcd}
\]
is cartesian.
\end{prop}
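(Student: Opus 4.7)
My plan is to identify both horizontal fibers of the desired square with (logarithmic) cyclic homology via the cartesian square~\eqref{eq:introsquare3}, and then to verify the natural comparison between them via the residue sequences of Rognes-Sagave-Schlichtkrull. Concretely, applying \cite[Theorem 1.5]{BLMP2} both to $(R,P)$ and to $R$ equipped with the trivial log structure (in which case log $\THH$ coincides with ordinary $\THH$) yields natural identifications
\[
\fib\bigl(\TC(R;\Q_p) \to \TC(R/p;\Q_p)\bigr) \simeq \fib\bigl(\HC^-(R;\Q_p) \to \HP(R;\Q_p)\bigr)
\]
and
\[
\fib\bigl(\TC((R,P);\Q_p) \to \TC((R/p,P);\Q_p)\bigr) \simeq \fib\bigl(\HC^-((R,P);\Q_p) \to \HP((R,P);\Q_p)\bigr).
\]
Thus the proposition reduces to showing that the natural map forgetting the log structure,
\[
\fib\bigl(\HC^-(R;\Q_p) \to \HP(R;\Q_p)\bigr) \longrightarrow \fib\bigl(\HC^-((R,P);\Q_p) \to \HP((R,P);\Q_p)\bigr),
\]
is an equivalence.

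To establish this equivalence, I would apply the residue sequences of Rognes-Sagave-Schlichtkrull \cite{RSS15} to peel off the log structure $P$ one generator at a time. Since $(R,P)$ is log regular, each residue sequence presents the $\THH$ of an intermediate log ring as a fiber sequence whose cofiber term is an $S^1$-equivariant $\THH$ of a stratum of the log divisor. The verticality of $P$ over $\langle p \rangle$, together with the triviality of $P$ on $R[1/p]$, forces each stratum to be supported in $V(p)$, hence to be (a log ring on) an $\F_p$-algebra. Taking $S^1$-homotopy fixed points and Tate constructions produces a map of fiber sequences whose rightmost entry is the map $\HC^-(\text{stratum};\Q_p) \to \HP(\text{stratum};\Q_p)$; by Lemma~\ref{lem:hcq0} this map is an equivalence. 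Consequently, passing to vertical fibers gives the desired equivalence between the left two terms, which after assembling across the induction yields the claimed identification of the log and non-log $\HC^- \to \HP$ fibers.

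The main obstacle will be the iterated book-keeping required when $P^{\rm gp}$ has rank greater than one: at each step one must verify that the intermediate log ring is still log regular and that the newly created stratum is still supported in $V(p)$, so that Lemma~\ref{lem:hcq0} applies. The verticality hypothesis is precisely what guarantees that the strata remain $\F_p$-algebras throughout the induction, while log regularity is what keeps the RSS residue sequences well-behaved at each stage.
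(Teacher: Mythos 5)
Your proposal is correct and follows essentially the same route as the paper: the paper also reduces the claim, via the cartesian squares \eqref{eq:introsquare3} and its non-logarithmic analogue, to showing that $\HC(R;\Q_p)\to\HC((R,P);\Q_p)$ is an equivalence (it phrases this as a commutative cube whose top, bottom, and back faces are cartesian), and then runs the same induction using Kato's structure theorem for log regular rings to reduce to a chart $P=\N^d$, the Rognes--Sagave--Schlichtkrull residue sequence, and Lemma \ref{lem:hcq0} applied to the strata, which are $\F_p$-algebras by verticality.
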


\begin{proof} Consider the commutative cube \[\begin{tikzcd}[row sep = tiny, column sep=small]
&
{\rm HC}^-(R ; {\Bbb Q}_p)
\ar{rr}{}
\ar[]{dd}[near end]{}
& & {\rm HP}(R ; {\Bbb Q}_p)
\ar{dd}{}
\\
{\rm TC}(R ; {\Bbb Q}_p)
\ar[crossing over]{rr}[near start]{}
\ar{dd}[swap]{}
\ar{ur}
& & {\rm TC}(R / p ; {\Bbb Q}_p)
\ar{ur}
\\
&
{\rm HC}^-((R, P) ; {\Bbb Q}_p)
\ar[near start]{rr}{}
& & {\rm HP}((R, P) ; {\Bbb Q}_p)
\\
{\rm TC}((R, P) ; {\Bbb Q}_p)
\ar{ur}
\ar{rr}
& & {\rm TC}((R / p, P) ; {\Bbb Q}_p).
\ar[crossing over, leftarrow, near start]{uu}{}
\ar{ur}
\end{tikzcd}\]  The top face is cartesian by Antieau--Mathew--Morrow--Nikolaus \cite[Corollary 3.9]{AMMN}, while the bottom face is the cartesian square \eqref{eq:introsquare3}. It thus suffices to prove that the back face is cartesian: We will show that the canonical map \begin{equation}\label{eq:hctologhc}{\rm HC}(R ; {\Bbb Q}_p) \to {\rm HC}((R, P) ; {\Bbb Q}_p)\end{equation} is an equivalence. We work Zariski locally on $R$. Since $R$ is regular and $(R,P)$ is log regular, we may assume that $P=\N^d$ for some integer $d\geq 0$ \cite[Theorem III.1.11.6]{Ogu}. We proceed to prove that \eqref{eq:hctologhc} is an equivalence by induction on $d$, the case $d = 0$ being clear. We now apply the residue sequence of Rognes--Sagave--Schlichtkrull \cite[Theorem 5.5, Example 5.7]{RSS15} to infer a fiber sequence of spectra \[
\HC((R/\alpha(x),\N^{d-1});\Q_p)
\to
\HC((R,\N^{d-1});\Q_p)
\to
\HC((R,\N^d);\Q_p),
\]
where $\alpha\colon \N^d\to R$ denotes the log structure map and $x = (1, 0, \dots, 0)$. Since $({\Bbb Z}_p, \langle p \rangle) \to (R, P)$ is vertical, $R / \alpha(x)$ is an ${\Bbb F}_p$-algebra (cf.\ Example \ref{exm:vertical}). Thus Lemma \ref{lem:hcq0} applies to conclude that the first term of the fiber sequence vanishes, so that its second map is an equivalence. However, the map \eqref{eq:hctologhc} is an equivalence for $P = {\Bbb N}^{d - 1}$ by induction hypothesis, concluding the proof. 
\end{proof}

\subsection{From absolute to relative}
Suppose now that $K$ is a complete discrete valuation field of mixed characteristic $(0, p)$ with ring of integers ${\mathcal O}_K$ and perfect residue field $k$. We shall write ${\mathcal O}_K^\sharp$ for the log ring $({\mathcal O}_K, {\mathcal O}_K  - \{0\})$; this is the log ring associated to the pre-log ring $({\mathcal O}_K, \langle \pi \rangle)$ for any chosen uniformizer $\pi$. 

\begin{prop}\label{prop:nodvrbase} For any ${\mathcal O}_K^\sharp$-algebra $(R, P)$, the canonical map \[{\rm HC}((R, P) ; {\Bbb Q}_p) \xrightarrow{} {\rm HC}((R, P) / {\mathcal O}_K^\sharp ; {\Bbb Q}_p)\] is an equivalence.
\end{prop}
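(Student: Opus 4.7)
The plan is to reduce the claim to a computation of absolute log Hochschild homology of the base $\cO_K^\sharp$ via a base-change formula.

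Since $\HC$ is obtained from $\HH$ (or, rationally, $\THH$) by taking $S^1$-homotopy orbits, and this operation preserves equivalences, it suffices to prove the analogous claim for $\HH$: that the canonical map $\HH((R,P);\Q_p) \to \HH((R,P)/\cO_K^\sharp;\Q_p)$ is an equivalence. By the base-change formula for log $\HH$, the right-hand side may be identified with $\HH((R,P);\Q_p) \otimes_{\HH(\cO_K^\sharp;\Q_p)} (\cO_K \otimes \Q_p)$, so the fiber of the comparison map is the tensor product over $\HH(\cO_K^\sharp;\Q_p)$ with $\fib(\HH(\cO_K^\sharp;\Q_p) \to \cO_K \otimes \Q_p)$. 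It therefore suffices to show that $\HH(\cO_K^\sharp;\Q_p) \simeq K := \cO_K \otimes \Q_p$, i.e.\ that the augmentation of the base is a rational $p$-adic equivalence.

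To this end, we apply the residue sequence of Rognes--Sagave--Schlichtkrull to the chart $\N \to \cO_K$, $1 \mapsto \pi$, yielding a fiber sequence
\[
\HH(k;\Q_p) \to \HH(\cO_K;\Q_p) \to \HH(\cO_K^\sharp;\Q_p).
\]
Since $k = \cO_K / \pi$ is an $\F_p$-algebra, the $\HH$-analogue of \Cref{lem:hcq0} shows that the first term vanishes, giving $\HH(\cO_K^\sharp;\Q_p) \simeq \HH(\cO_K;\Q_p)$. Finally, the $p$-adic étaleness of $\Z \to \Z_p$ gives $\HH(\cO_K/\Z;\Q_p) \simeq \HH(\cO_K/\Z_p;\Q_p)$, and the Hochschild--Kostant--Rosenberg theorem combined with the torsion of $\Omega^n_{\cO_K/\Z_p}$ (e.g.\ the different when $K$ is finite over $\Q_p$, with a similar torsion/pro-\'etale analysis in the general case) implies that only $\pi_0 \cong \cO_K$ survives rationally, giving $\HH(\cO_K;\Q_p) \simeq K$ as desired.

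The main obstacle will be formulating the base-change formula for log Hochschild homology in sufficient generality, which relies on the symmetric monoidal framework for log ring spectra developed in the authors' earlier work; once this is in place, the residue sequence and \Cref{lem:hcq0} do the remaining work. A secondary subtlety is ensuring that the computation $\HH(\cO_K;\Q_p) \simeq K$ holds uniformly for $K$ a general complete discretely valued field (not just a finite extension of $\Q_p$), which is handled by writing $\cO_K$ as the $p$-adic completion of a filtered colimit of ind-\'etale and ind-ramified extensions of $\Z_p$.
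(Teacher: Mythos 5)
Your strategy is sound and lands on the same essential computation as the paper, but it is routed through one input that you correctly flag as the ``main obstacle'' and never actually supply: the base-change formula $\HH((R,P)/\cO_K^\sharp)\simeq \HH(R,P)\otimes_{\HH(\cO_K^\sharp)}\cO_K$ for logarithmic Hochschild homology. The paper's own proof is engineered precisely to avoid this. Instead of comparing absolute and relative $\HH$ directly, it descends through the log HKR filtration of \cite{BLPO} to the log cotangent complex, where the analogue of your base-change step is simply the transitivity triangle $\L_{\cO_K^\sharp/\Z}\otimes_{\cO_K} R\to \L_{(R,P)/\Z}\to \L_{(R,P)/\cO_K^\sharp}$; one is then reduced to showing $(\L_{\cO_K^\sharp/\Z})^\wedge_p\otimes_{\Z_p}\Q_p\simeq 0$, which is exactly the statement your computation $\HH(\cO_K^\sharp;\Q_p)\simeq K$ encodes on associated graded pieces. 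Your route to that computation is a pleasant variant: the paper deduces $\fib(\L_{\cO_K/\Z}\to\L_{\cO_K^\sharp/\Z})\simeq \fib(\cO_K\xrightarrow{\pi}\cO_K)\simeq k[-1]$ from \cite[(3.5)]{BLPO} and then rationalizes, whereas you apply the residue sequence of \cite{RSS15} to the chart $\N\to\cO_K$ and kill the $\HH(k;\Q_p)$ term by (the $\HH$-analogue of) Lemma \ref{lem:hcq0}; these are the same mechanism at different levels of the filtration, and the remaining input $(\L_{\cO_K/\Z})^\wedge_p\otimes_{\Z_p}\Q_p\simeq 0$ is the quasi-isogeny observation from \cite[Proof of Proposition 4.10]{AMMN} in both arguments.

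So the one genuine gap is the base-change formula itself: it is plausible and presumably provable from the symmetric monoidal description of log $\HH$ via the replete bar construction, but it is not stated in the references the paper relies on, and in the $p$-complete rational setting you would additionally need to justify commuting the relative tensor product (a bar-construction colimit) past $p$-completion and rationalization, just as one must justify commuting $S^1$-orbits past $p$-completion in the reduction from $\HC$ to $\HH$ (the paper handles the latter by citing \cite[Proof of Proposition 4.10]{AMMN}). If you want to keep your architecture, the cleanest fix is to run the base-change argument on the associated graded of the HKR filtration, i.e.\ on exterior powers of the log cotangent complex -- at which point you have reproduced the paper's proof.
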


\begin{proof} It is observed in \cite[Proof of Proposition 4.10]{AMMN} that ${\Bbb L}_{{\mathcal O}_K / W(k)}$ is quasi-isogenous to $0$. The assumption that $k$ is perfect gives ${\Bbb L}_{k / {\Bbb F}_p} \simeq 0$, which in turn implies that $({\Bbb L}_{W(k) / {\Bbb Z}})^\wedge_p \simeq 0$ (as this can be checked after reducing mod $p$). 
By \cite[(3.5)]{BLPO},
we have an equivalence
\[
\fib(\cO_K\xrightarrow{\pi} \cO_K)
\simeq
\fib(\L_{\cO_K/\Z}\to \L_{\cO_K^\sharp/\Z}).
\]
Combining these observations with the transitivity sequence \[{\mathcal O}_K \otimes_{W(k)} {\Bbb L}_{W(k) / {\Bbb Z}} \xrightarrow{} {\Bbb L}_{{\mathcal O}_K / {\Bbb Z}} \xrightarrow{} {\Bbb L}_{{\mathcal O}_K / W(k)},\] we conclude that $({\Bbb L}_{{\mathcal O}_K^\sharp / {\Bbb Z}})^\wedge_p \otimes_{{\Bbb Z}_p} {\Bbb Q}_p \simeq 0$. Hence the transitivity sequence \[{\Bbb L}_{{\mathcal O}_K^\sharp / {\Bbb Z}} \xrightarrow{} {\Bbb L}_{(R, P) / {\Bbb Z}} \xrightarrow{} {\Bbb L}_{(R, P) / {\mathcal O}_K^\sharp}\] implies that $({\Bbb L}_{(R, P) / {\Bbb Z}})^\wedge_p \otimes_{{\Bbb Z}_p} {\Bbb Q}_p \xrightarrow{} ({\Bbb L}_{(R, P) / {\mathcal O}_K^\sharp})^\wedge_p \otimes_{{\Bbb Z}_p} {\Bbb Q}_p$ is an equivalence, which implies that ${\rm HH}((R, P) ; {\Bbb Q}_p) \to {\rm HH}((R, P) / {\mathcal O}_K^\sharp ; {\Bbb Q}_p)$ by the log ${\rm HKR}$-filtration \cite[Theorem 1.1]{BLPO}. Commuting the $S^1$-coinvariants with $p$-completions as in \cite[Proof of Proposition 4.10]{AMMN}, we conclude. 
\end{proof}

\subsection{Involving \texorpdfstring{$K$}{K}-theory} Finally, we arrive at a logarithmic variant of the square \eqref{eq:introsquare2} which involves $K$-theory: 

\begin{thm}\label{thm:beilfibernologktheory} Let $(R, P)$ be an ${\mathcal O}_K^\sharp$-algebra which is vertical, regular, and log regular. If $R$ is Henselian along $(p)$, then there exists a natural cartesian square \begin{equation}
\label{square.4.1}
\begin{tikzcd}
K(R;\Q_p)\ar[d]\ar[r]&
K(R/p;\Q_p)\ar[d]
\\
\HC^-((R,P)/\cO_K^\sharp;\Q_p)\ar[r]&
\HP((R,P)/\cO_K^\sharp;\Q_p)
\end{tikzcd}
\end{equation}
of spectra.
\end{thm}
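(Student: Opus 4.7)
The plan is to assemble the square \eqref{square.4.1} by vertically pasting three cartesian squares, and then to pass from absolute to relative log invariants over $\cO_K^\sharp$.

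First, since $R$ is Henselian along $(p)$, the rationalized cyclotomic trace of \cite{AMMN} (the non-logarithmic input that underlies square \eqref{eq:introsquare1}) provides a cartesian square
\[
\begin{tikzcd}
K(R;\Q_p)\ar[d]\ar[r]&
K(R/p;\Q_p)\ar[d]
\\
\TC(R;\Q_p)\ar[r]&
\TC(R/p;\Q_p).
\end{tikzcd}
\]
Next, Proposition~\ref{prop:tccartnolog} applies to $(R,P)$: the verticality of $(\Z_p,\langle p\rangle)\to (R,P)$ ensures that $\Spec(R[1/p],P)$ carries the trivial log structure, and the regularity and log regularity hypotheses hold by assumption. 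This yields a cartesian square with top row $\TC(R;\Q_p)\to \TC(R/p;\Q_p)$ and bottom row $\TC((R,P);\Q_p)\to \TC((R/p,P);\Q_p)$. Third, the logarithmic fiber square \eqref{eq:introsquare3} of \cite[Theorem 1.5]{BLMP2} is a cartesian square with the latter row as its top and $\HC^-((R,P);\Q_p)\to \HP((R,P);\Q_p)$ as its bottom. Pasting these three cartesian squares vertically via the pasting lemma for pullbacks produces a cartesian rectangle
\[
\begin{tikzcd}
K(R;\Q_p)\ar[d]\ar[r]&
K(R/p;\Q_p)\ar[d]
\\
\HC^-((R,P);\Q_p)\ar[r]&
\HP((R,P);\Q_p).
\end{tikzcd}
\]

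It remains to replace the absolute log invariants with their relative counterparts over $\cO_K^\sharp$. For this I would observe that the proof of Proposition~\ref{prop:nodvrbase} in fact establishes the stronger equivalence $\HH((R,P);\Q_p)\xrightarrow{\simeq} \HH((R,P)/\cO_K^\sharp;\Q_p)$, from which the corresponding equivalences for $\HC^-$ and $\HP$ follow since the $S^1$-homotopy fixed points and the $S^1$-Tate construction both commute with the $p$-completions at play (as used in \cite[Proof of Proposition 4.10]{AMMN}). Substituting these equivalences into the bottom row of the pasted rectangle yields precisely \eqref{square.4.1}.

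The substantive technical content has already been settled in Propositions~\ref{prop:tccartnolog} and~\ref{prop:nodvrbase}; the remaining argument is essentially diagrammatic. The only point requiring care---and the mildest obstacle I anticipate---is to confirm that the identifications along the matched rows of the three pasted squares are induced by the same cyclotomic trace and natural comparison morphisms, so that the resulting cartesian rectangle coincides with the candidate for \eqref{square.4.1}. This compatibility is transparent from the respective constructions, and no essential new difficulty is expected beyond it.
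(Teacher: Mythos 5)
Your proposal is correct and follows essentially the same route as the paper: the paper's proof is exactly the vertical pasting of the $K/\TC$ square for the Henselian pair $(R,(p))$ (cited there to Clausen--Mathew--Morrow rather than to \cite{AMMN}), the $\TC$ square of Proposition~\ref{prop:tccartnolog}, the logarithmic $\TC/\HC^-/\HP$ square \eqref{eq:introsquare3}, and the absolute-to-relative comparison of Proposition~\ref{prop:nodvrbase}. Your substitution of the bottom row via the $\HH$-level equivalence is just the degenerate case of the paper's fourth cartesian square (both vertical maps being equivalences), so there is no substantive difference.
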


\begin{proof} We claim that each square in the diagram \begin{equation}
\label{4.1}
\begin{tikzcd}[row sep = small]
K(R;\Q_p)\ar[d]\ar[r]&
K(R/p;\Q_p)\ar[d]
\\
\TC(R;\Q_p)\ar[d]\ar[r]&
\TC(R/p;\Q_p)\ar[d]
\\
\TC((R,P);\Q_p)\ar[r]\ar[d]&
\TC((R/p,P);\Q_p)\ar[d]&
\\
\HC^-((R,P);\Q_p)\ar[r]\ar[d]&
\HP((R,P);\Q_p)\ar[d]
\\
\HC^-((R,P)/\cO_K^\sharp;\Q_p)\ar[r]&
\HP((R,P)/\cO_K^\sharp;\Q_p)
\end{tikzcd}
\end{equation}
is cartesian.
For the top square, this is \cite[Theorem 4.36]{CMM}.
For the second square from the top, this is Proposition \ref{prop:tccartnolog}. The third square from the top is \eqref{eq:introsquare3}, while the bottom square is a consequence of Proposition \ref{prop:nodvrbase}. 
\end{proof}

For any scheme $X$ over $\cO_K$,
we will use the continuous K-theory (after Beilinson \cite{Bei2})
\[
K^\cts(X)
:=
\lim_mK(X/p^m),
\;
K^\cts(X;\Z_p)
:=
\lim_m K(X/p^m;\Z_p).
\]
We set $K^\cts(X;\Q):=K^\cts(X)\otimes \Q$ and $K^\cts(X;\Q_p):=K^\cts(X;\Z_p)\otimes \Q$.
We will use similar notations for any formal scheme over $\cO_K$.

\begin{cor}
\label{cor:beilfibernologktheory}
Let $\cY$ be a vertical regular log regular formal log scheme over $\cO_K^\sharp$.
Then there exists a natural cartesian square
\[
\begin{tikzcd}
K^\cts(\ul{\cY};\Q)\ar[d]\ar[r]&
K(\ul{\cY}_k;\Q)\ar[d]
\\
\HC^-(\cY/\cO_K^\sharp;\Q_p)\ar[r]&
\HP(\cY/\cO_K^\sharp;\Q_p)
\end{tikzcd}
\]
of spectra,
where $\cY_k$ denotes the special fiber of $\cY$.
\end{cor}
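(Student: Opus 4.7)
The plan is to deduce Corollary \ref{cor:beilfibernologktheory} from Theorem \ref{thm:beilfibernologktheory} by strict Zariski descent, combined with a $p$-completion argument to pass from the ``Henselian along $(p)$'' affine setting to the formal-scheme setting.

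First I would reduce to the affine case $\cY = \Spf(R, P)$. Each of the four functors appearing in the square is a strict Zariski sheaf on $\cY$: for the logarithmic negative and periodic cyclic homology terms this follows from the strict Zariski descent of Rognes' logarithmic $\THH$, and for the $K$-theoretic terms it is standard Nisnevich descent for $K$-theory combined with the defining inverse limit for $K^\cts$. Since the hypotheses of verticality, regularity, and log regularity restrict to affine opens, we reduce to the case where $(R, P)$ is an $\cO_K^\sharp$-algebra satisfying the hypotheses of Theorem \ref{thm:beilfibernologktheory}.

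For such an affine chart, $R$ is $p$-adically complete and hence Henselian along $(p)$, so Theorem \ref{thm:beilfibernologktheory} produces the cartesian square \eqref{square.4.1} with $\Q_p$-coefficients. It remains to match its two top terms with those of the corollary. Since all spectra appearing in the square are $p$-complete after $\Z_p$-coefficients, the equivalence $X \otimes \Q \simeq X \otimes \Q_p$ for $p$-complete $X$ allows passage between the two rationalizations. The identification $K(R; \Q_p) \simeq K^\cts(\Spf R; \Q)$ is obtained by applying the CMM square \cite[Theorem 4.36]{CMM} to the tower $\{R/p^m\}$, together with the fact that $\TC(R; \Z_p) \simeq \lim_m \TC(R/p^m; \Z_p)$ by the standard limit commutation properties of $\TC$ with respect to $p$-adic completion. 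The identification $K(R/p; \Q_p) \simeq K(\ul\cY_k; \Q)$ uses that $\ul\cY_k = \Spec(R/\pi R)$, and that the surjection $R/p \twoheadrightarrow R/\pi$ has nilpotent kernel (since $(\pi)^e = (p)$ in $\cO_K$); a further application of the CMM square together with the nil-invariance of $\TC(-; \Z_p)$ then gives $K(R/p; \Z_p) \simeq K(R/\pi; \Z_p)$, which rationally yields the claim.

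The main obstacle I anticipate is the compatibility of the identification $K(R; \Q_p) \simeq K^\cts(\Spf R; \Q)$ with the trace maps into logarithmic negative cyclic homology, so that cartesian-ness of \eqref{square.4.1} is actually preserved after the substitutions. The strategy is to compare directly at the level of $\TC$: both $K(R; \Q_p)$ and $K^\cts(\Spf R; \Q)$ are characterized, via CMM, by the same cartesian square over a common bottom row built from $\TC$, so the identification is automatically trace-compatible and the cartesian property of the square in the corollary follows by pasting.
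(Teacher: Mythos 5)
Your reduction to the affine case and your use of continuity to identify $K(R;\Z_p)\simeq\lim_m K(R/p^m;\Z_p)$ are consistent with what the paper's proof uses implicitly, but the central step of the corollary is missing. You assert that since the spectra are ``$p$-complete after $\Z_p$-coefficients'' one may pass freely between the two rationalizations, and accordingly you write identifications such as $K(R;\Q_p)\simeq K^\cts(\Spf R;\Q)$ and $K(R/p;\Q_p)\simeq K(\ul{\cY}_k;\Q)$. These are false: $K^\cts(\ul{\cY})=\lim_m K(\ul{\cY}/p^m)$ and $K(\ul{\cY}_k)$ are \emph{not} $p$-complete, and $K(\ul{\cY}_k;\Q)=K(\ul{\cY}_k)\otimes\Q$ genuinely differs from $K(\ul{\cY}_k;\Q_p)=K(\ul{\cY}_k;\Z_p)\otimes\Q$ --- already for $\cY_k=\Spec(k)$ the former has $\pi_0=\Q$ and the latter $\pi_0=\Q_p$. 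Theorem \ref{thm:beilfibernologktheory} only yields the $\Q_p$-coefficient square; to get the $\Q$-coefficient square of the corollary one must paste on top of it the square
\[
\begin{tikzcd}
K^\cts(\ul{\cY};\Q)\ar[d]\ar[r]&
K(\ul{\cY}_k;\Q)\ar[d]
\\
K^\cts(\ul{\cY};\Q_p)\ar[r]&
K(\ul{\cY}_k;\Q_p)
\end{tikzcd}
\]
whose cartesianness is a nontrivial external input (\cite[Corollary 3.8]{AMMN}, or Geisser--Hesselholt). This pasting is essentially the entire content of the corollary beyond the theorem, and your argument does not supply a substitute for it.

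A second, related error: you justify $K(R/p;\Z_p)\simeq K(R/\pi;\Z_p)$ by ``nil-invariance of $\TC(-;\Z_p)$''. But $\TC$ is not nil-invariant --- Dundas--Goodwillie--McCarthy is precisely the statement that $K$ and $\TC$ have the \emph{same} relative term for a nilpotent ideal, not that this term vanishes --- and indeed $K(R/p;\Z_p)\not\simeq K(R/\pi;\Z_p)$ in general when the ramification index exceeds $1$ (the relative $K$-groups of truncated polynomial algebras over perfect fields are famously nonzero). The comparison between the mod-$p$ and mod-$\pi$ fibers only holds after the correct rational bookkeeping (Goodwillie's theorem together with the rational vanishing of relative cyclic homology of nilpotent ideals in $\F_p$-algebras), and in the paper it is absorbed into the cited cartesian square above. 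Your closing observation that the two CMM squares share a common $\TC$-row, which makes the continuity identification trace-compatible, is sound and matches how the paper's diagram \eqref{4.1} is organized; the gaps are the two points above.
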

\begin{proof}
The $\Q_p$-coefficient version is a direct consequence of Theorem \ref{thm:beilfibernologktheory}.
To obtain the $\Q$-coefficient version,
note that the induced square of spectra
\[
\begin{tikzcd}
K^\cts(\ul{\cY};\Q)\ar[d]\ar[r]&
K(\ul{\cY}_k;\Q)\ar[d]
\\
K^\cts(\ul{\cY};\Q_p)\ar[r]&
K(\ul{\cY}_k;\Q_p)
\end{tikzcd}
\]
is cartesian by \cite[Corollary 3.8]{AMMN} (or by Geisser--Hesselholt \cite{GH}).
\end{proof}

While Theorem \ref{thm:beilfibernologktheory} gives a logarithmic variant of the fiber square \eqref{eq:introsquare2}, the $K$-theory terms do not yet interact with the log structure. To overcome this, we shall replace $K$-theory with ${\rm KH}$, and use log cyclotomic traces. The passage from $K$ to $\KH$ in this context is rather harmless, in light of the following:
 
\begin{prop}
\label{square.7}
Let $A$ be a Noetherian $\F_p$-algebra,
and let $\mathfrak{p}_1,\ldots,\mathfrak{p}_m$ be the minimal prime ideals of $A$.
Assume that for every nonempty subset $I:=\{i_1,\ldots,i_r\}$ of $\{1,\ldots,m\}$,
$A_I:=A_{i_1}\otimes_A \cdots \otimes_A A_{i_r}$ is regular,
where $A_i:=A/\mathfrak{p}_i$ for each $i$.
Then the natural map of spectra
\[
K(A;\Q_p)
\to
\KH(A;\Q_p)
\]
is an equivalence.
\end{prop}


\begin{proof}
By assumption, the rings $A_i$ form  a  cdh cover, and $A\to A_\bullet$ is the associate cube. 
By \cite[Corollary A.4]{LT},
we have 
a cartesian square of spectra
\[
\begin{tikzcd}
K(A)\ar[d]\ar[r]&
\TC(A)\ar[d]
\\
\lim_{I\subset \{1,\ldots,m\},I\neq \emptyset}
K(A_I)\ar[r]&
\lim_{I\subset \{1,\ldots,m\},I\neq \emptyset}
\TC(A_I).
\end{tikzcd}
\]
Noting that the limit is finite, this yields a cartesian square of spectra
\[
\begin{tikzcd}
K(A;\Q_p)\ar[d]\ar[r]&
\TC(A;\Q_p)\ar[d]
\\
\lim_{I\subset \{1,\ldots,m\},I\neq \emptyset}
K(A_I;\Q_p)\ar[r]&
\lim_{I\subset \{1,\ldots,m\},I\neq \emptyset}
\TC(A_I;\Q_p).
\end{tikzcd}
\]
We have $K(A_I;\Q_p)\simeq \KH(A_I;\Q_p)$ for each $I$, and $\KH$ satisfies cdh descent.
Hence it suffices to show
\[
\TC(A;\Q_p)\simeq \lim_{I\subset \{1,\ldots,m\},I\neq \emptyset}
\TC(A_I;\Q_p).
\]

Since $\THH(A)$ is an $A$-algebra,
the rationalizations of $\THH(A)$ and $\THH(A;\Z_p)$ are zero.
Together with \cite[(3.8)]{EM23},
we have $\TC(A)\simeq \TC(A;\Z_p)$.
Hence we have $\TC(A;\Q)\simeq \TC(A;\Q_p)$.
Similarly, $\TC(A_I;\Q)\simeq \TC(A_I;\Q_p)$ for each $I$.
To conclude,
note that $\TC(-;\Q)$ is a cdh sheaf of spectra on the opposite category of $\F_p$-algebras by \cite[Proof of Corollary 4.20]{EM23}.
\end{proof}


For a finite-dimensional Noetherian fs log scheme $X$,
the homotopy K-theory $\KH(X)$ is defined in \cite[Definition 4.4.2]{Par}.
This extends Weibel's homotopy K-theory:
By \cite[Example 3.6, Theorem 3.7]{Par3},
if $X$ is regular and log regular,
then we have a natural equivalence
$
\KH(X)
\simeq
\KH(X-\partial X).
$ 

We can now complete the proof of Theorem \ref{thm:logbekkh}, which we recall:
\begin{thm}\label{prop:old-thm-A}
 The composition \[
K(R;\Q_p)\to K(R/p;\Q_p)\to \TC(R/p;\Q_p)\to \TC((R/p,P);\Q_p)\] 
factors through the log cyclotomic trace $\KH((R/p,P);\Q_p)\to \TC((R/p,P);\Q_p)$, inducing a cartesian square of spectra:
    \[\begin{tikzcd}K(R[1/p] ; {\Bbb Q}_p) \ar{r} \ar{d} & {\rm KH}((R / p, P) ; {\Bbb Q}_p) \ar{d} \\ {\rm HC}^-((R, P) / {\mathcal O}_K^\sharp ; {\Bbb Q}_p) \ar{r} & {\rm HP}((R, P) / {\mathcal O}_K^\sharp ; {\Bbb Q}_p).\end{tikzcd}\]
\end{thm}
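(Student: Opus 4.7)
The plan is to derive the theorem from Theorem \ref{thm:beilfibernologktheory} via a vertical pasting argument, reducing the cartesianness of the desired square to an intermediate one.

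For the factorization, since $R/p$ is an $\F_p$-algebra, Proposition \ref{square.7} yields $K(R/p; \Q_p) \simeq \KH(R/p; \Q_p)$. Composing with the natural map $\KH(R/p) \to \KH(R/p, P)$ from diagram \eqref{eq:log-KH-trc} and the log cyclotomic trace of \cite[Theorem F]{Par2} furnishes the desired factorization, with agreement with the original composition following from naturality of the cyclotomic trace. For the top horizontal map of the square, verticality of $(R, P)$ over $\cO_K^\sharp$ implies $R - \partial R = R[1/p]$, so by \cite[Example 3.6, Theorem 3.7]{Par3} one has $\KH((R, P)) \simeq \KH(R[1/p])$; regularity of $R[1/p]$ further gives $\KH(R[1/p]) \simeq K(R[1/p])$, and the top horizontal map is then the composition $K(R[1/p]; \Q_p) \simeq \KH((R, P); \Q_p) \to \KH((R/p, P); \Q_p)$.

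For the cartesianness, consider the vertically-stacked diagram
\begin{equation*}
\begin{tikzcd}[row sep=small]
K(R; \Q_p) \ar[d] \ar[r] & K(R/p; \Q_p) \ar[d] \\
K(R[1/p]; \Q_p) \ar[d] \ar[r] & \KH((R/p, P); \Q_p) \ar[d] \\
\HC^-((R, P)/\cO_K^\sharp; \Q_p) \ar[r] & \HP((R, P)/\cO_K^\sharp; \Q_p),
\end{tikzcd}
\end{equation*}
whose outer square is cartesian by Theorem \ref{thm:beilfibernologktheory}. By the pasting lemma, the bottom subsquare --- which is the desired square --- is cartesian as soon as the top subsquare is.

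The main obstacle is showing that the top subsquare is cartesian, which amounts to the identification
\[
\fib(K(R; \Q_p) \to K(R/p; \Q_p)) \simeq \fib(K(R[1/p]; \Q_p) \to \KH((R/p, P); \Q_p))
\]
of horizontal fibers. The left-hand fiber is identified with $\fib(\TC((R, P); \Q_p) \to \TC((R/p, P); \Q_p))$ via the classical CMM square \cite[Theorem 4.36]{CMM} combined with Proposition \ref{prop:tccartnolog}. The right-hand fiber admits the same identification, using $K(R[1/p]; \Q_p) \simeq \KH((R, P); \Q_p)$ and a logarithmic analogue of CMM for the Henselian pair $((R, P), (p))$, asserting that the log cyclotomic trace $\KH \to \TC$ has equivalent relative fibers over the special fiber with $\Q_p$ coefficients. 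This last step is where the bulk of the technical work lies; I expect it to follow by extending the arguments of \cite{CMM, AMMN} to the log setting via the log cyclotomic trace of \cite[Theorem F]{Par2}, exploiting the regular, log regular, and vertical structure of $(R, P)$.
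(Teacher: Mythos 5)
Your overall skeleton is the same as the paper's: identify the top horizontal map via $K(R[1/p])\simeq\KH(R[1/p])\simeq\KH((R,P))$, get the factorization from Proposition \ref{square.7} and \eqref{eq:log-KH-trc}, and reduce cartesianness of the desired square, by vertical pasting against Theorem \ref{thm:beilfibernologktheory}, to cartesianness of the square
\[
\begin{tikzcd}
\KH((R,P);\Q_p)\ar[d]\ar[r]&\KH((R/p,P);\Q_p)\ar[d]\\
\TC((R,P);\Q_p)\ar[r]&\TC((R/p,P);\Q_p).
\end{tikzcd}
\]
(The pasting direction ``outer and top cartesian $\Rightarrow$ bottom cartesian'' is fine here because everything is stable, so one argues on horizontal fibers.)

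The genuine gap is in how you propose to establish that remaining square. You defer it to ``a logarithmic analogue of CMM for the Henselian pair $((R,P),(p))$,'' to be obtained ``by extending the arguments of \cite{CMM, AMMN} to the log setting.'' No such extension is available, and it is not merely technical: the Clausen--Mathew--Morrow argument rests on the Dundas--Goodwillie--McCarthy theorem and the structure of $K^{\mathrm{inv}}$ as a truncating invariant, and a logarithmic version of DGM is precisely the open problem alluded to in the introduction's discussion of \cite[Conjecture 1.5]{Lun}. The paper circumvents this entirely with an input you are missing: \cite[Theorem 2.9]{Par3} provides a cartesian square
\[
\begin{tikzcd}
\KH(R;\Q_p)\ar[d]\ar[r]&\KH(R/p;\Q_p)\ar[d]\\
\KH((R,P);\Q_p)\ar[r]&\KH((R/p,P);\Q_p)
\end{tikzcd}
\]
for $(R,P)$ regular and log regular, i.e.\ the exact $\KH$-analogue of Proposition \ref{prop:tccartnolog}. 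Combining this with the identifications $K(R)\simeq\KH(R)$ (regularity) and $K(R/p;\Q_p)\simeq\KH(R/p;\Q_p)$ (Proposition \ref{square.7}), the ordinary CMM square, Proposition \ref{prop:tccartnolog}, and the compatibility \eqref{eq:log-KH-trc}, the two squares with common top row $\KH(R;\Q_p)\to\KH(R/p;\Q_p)$ and bottom rows in $\KH((-),P)$ and $\TC((-),P)$ are both cartesian, whence the intermediate square above is cartesian by two-out-of-three on fibers. No new trace-theoretic input is needed. If you replace your final step by this citation, your argument closes and essentially coincides with the paper's proof.
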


\begin{proof}
Since $K(R[1/p] ; {\Bbb Q}_p) \simeq {\rm KH}((R, P) ; {\Bbb Q}_p)$ and the bottom square of \[\begin{tikzcd}[row sep = small]{\rm KH}((R, P) ; {\Bbb Q}_p) \ar{r} \ar{d} & {\rm KH}((R/p, P) ; {\Bbb Q}_p) \ar{d} \\ {\rm TC}((R, P) ; {\Bbb Q}_p) \ar{d} \ar{r} & {\rm TC}((R/p, P) ; {\Bbb Q}_p) \ar{d} \\ {\rm HC}^-((R, P) / {\mathcal O}_K^\sharp ; {\Bbb Q}_p) \ar{r} & {\rm HP}((R, P) / {\mathcal O}_K^\sharp ; {\Bbb Q}_p)\end{tikzcd}\] is cartesian
 by combining  \eqref{eq:introsquare3} with Proposition \ref{prop:nodvrbase}, it suffices to construct the top square and show that it is cartesian. This appears as the back face of the cube \[\begin{tikzcd}[row sep = tiny, column sep=small]
&
{\rm KH}((R, P) ; {\Bbb Q}_p)
\ar{rr}{}
\ar[]{dd}[near end]{}
& & {\rm KH}((R / p , P) ; {\Bbb Q}_p)
\ar[dd,dotted]
\\
{\rm KH}(R ; {\Bbb Q}_p)
\ar[crossing over]{rr}[near start]{}
\ar{dd}[swap]{}
\ar{ur}
& & {\rm KH}(R / p ; {\Bbb Q}_p)
\ar{ur}
\\
&
{\rm TC}((R, P) ; {\Bbb Q}_p)
\ar[near start]{rr}{}
& & {\rm TC}((R/p, P) ; {\Bbb Q}_p)
\\
{\rm TC}(R ; {\Bbb Q}_p)
\ar{ur}
\ar{rr}
& & {\rm TC}(R / p ; {\Bbb Q}_p)
\ar[crossing over, leftarrow, near start]{uu}{}
\ar{ur}
\end{tikzcd}\] that we now proceed to construct. In the front face, we claim that the top terms participate in equivalences
\(
K(R ; {\Bbb Q}_p) \simeq {\rm KH}(R ; {\Bbb Q}_p)\text{ and }K(R / p ; {\Bbb Q}_p) \simeq {\rm KH}(R/p ; {\Bbb Q}_p). 
\)
For the first equivalence, this follows from $R$ being regular. For the second, we observe that since $(R,P)$ is vertical and log regular, the log structure is supported on the special fiber $R/p$, so that Proposition \ref{square.7} is applicable to $R/p$ by \cite[Theorems III.1.11.6, III.1.11.8]{Ogu}. Under these equivalences, the vertical arrows of the front face are induced by the usual cyclotomic trace, and the front face is thus cartesian by \cite[Theorem 4.36]{CMM}. The top face is cartesian by \cite[Theorem 2.9]{Par3}, while the bottom is cartesian by Proposition \ref{prop:tccartnolog}.
The left face is obtained by \cite[Theorem F]{Par2}, $K(R;\Q_p)\simeq \KH(R;\Q_p)$, and $K(R[1/p];\Q_p)\simeq \KH((R,P);\Q_p)$.
Hence we can construct the dotted arrow by taking pushouts, and then the back face is cartesian. 
\end{proof}

  For a $\cY$ vertical, regular, log regular formal log scheme over $\cO_K^\sharp$, the generic fiber $\cY_K$ is a rigid analytic variety over $K$.
See \cite[Theorem 3.5]{Mor16} for the continuous K-theory of rigid analytic varieties.
By \cite[Definition 3.1]{Mor16} and  \cite[Theorem 5.23]{CMM} (see \cite[Example 4.3]{AMMN}),
for every $p$-complete ring $R$ with bounded $p$-power torsion,
we have
\begin{equation}
\label{square.8.1}
K^\cts(R[1/p];\Z_p)
\simeq
K(R[1/p];\Z_p).
\end{equation}

\begin{cor}
\label{square.8}
Let $\cY$ be a finite-dimensional, Noetherian, vertical, regular, log regular formal log scheme over $\cO_K^\sharp$.
Then there exists a natural cartesian square
\[
\begin{tikzcd}
K^\cts(\cY_K;\Q_p)\ar[d]\ar[r]&
\KH(\cY_k;\Q_p)\ar[d]
\\
\HC^-(\cY/\cO_K^\sharp;\Q_p)\ar[r]&
\HP(\cY/\cO_K^\sharp;\Q_p)
\end{tikzcd}
\]
of spectra.
\end{cor}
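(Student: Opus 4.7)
The plan is to deduce the statement from Theorem \ref{prop:old-thm-A} in two stages: first upgrade from a single affine formal log scheme to an arbitrary $\cY$ via strict Nisnevich (or just strict Zariski) descent, and then pass from $\Q_p$- to $\Q$-coefficients on the upper row, mirroring the proof of Corollary \ref{cor:beilfibernologktheory}.

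First, cover $\cY$ by a strict Zariski family $\{\Spf(R_i, P_i) \to \cY\}_{i \in I}$ of affine formal log schemes such that each $(R_i, P_i)$ is an $\cO_K^\sharp$-algebra which is vertical, regular, and log regular. Each $R_i$ is $p$-adically complete and Noetherian, hence Henselian along $(p)$ and with bounded $p$-power torsion, so Theorem \ref{prop:old-thm-A} applies locally and yields a cartesian square
\[
\begin{tikzcd}
K(R_i[1/p];\Q_p)\ar[d]\ar[r] & \KH((R_i/p, P_i);\Q_p)\ar[d] \\
\HC^-((R_i, P_i)/\cO_K^\sharp;\Q_p)\ar[r] & \HP((R_i, P_i)/\cO_K^\sharp;\Q_p).
\end{tikzcd}
\]
The identification \eqref{square.8.1} rewrites the upper-left corner as $K^\cts((\Spf R_i)_K;\Q_p)$. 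All four of the associated presheaves on fs formal log schemes over $\cO_K^\sharp$ enjoy strict Zariski descent: the log $\KH$ term by \cite{Par, Par3}, the log $\HC^-$ and log $\HP$ terms because they are built from log $\THH$, and the continuous $K$-theory of the rigid generic fiber by the sheaf property recorded in \cite[Theorem 3.5]{Mor16}. Sheafifying the above square and evaluating on $\cY$ yields a cartesian square which is the $\Q_p$-coefficient version of the target.

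To promote the upper row from $\Q_p$- to $\Q$-coefficients, I would paste the resulting square with the auxiliary square
\[
\begin{tikzcd}
K^\cts(\cY_K;\Q)\ar[d]\ar[r] & \KH(\cY_k;\Q)\ar[d] \\
K^\cts(\cY_K;\Q_p)\ar[r] & \KH(\cY_k;\Q_p)
\end{tikzcd}
\]
and invoke the pasting lemma for cartesian squares. This auxiliary square is cartesian by exactly the mechanism used in the proof of Corollary \ref{cor:beilfibernologktheory}: apply \cite[Corollary 3.8]{AMMN} (or Geisser-Hesselholt \cite{GH}) affine-locally and then sheafify. The underlying reason it works is that the horizontal fibers of the already-established $\Q_p$-coefficient square are $\Q_p$-module spectra (being controlled by the $\HC^-, \HP$ row), and consequently their $\Q$- and $\Q_p$-rationalizations coincide.

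The main obstacle I expect is the descent step: one must verify that $K^\cts(-_K;\Q_p)$, viewed as a presheaf on affine formal log schemes $\Spf(R, P)$, assembles into a strict Zariski sheaf compatibly with Morrow's sheaf $\cY_K \mapsto K^\cts(\cY_K;\Q_p)$ on rigid analytic varieties, so that evaluating the sheafified square at $\cY$ truly recovers $K^\cts(\cY_K;\Q_p)$ in the upper-left corner. Once this compatibility is granted, the remainder of the argument is essentially formal pasting together with citations of \cite[Corollary 3.8]{AMMN} and the existing sheaf-theoretic inputs.
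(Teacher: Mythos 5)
Your proposal is correct and follows essentially the same route as the paper, whose entire proof reads ``This is an immediate consequence of Proposition \ref{prop:old-thm-A} and \eqref{square.8.1}''; you are simply making explicit the Zariski gluing and the passage from $\Q_p$- to $\Q$-coefficients that the paper leaves implicit (the latter being handled exactly as in Corollary \ref{cor:beilfibernologktheory}). One caveat: of your two justifications for the auxiliary square being cartesian, the second one does not work as stated --- knowing that $\fib\bigl(K^\cts(\cY_K;\Q_p)\to\KH(\cY_k;\Q_p)\bigr)$ is a $\Q_p$-module (because it agrees with the fiber of the $\HC^-\to\HP$ row) says nothing about whether the map \emph{from} the fiber of the $\Q$-coefficient row into it is an equivalence. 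The first justification is the right one, but reducing to \cite[Corollary 3.8]{AMMN} affine-locally requires routing through the identifications $K(R[1/p])\simeq\KH((R,P))$ and the cartesian square of \cite[Theorem 2.9]{Par3} to replace $\fib\bigl(K(R[1/p])\to\KH((R/p,P))\bigr)$ by $\fib\bigl(K(R)\to\KH(R/p)\bigr)$, together with the observation that Proposition \ref{square.7} also holds with $\Q$-coefficients (its proof shows $\TC(A;\Q)\simeq\TC(A;\Q_p)$ for $\F_p$-algebras, so the same cdh argument applies); with these inputs your pasting argument goes through.
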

\begin{proof}
This is an immediate consequence of Proposition \ref{prop:old-thm-A} and \eqref{square.8.1}.
\end{proof}

\section{The Hyodo-Kato Chern class}

Fix a pre-log ring $(k, P)$ with $k$ a perfect field of positive characteristic and $P$ a monoid with $\overline{P} = P / P^*$ fine and saturated. The purpose of this section is to introduce the Hyodo-Kato Chern class generalizing the crystalline Chern class.

\subsection{Hyodo--Kato cohomology}
\label{HK} Let $k$ be a perfect field of positive characteristic. For every log smooth fs log scheme $X$ saturated over $(k,P)$,
Hyodo and Kato defined a logarithmic version of the de Rham-Witt complex \cite[4.1]{HK}
$W\Omega_{X/(k,P)}$.
Its cohomology
\[
R\Gamma_{\HK}(X/W) = R\Gamma_\setale(X,W\Omega_{X/(k,P)})
\]
is called the (integral) \emph{Hyodo-Kato cohomology}. 
Here,
$\setale$ denotes the strict \'etale topology on $X$. According to \cite{HK}, it agrees with the log crystalline cohomology $R\Gamma_{\rm log crys}(X/W^0)$, where $W^0$ denotes the ring of Witt vectors over $k$, equipped with the canonical Teichm\"uller lift log structure (see \cite[Section 4]{HK}). We will not use the crystalline description in what follows.

\begin{rmk} Let us collect a number of relevant properties:

\begin{enumerate}

\item

By \cite[Corollary 4.5]{HK},
we have an equivalence
\begin{equation}
\label{HK.0.4}
W\Omega_{X/(k,P)}/p
\simeq
\Omega_{X/(k,P)}.
\end{equation}

\item By \cite[Theorem 4.31]{BLMP}, for every saturated log smooth morphism $X\to \Spec(k,\N)$,
we have an equivalence
\[
R\Gamma_\setale(X,W\Omega_{X/(k,\N)})\simeq R\Gamma_{\cPrism}(\ul{X},\partial X\oplus_\N \N_{\rm perf}),
\]
natural in $X$,
where $\N_\mathrm{perf}$ denotes the colimit $p$-perfection of $\N$.
Therefore we immediately deduce that for every vector bundle $\ul{\cE}\to \ul{X}$ of rank $r+1$, the maps in \cite[(3.12.2)]{BLMP} induce a projective bundle formula
\begin{align*}
\bigoplus_{i=0}^r R\Gamma_\setale(X,W\Omega_{X/(k,\N)})[-2i] \simeq & \bigoplus_{i=0}^r R\Gamma_{\cPrism}((\ul{X},\partial X \oplus_\N \N_{\rm perf})[-2i] 
\\
\xrightarrow{\simeq}&
R\Gamma_{\cPrism}(\P(\cE)_{\rm pf}) \simeq
R\Gamma_\setale(\P(\cE),W\Omega_{X/(k,\N)}),
\end{align*}
where $\P(\cE)$ and $\P(\cE)_{\rm pf}$ are the log schemes with the pullback log structure from $X$ and $(\ul{X},\partial X\oplus_\N\N_{\rm perf})$ and the equivalence in the middle is \cite[Lemma 3.13]{BLMP}.
\end{enumerate}
\end{rmk}
From (2) we also deduce the $(\P^n,\P^{n-1})$ invariance: since the Nygaard filtration is complete and exhaustive, we may show invariance on the graded pieces. Using the secondary filtration, it suffices to prove that
\begin{equation}\label{eq:box-HK-1}
    R\Gamma_\setale((\ul{X},\partial X\oplus_\N \N_{\rm perf}),\L_{-/k})\simeq R\Gamma_\setale((\P^n\times \ul{X},\partial (X\times (\P^n,\P^{n-1}))\oplus_\N \N_{\rm perf}),\L_{-/k}).
\end{equation}
As observed in the proof of \cite[Theorem 4.27]{BLMP}, for any saturated log smooth map $(k,\N)\to (A,M)$,
we have equivalences \[
\L_{(A,M)/(k,\N)}\simeq \L_{(A,M\oplus_\N \N_{\rm perf})/(k,\N_{\rm perf})}\simeq \L_{(A,M\oplus_\N \N_{\rm perf})/k}.
\]
From this, we deduce \eqref{eq:box-HK-1} from the $(\P^n,\P^{n-1})$ invariance of the cotangent complex \cite[Proposition 8.3]{BLPO}.

\begin{prop}
\label{HK.1}
Let $X$ be a strict smooth fs log scheme over $(k,P)$.
Then the natural map of complexes
\[
R\Gamma_\et(\ul{X},W\Omega_{\ul{X}/k})
\to
R\Gamma_\setale(X,W\Omega_{X/(k,P)})
\]
is an equivalence.
\end{prop}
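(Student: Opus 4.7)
The plan is to reduce this claim to a purely local sheaf-level identification on $\underline{X}_\et$, and then exploit strictness to identify the logarithmic de Rham-Witt complex with the classical one.

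First, I would observe that the strict étale site of $X$ is canonically equivalent to the étale site $\underline{X}_\et$, since a strict étale cover of $X$ is nothing but an étale cover of $\underline{X}$ equipped with the pulled-back log structure. Both sides of the asserted equivalence therefore compute étale cohomology on $\underline{X}$, with coefficients $W\Omega_{\underline{X}/k}$ on the source and $W\Omega_{X/(k,P)}$ on the target. The statement thus reduces to showing that the natural morphism $W\Omega_{\underline{X}/k} \to W\Omega_{X/(k,P)}$ of complexes of étale sheaves is a quasi-isomorphism, a property that may be checked étale locally on $\underline{X}$.

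Second, I would use the mod-$p$ reduction afforded by (HK.0.4) and its classical analogue: it suffices to prove that the induced map $\Omega^\bullet_{\underline{X}/k} \to \Omega^\bullet_{X/(k,P)}$ is a quasi-isomorphism, from which the integral Witt-level statement is bootstrapped---either by derived $p$-completeness of both sides, or by an induction on the Witt truncation using the restriction/Verschiebung exact sequences, whose terms are all expressible through the classical differentials. For the mod-$p$ claim, strictness of $X \to (k,P)$ means $\overline{\mathcal{M}}_X = f^{-1}\overline{P}$ étale locally; consequently the relative log cotangent complex satisfies $L_{(X,\mathcal{M}_X)/(k,P)} \simeq L_{\underline{X}/k}$, and hence $\Omega^\bullet_{X/(k,P)} \simeq \Omega^\bullet_{\underline{X}/k}$.

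The delicate step, and the likely main obstacle, is the bootstrap from the mod-$p$ equivalence to an integral equivalence of the Witt pro-complexes. This can either be handled by a direct induction on the Witt length $n$ using the restriction and Verschiebung exact sequences, whose compatibility with strictness must be verified, or more cleanly by appealing to the universal property of Hyodo-Kato's log de Rham-Witt pro-complex as the initial log $F$-$V$-procomplex, together with the observation that strictness reduces the log universal problem to the classical one. Either way, care is needed in tracking the interaction of $F$, $V$, and $d\log$ under the strict hypothesis, and in ensuring that the local identification is compatible with the gluing data across an étale cover.
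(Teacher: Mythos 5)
Your proposal follows essentially the same route as the paper: reduce strict \'etale locally, use \eqref{HK.0.4} to pass to the mod-$p$ statement, and then invoke strictness to identify $\Omega^q_{(A,P)/(k,P)}$ with $\Omega^q_{A/k}$ (the paper cites \cite[Proposition IV.1.2.15]{Ogu} for exactly this). The bootstrap you flag as delicate is precisely the step the paper absorbs into ``by \eqref{HK.0.4} it suffices to show,'' i.e.\ derived $p$-completeness of both Witt complexes; your treatment of that point is, if anything, more explicit.
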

\begin{proof}
The question is strict \'etale local on $X$,
so we reduce to the case of $(k,P)\to (A,P)$ with $A$ strictly local.
By \eqref{HK.0.4},
it suffices to show
\begin{equation}
\label{HK.1.1}
\Omega_{A/k}^q
\cong
\Omega_{(A,P)/(k,P)}^q,
\end{equation}
which follows from \cite[Proposition IV.1.2.15]{Ogu}.
\end{proof}

\begin{prop}
\label{HK.2}
The presheaf of complexes $R\Gamma_\setale(-,W\Omega_{-/(k,P)})$ on the category of log smooth fs log schemes saturated over $(k,P)$ is a strict smooth sheaf.
\end{prop}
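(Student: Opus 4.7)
The presheaf $R\Gamma_\setale(-,W\Omega_{-/(k,P)})$ is already a strict \'etale sheaf by construction: the complex $W\Omega_{X/(k,P)}$ is defined as a complex of sheaves on the small strict \'etale site of $X$, and $R\Gamma_\setale$ computes derived global sections on this site. Hence the substance of the proposition is to upgrade strict \'etale descent to descent along arbitrary strict smooth covers.

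The first observation I would exploit is that, for a strict morphism $f\colon Y\to X$ of fs log schemes, the log structure on $Y$ is pulled back from $X$, and the relative log de Rham--Witt sheaves are compatible with strict base change in the evident sense. In particular, for a strict smooth cover $\{U_i\to X\}_{i\in I}$ with Čech nerve $U_\bullet$, the question reduces to showing that the canonical map
\[
W\Omega_{X/(k,P)}\to R\pi_{\bullet,*}^{\setale}W\Omega_{U_\bullet/(k,P)}
\]
is an equivalence of complexes of sheaves on the strict \'etale site of $X$, where $\pi_\bullet$ is the augmentation. By standard spectral sequence arguments, this can be checked termwise on the pieces $W_n\Omega^q_{X/(k,P)}$.

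For fixed $n,q$, the sheaf $W_n\Omega^q_{X/(k,P)}$ is coherent as a $W_n\cO_X$-module, and hence corresponds to a quasi-coherent sheaf on the Witt-vector scheme $W_n(\ul{X})$. Since the functor $W_n$ preserves smoothness, the induced cover $\{W_n(\ul{U_i})\to W_n(\ul{X})\}$ is a smooth (in particular faithfully flat quasi-compact) cover of schemes, and the pullback of $W_n\Omega^q_{X/(k,P)}$ along this cover is $W_n\Omega^q_{U_i/(k,P)}$. The desired descent is then a consequence of classical fpqc cohomological descent for quasi-coherent sheaves. Assembling the $q$'s along the de Rham--Witt differentials yields descent for each truncation $W_n\Omega_{X/(k,P)}$, and passing to the inverse limit $W\Omega = \lim_n W_n\Omega$ via a Milnor exact sequence (using surjectivity of the restriction maps for log smooth $X$) concludes.

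The main obstacle is the technical verification that each $W_n\Omega^q_{X/(k,P)}$ has the expected quasi-coherence on the Witt scheme $W_n(\ul{X})$ and that it is compatible with strict smooth base change in the precise form required. This is a local question in the strict \'etale topology on $X$, where the local structure theorem for log smooth morphisms provides a chart $P\to Q$ making $X$ strict smooth over $\Spec(k[Q])$; combined with Proposition~\ref{HK.1} applied relative to such local models, it reduces the descent to the well-known smooth descent properties of the classical de Rham--Witt complex on smooth $k$-schemes.
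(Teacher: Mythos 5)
There is a genuine gap at the heart of your argument, namely the base-change claim ``the pullback of $W_n\Omega^q_{X/(k,P)}$ along this cover is $W_n\Omega^q_{U_i/(k,P)}$''. This is true for strict \emph{\'etale} covers but false for strict \emph{smooth} ones: strictness only guarantees that the log structure pulls back, not the differentials. For a smooth non-\'etale $f\colon U\to X$ one has an exact sequence involving $\Omega^1_{U/X}\neq 0$, so $\Omega^q_{U/(k,P)}$ is strictly larger than $f^*\Omega^q_{X/(k,P)}$ (already $X=\Spec(k)$, $U=\A^1_k$, $q=1$ shows this), and the same failure propagates to each $W_n\Omega^q$. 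Consequently the cosimplicial object attached to the \v{C}ech nerve is \emph{not} the \v{C}ech complex of a quasi-coherent sheaf pulled back along an fpqc cover of Witt schemes, and classical cohomological descent for quasi-coherent sheaves does not apply. The descent statement is still true, but the mechanism is the much less formal flat descent for (wedge powers of) the log cotangent complex, which is exactly what your argument would need to import and does not. Your closing appeal to ``well-known smooth descent properties of the classical de Rham--Witt complex'' begs the same question, since those properties are themselves proved by this kind of argument rather than by quasi-coherence.

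The paper's proof is one line and avoids all of this: by \eqref{HK.0.4} the complex $W\Omega_{X/(k,P)}$ is $p$-complete with $W\Omega_{X/(k,P)}/p\simeq \Omega_{X/(k,P)}$, so the fiber of the strict smooth descent comparison map is $p$-complete and vanishes if and only if it vanishes mod $p$ (reduction mod $p$ is a finite colimit and so commutes with the totalization). This reduces the claim to strict smooth descent for the log de Rham complex $R\Gamma_{\setale}(-,\Omega_{-/(k,P)})$, which is \cite[Theorem 2.3]{BLPO2}. If you want to salvage your termwise strategy, you would have to replace the quasi-coherence step by an actual proof of smooth descent for each $\Omega^q_{-/(k,P)}$ (e.g.\ via the transitivity sequence and the conormal filtration, \`a la Bhatt--Mathew), at which point you have essentially reproved the cited result; the mod-$p$ reduction is the cleaner route.
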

\begin{proof}
By \eqref{HK.0.4},
it suffices to show that $R\Gamma_\setale(-,\Omega_{-/(k,P)})$ is a strict smooth sheaf of complexes.
This is a consequence of \cite[Theorem 2.3]{BLPO2}.
\end{proof}

\subsection{Chern orientations} 
We review the theory of Chern orientations in logarithmic motivic homotopy theory \cite[Section 7]{BPO2}.
For a scheme $S$,
let $\lSm_S$ denote the category of log smooth fs log schemes over $S$.
Let $\SmlSm_S$ denote the category of smooth log smooth fs log schemes over $S$,
i.e., the category of those fs log schemes $X$ over $S$ such that $X\to S$ is log smooth and $\ul{X}\to \ul{S}$ is smooth.
The logarithmic motivic homotopy category $\logSH(S)$ in \cite{BPO} admits the model
\[
\logSH(S)
\simeq
\Sp_{\P^1}(\Sh_\sNis(\SmlSm_S,\Sp)[(\P^\bullet,\P^{\bullet-1})^{-1}])
\]
by  \cite[Corollary 3.4.7]{BPO2}.
Here,
$\sNis$ denotes the strict Nisnevich topology,
and $(\P^\bullet,\P^{\bullet-1})$ denotes the family of projections $X\times (\P^m,\P^{m-1})\to X$ for all $X\in \SmlSm_S$ and integers $m\geq 1$.

Let $\bE$ be a homotopy commutative monoid in $\logSH(S)$,
where $S$ is a scheme.
The $\bE$-cohomology groups
\[
\bE^{i,j}(X)
:=
\Hom_{\logSH(S)}(\Sigma^\infty X_+,\Sigma^{i,j}\bE)
\]
form a natural graded ring $\bE^{*,*}(X)$.
Let $\unit$ denote the unit of $\bE^{*,*}(X)$.
Recall from \cite[Definition 7.1.1]{BPO2} that a Chern orientation on $\bE$ is a class $\bE^{2,1}(\P^\infty/\pt)$ whose restriction to $\P^1/\pt$ is the class $(\P^1/\pt)\otimes \unit$ of $\bE^{2,1}(\P^1/\pt)$.

\begin{prop}
Let $\bE$ be a homotopy commutative monoid in $\logSH(S)$,
where $S$ is a scheme.
Then there exists a one-to-one correspondence between the set of Chern classes of $\bE$ and the set of functors
\[
c_1\colon \Pic \to \bE^{2,1}
\]
from $\SmlSm_S$ to $\Fun(\Delta^1,\Set)$ modulo natural isomorphisms.
\end{prop}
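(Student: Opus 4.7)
The plan is to reproduce, in the logarithmic setting of $\logSH(S)$, the classical correspondence in oriented motivic homotopy theory (Morel--Voevodsky, Panin, D\'eglise) between orientations of a ring spectrum and first Chern class natural transformations. Concretely, I would construct explicit maps in both directions and check that they are mutually inverse.

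For the forward map, given a Chern orientation $c \in \bE^{2,1}(\P^\infty/\pt)$, for each $X \in \SmlSm_S$ and each $L \in \Pic(X)$ I would choose a classifying map $f_L \colon X \to \P^\infty$ with $L \cong f_L^* \cO(1)$ and define $c_1(L) := f_L^*(c) \in \bE^{2,1}(X)$. Naturality in $X$ is automatic. Well-definedness (independence of the choice of classifying map, and dependence only on the isomorphism class of $L$) would follow from the representability of $\Pic$ in the strict Nisnevich topology together with the $(\P^\bullet,\P^{\bullet-1})$-invariance built into the model of $\logSH(S)$ recalled above.

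Conversely, given a natural transformation $c_1 \colon \Pic \to \bE^{2,1}$ of functors $\SmlSm_S \to \Set$, I would evaluate on the tautological bundles $\cO(1)$ on each $\P^n$. Compatibility along the standard closed immersions $\P^{n-1}\hookrightarrow \P^n$ yields a compatible system, and the vanishing of the associated $\lim^1$-term produces a class in $\bE^{2,1}(\P^\infty)$. Naturality with respect to pullback from $\pt$, where $\Pic$ is trivial, forces this class to refine canonically to a class in $\bE^{2,1}(\P^\infty/\pt)$. The normalization condition that its restriction to $\P^1/\pt$ coincides with $(\P^1/\pt)\otimes \unit$ translates, via the projective bundle formula on $\P^1$, into a compatibility condition on $c_1$ applied to $(\P^1,\cO(1))$; the correspondence is arranged so that both sides encode precisely this datum.

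The main obstacle is a projective bundle formula for $\bE$-cohomology of the log smooth schemes $\P^n$ in $\logSH(S)$, together with the vanishing of $\lim^1 \bE^{2,1}(\P^n/\pt)$. Granting these, the resulting identification $\bE^{2,1}(\P^\infty/\pt) \cong \prod_{n\geq 1} \bE^{*,*}(S)\cdot c^n$ combined with a Yoneda-type argument (applied to the line bundle representing the identity on $\P^\infty$) shows that the two constructions above are mutually inverse, completing the bijection. The analogous projective bundle formula was already observed for the Hyodo--Kato complex in the preceding remark, and the general case would follow by the same pattern, using strict Nisnevich descent and $(\P^\bullet,\P^{\bullet-1})$-invariance in the chosen presentation of $\logSH(S)$.
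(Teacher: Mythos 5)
Your proposal follows essentially the same route as the paper: the forward direction is the pullback-along-classifying-maps construction that the paper delegates to \cite[Definition 7.1.3]{BPO2}, and the inverse is evaluation of $c_1$ on the tautological line bundle of $\P^\infty$, with the paper's two-line proof simply suppressing the technical points (well-definedness, assembly over the finite $\P^n$, the normalization on $\P^1/\pt$) that you spell out and leave conditional. The one discrepancy is that the paper, matching its stated definition of a Chern orientation, evaluates on $\cO(-1)$ rather than $\cO(1)$, so your normalization should be adjusted to that convention.
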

\begin{proof}
A Chern orientation yields such a functor, see \cite[Definition 7.1.3]{BPO2}.
On the other hand, when a functor $c_1\colon \Pic \to \bE^{2,1}$ is given,
the image of $\cO(-1)\in \Pic(\P^\infty)$ yields a Chern orientation of $\bE$.
These two are inverses to each other by construction.
\end{proof}

Let $\bE$ be a Chern oriented homotopy commutative monoid in $\logSH(S)$,
where $S$ is a scheme.
Recall from \cite[Theorem 7.2.8]{BPO2} that there exists a natural projective bundle formula
\begin{equation}
\bigoplus_{i=0}^d
\bE^{*-i,*-2i}(X)
\cong
\bE^{*,*}(\P(\cE))
\end{equation}
for every rank $d+1$ vector bundle $\cE\to X$ in $\lSm_S$.
Using this,
\cite[Definition 7.3.1]{BPO2} provides a natural definition of higher Chern classes
\[
c_i(\cE)\in \bE^{2i,i}(X).
\]
The Whitney sum formula
\[
c_i(\cE)
=
\sum_{j=0}^i
c_j(\cE')c_{i-j}(\cE'')
\]
holds for every exact sequence
\[
0
\to
\cE'
\to
\cE
\to
\cE''
\to
0
\]
of vector bundles over $X$,
see \cite[Proposition 7.3.16]{BPO2}.

\begin{const}
\label{HK.4}
Let $(A,M)$ be a log smooth saturated pre-log $(k,P)$-algebra such that $P\to M$ is saturated.
From the cdga $W\Omega_{(A,M)/(k,P)}$,
we can construct a natural cdga $E_*$ such that the degree $i$ part $E_i$ is $W\Omega_{(A,M)/(k,P)}^{\geq i}$ for every integer $i$.
The projective bundle formula in Section \ref{HK} yields $c\colon \Sigma_{S^1} ^\infty \P^1 \to E_1[2]$ in $\Sh_\setale(\lSm_{(k,P)}^\sat,\Sp)$ such that the diagram in  \cite[Proposition 2.16]{BLMP} commutes for $X\in \lSm_{(k,P)}^\sat$,
where $\lSm_{(k,P)}^\sat$ denotes the category of log smooth fs log schemes saturated over $(k,P)$.
By adapting \cite[Proof of Proposition 2.16]{BLPO} to $\Sh_\setale(\lSm_{(k,P)}^\sat,\Sp)$,
we obtain
\[
(W\Omega_{/(k,P)}^{\geq 0},
W\Omega_{/(k,P)}^{\geq 1},
\cdots)
\in
\CAlg(\Sp_{\P^1}(\Sh_\setale(\lSm_{(k,P)}^\sat,\Sp))).
\]
For $X\in \lSm_{(k,P)}^\sat$,
by restriction,
we obtain
\[
(W\Omega_{(-)\times_{\ul{X}}X/(k,P)}^{\geq 0},
W\Omega_{(-)\times_{\ul{X}}X/(k,P)}^{\geq 1},
\cdots)
\in
\CAlg(\Sp_{\P^1}(\Sh_\setale(\SmlSm_{\ul{X}},\Sp))).
\]
This is $(\P^m,\P^{m-1})$-invariant for every integer $m\geq 1$ due to Section \ref{HK}.
Hence we obtain
\[
(W\Omega_{(-)\times_{\ul{X}}X/(k,P)}^{\geq 0},
W\Omega_{(-)\times_{\ul{X}}X/(k,P)}^{\geq 1},
\cdots)
\in
\CAlg(\logSH(\ul{X})).
\]

This admits a Chern orientation given by the image of $\cO(-1)\in \Pic(\P_k^\infty)$ under the composite
\[
\Pic(\P_k^\infty) \to H_\et^2(\P_k^\infty,W\Omega_{\P_k^\infty/k})
\to
H_\setale^2(\P_X^\infty,W\Omega_{\P_X^\infty/(k,P)}),
\]
where the first map is the crystalline Chern class constructed by Gros \cite[Th\'eor\'eme III.1.1.1, Section III.2.1]{Gros}.
Then we obtain higher Chern classes $c_i(\cE)\in H_\setale^{2i}(X,W\Omega_{X/(k,P)})$ for every vector bundle $\cE\to X$ and integer $i$ satisfying the Whitney sum formula.
Using the usual formula \cite[\href{https://stacks.math.columbia.edu/tag/02UM}{Tag 02UM}]{stacks-project},
we obtain the \emph{Hyodo-Kato Chern character} $\ch_\HK(\cE)\in \bigoplus_{i=0}^\infty H_\setale^{2i}(X,W\Omega_{X/(k,P)})$.
From this, we also obtain
\begin{equation}
\label{HK.4.1}
\ch_\HK
\colon
K_0(\ul{X})
\to
\bigoplus_{i=0}^\infty H_\setale^{2i}(X,W\Omega_{X/(k,P)}).
\end{equation}
\end{const}

\begin{rmk}
\label{HK.5}
Let $X$ be a strict smooth fs log scheme over $(k,P)$,
and let $\cE\to X$ be a vector bundle.
By construction,
for every integer $i$,
the crystalline Chern class
\[
c_i(\ul{\cE})\in H_\et^{2i}(\ul{X},W\Omega_{\ul{X}/k})
\]
corresponds to the Hyodo-Kato Chern class
\[
c_i(\cE)\in H_\setale^{2i}(X,W\Omega_{X/(k,P)})
\]
under the isomorphism $H_\et^{2i}(\ul{X},W\Omega_{\ul{X}/k})\cong H_\setale^{2i}(X,W\Omega_{X/(k,P)})$ obtained by Proposition \ref{HK.1}.
\end{rmk}
\begin{rmk}
Let $Y$ be a strict normal crossing variety over $k$. Its rigid cohomology $R\Gamma_{\rm rig}(Y/K)$ is a complex of $K$-vector spaces, and it is $\mathbb{A}^1$-motivic, in the sense that it satisfies $\mathbb{A}^1$-invariance, étale descent, and $\P^1$-stability. As such, we obtain a representing object $E_{\rm rig} \in \CAlg(\mathrm{SH}_{\et}(k, \mathrm{Mod}_K))$ (see \cite[Theorem 1.21]{ayoub-weil}). This ring object is automatically $\mathrm{KGL}$-oriented (in fact, $R\Gamma_{\rm rig}$ is representable in $\mathrm{DM}(k)$), and as such we obtain a rigid Chern character
\[\ch_{\rm rig}\colon K_0(Y) \to \bigoplus_{i=0}^\infty H^{2i}_{\rm rig}(Y/K).\]
\end{rmk}
\begin{rmk}\label{rmk:rig_vs_logrig}
    Let $Y$ be again a proper semistable scheme over $k$, and write $X$ for the scheme $Y$ seen as a log smooth log scheme over the standard log point $(k,\mathbb{N})$. By e.g., \cite[3.11]{GK}, we can identify $H^*_{\mathrm{log}\text{-}\mathrm{rig}}(X)$ with the rational log crystalline cohomology $H^*_{\mathrm{log}\text{-}\mathrm{crys}}(X/W) \otimes K$, which is computed as $R\Gamma_\setale(X,W\Omega_{X/(k,P)})\otimes_{K_0} K = R\Gamma_{\HK}(X/W)\otimes_{K_0}K$, where $K_0=W(k)[1/p]$. 
    
    Again by Gro\ss e-Kl\"onne \cite[Section 1]{GK}, there is a natural morphism \[\phi\colon R\Gamma_{\rm rig}(Y/K)\to R\Gamma_{\mathrm{log}\text{-}\mathrm{rig}}(X/K)\]
induced by the natural morphism of log schemes $f\colon X \to  \ul{X} = (Y, \mathcal{O}^\times)$, which acts as the identity on the underlying scheme $Y$ and consequently induces the identity map on the sheaf of units $\mathcal{O}_Y^\times$. The first Chern class $c_1(L)$ of a line bundle $L$ is defined in both theories via the boundary map of the exponential sequence associated with $\mathcal{O}^\times$ (represented explicitly by the map $d\log\colon \mathcal{O}^\times \to \Omega^1$); because the morphism $f$ respects the sheaf $\mathcal{O}^\times$ and is compatible with the derivation $d$, it sends the rigid representative $d\log \tilde{g}_{ij}$ (where $\tilde{g}_{ij}$ is an overconvergent lift of the transition function) to the log crystalline representative $d\log [g_{ij}]$ (where $[g_{ij}]$ is the Teichmüller lift in the log de Rham-Witt complex $W\Omega_{-/(k, \mathbb{N)}}$), ensuring that $\phi(c_1^{\mathrm{rig}}(L)) = c_1^{\log}(L)$. This implies commutativity for all higher Chern classes via the splitting principle. In particular, we get a commutative triangle:
\[
\begin{tikzcd}
    K_0(Y; \Q) \arrow[r, "\ch_{\rm rig}"] \arrow[dr, "\ch_\HK"']&  \bigoplus_{i=0}^\infty H^{2i}_{\rm rig}(Y/K)\arrow[d, "\phi"]\\
    & \bigoplus_{i=0}^\infty H_\setale^{2i}(X,W\Omega_{X/(k,\N)})\otimes_{K_0} K
\end{tikzcd}
\]
Using again the identification between log rigid and Hyodo-Kato cohomology, by \cite[Corollary 4.70]{BGV} we have $N\circ \phi =0$, where $N$ is the monodromy operator. In particular, for every $\alpha \in K_0(Y)$, we have $N(\ch_{\HK}(\alpha))=0$.
\end{rmk}
\begin{rmk}
    The fact that $N\circ \ch_\HK =0$ can also be deduced more directly. Indeed, we have $N\circ c_1([L])=0$ for any line bundle $L$ by \cite[Proposition 2.2]{Yam11}. Since $N$ satisfies the Leibniz rule (it acts as a derivation on the algebra structure of cohomology, see \cite[Remark 3.8]{BGV}) and higher Chern classes  are polynomials in $c_1$, it follows that $N(\ch_{\HK}(\alpha))=0$ for any $\alpha \in K_0(Y)$.
\end{rmk}

\section{Splitting the \texorpdfstring{${\rm HKR}$}{HKR}-style filtrations} We now aim to obtain natural splittings of the rationalized filtrations of ${\rm HC}^-$ and ${\rm HP}$, in analogy with e.g.\ \cite[5.1.12]{Lod}. Throughout this section,
let $K$ be a complete discrete valuation field of characteristic $0$ with ring of integers $\cO_K$ and perfect residue field $k$ of characteristic $p$.
Let $K_0:=W(k)[1/p]$ and $\cO_K^\sharp:=(\cO_K,\cO_K-\{0\})$.

A \emph{semistable formal log scheme $\cY$ over $\cO_K^\sharp$} means a formal log scheme over $\cO_K^\sharp$ such that locally on $\cY$,
there exists a semistable fs log scheme $Y$ over $\cO_K^\sharp$ such that $\cY$ is isomorphic to the $p$-adic completion of $Y$.

Let $\cY_k$ denote the special fiber of $\cY$,
which is a log smooth fs log scheme over $(k,\N)$.
Let $\cY_K$ denote the rigid analytic generic fiber of $\cY$.

By \cite[Theorem 2]{Ogu95} (see the discussion at  \cite[p. 210]{Ogu95}),
there exists a natural equivalence, depending on the choice of a uniformizer $\pi$ of $\mathcal{O}_K$, 
\begin{equation}
\label{rig.0.2} 
R\Gamma_\dR(\cY/\cO_K^\sharp)
\simeq
R\Gamma_\setale(\cY_k,W\Omega_{\cY_k/(k,\N)})\otimes_{K_0} K,
\end{equation}
which is called the Hyodo-Kato isomorphism (or Hyodo-Kato equivalence).

Let $Y$ be a \emph{proper} semistable fs log scheme over $\cO_K^\sharp$.
Then we also have
\begin{equation}
\label{rig.0.3}
R\Gamma_\dR((Y_p^\wedge)/\cO_K^\sharp)
\simeq
R\Gamma_\dR(Y_K/K),
\end{equation}
where $Y_K$ denotes the generic fiber of $Y$, and $Y_p^\wedge$ denotes the $p$-completion of $Y$.
Note that \eqref{rig.0.3} is compatible with the Hodge filtrations.
Combining \eqref{rig.0.2} and \eqref{rig.0.3},
we obtain a natural equivalence (see \cite[Theorem 3]{Ogu95})
\[
\rho_\pi\colon R\Gamma_\dR(Y_K)
\simeq
R\Gamma_\setale(Y_k,W\Omega_{Y_k/(k,\N)})\otimes_{K_0} K,
\]
which is also called the Hyodo-Kato equivalence.
Here, $Y_k$ denotes the special fiber of $Y$.

\begin{prop}
\label{rig.3}
Let $\cY$ be a semistable formal log scheme over $\cO_K^\sharp$.
Then we have natural decompositions
\begin{gather*}
\HP(\cY/\cO_K^\sharp;\Q_p)
\simeq
\prod_{i\in \Z} R\Gamma_\dR(\cY/\cO_K^\sharp)[2i],
\\
\HC^-(\cY/\cO_K^\sharp;\Q_p)
\simeq
\prod_{i\in \Z} \Fil^{\geq i} R\Gamma_\dR(\cY/\cO_K^\sharp)[2i],
\end{gather*}
where $\Fil^{\geq i}R\Gamma_\dR(\cY/\cO_K^\sharp)
:=
R\Gamma_\setale(\cY,\Omega_{\cY/\cO_K^\sharp}^{\geq q})$.
\end{prop}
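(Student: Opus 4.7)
The plan is to construct the decompositions locally using the logarithmic HKR theorem of \cite{BLPO}, then glue by strict étale descent. Working locally on $\cY$, we may assume that $\cY = \Spf(R,P)$ is the $p$-adic completion of an affine semistable fs log scheme over $\cO_K^\sharp$. By the logarithmic HKR theorem \cite[Theorem 1.1]{BLPO}, log Hochschild homology $\HH((R,P)/\cO_K^\sharp)$ carries a natural complete multiplicative filtration whose $i$-th graded piece is $\Omega^i_{(R,P)/\cO_K^\sharp}[i]$, with the Connes operator inducing the de Rham differential on graded pieces. Taking $S^1$-homotopy fixed points and the Tate construction yields induced complete filtrations on $\HC^-((R,P)/\cO_K^\sharp)$ and $\HP((R,P)/\cO_K^\sharp)$ whose $i$-th graded pieces are $\Fil^{\geq i}R\Gamma_\dR((R,P)/\cO_K^\sharp)[2i]$ and $R\Gamma_\dR((R,P)/\cO_K^\sharp)[2i]$ respectively.

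The second step is to split these filtrations rationally. The standard mechanism is via Adams operations $\psi^k$ acting on log $\HC^-$ and $\HP$ by multiplication by $k^i$ on the $i$-th graded piece: over $\Q_p$, the idempotent projectors onto the weight-$i$ eigenspaces produce a direct product decomposition. Equivalently, since rational $BS^1$ is formal, the $S^1$-equivariant structure on the rational HKR filtration is controlled entirely by the Connes operator, and formality yields the splitting.

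Finally, we globalize: the presheaves $\HC^-(-/\cO_K^\sharp;\Q_p)$, $\HP(-/\cO_K^\sharp;\Q_p)$, and $\Fil^{\geq i}R\Gamma_\dR(-/\cO_K^\sharp)$ are strict étale sheaves on semistable formal log schemes over $\cO_K^\sharp$. Indeed, strict étale descent for log $\HH$ (as in \cite[Theorem 2.3]{BLPO2}) propagates to $\HC^-$ and $\HP$ since $S^1$-homotopy fixed points and the Tate construction are limits; for the log de Rham side, strict étale descent follows from Proposition \ref{HK.2} combined with \eqref{HK.0.4}. The local equivalences therefore glue to the asserted global decompositions. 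The main delicacy is the rational splitting, where one must verify that the chosen mechanism (Adams operations, or formality of $BS^1$) extends cleanly to the logarithmic $S^1$-equivariant setting and respects the filtration constructed from \cite{BLPO}.
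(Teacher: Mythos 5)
Your proposal is correct and follows essentially the same route as the paper, which likewise deduces the decompositions from the logarithmic HKR theorem of \cite[Theorem 1.1]{BLPO} and splits the resulting filtrations rationally via Adams operations as in \cite[Section 9.4]{BMS2}, arguing as in \cite[Construction 4.11]{AMMN}. The only ingredient the paper makes explicit that you leave implicit is the continuity of $\HC^-$ and $\HP$ (\cite[Proposition 4.2]{AMMN}), needed to pass between the formal log scheme and its affine algebraic models.
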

\begin{proof}
This is analogous to \cite[Construction 4.11]{AMMN}: Replacing the classical HKR-theorem with its logarithmic analog \cite[Theorem 1.1]{BLPO} and using continuity of ${\rm HC}^-$ and ${\rm HP}$ \cite[Proposition 4.2]{AMMN}, we may argue exactly as in the non-logarithmic case (using Adams operations as in \cite[Section 9.4]{BMS2}). 
\end{proof}

\begin{thm}
\label{rig.4}
Let $X$ be a proper flat scheme over $\cO_K$ with semistable special fiber $X_k$.
Then there exists a natural cartesian square
\[
\begin{tikzcd}
K^\cts(X;\Q)\ar[d]\ar[r]&
K(X_k;\Q)\ar[d]
\\
\prod_{i\in \Z} \Fil^{\geq i}R\Gamma_\dR(X_K/K)[2i]\ar[r]&
\prod_{i\in \Z}  R\Gamma_\dR(X_K/K)[2i]
\end{tikzcd}
\]
of spectra,
where $X_K$ denotes the generic fibers of $X$.
\end{thm}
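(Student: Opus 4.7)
The plan is to specialize \Cref{cor:beilfibernologktheory} to the $p$-adic completion $\cY := X^{\wedge}_p$, equipped with the canonical log structure coming from the semistable special fiber, and then rewrite the bottom row of the resulting square in terms of the Hodge-filtered de Rham cohomology of the generic fiber. Concretely, since $X$ is proper flat over $\cO_K$ with semistable special fiber, Zariski locally $X$ is \'etale over an $\cO_K$-scheme of the form $\Spec \cO_K[X_1,\dots,X_n]/(X_1\cdots X_i - \pi)$. As recalled after \Cref{thm:logbek}, these are precisely the model examples of vertical, regular, log regular $\cO_K^\sharp$-algebras, so $\cY$ (with its strict semistable log structure) is a vertical, regular, log regular formal log scheme over $\cO_K^\sharp$ in the sense of \Cref{cor:beilfibernologktheory}. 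Noting $\ul{\cY}_k = X_k$ and $K^\cts(\ul{\cY};\Q) = K^\cts(X;\Q)$ by definition of continuous $K$-theory, the corollary produces a natural cartesian square
\[
\begin{tikzcd}
K^\cts(X;\Q)\ar[d]\ar[r]&
K(X_k;\Q)\ar[d]
\\
\HC^-(\cY/\cO_K^\sharp;\Q_p)\ar[r]&
\HP(\cY/\cO_K^\sharp;\Q_p).
\end{tikzcd}
\]

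Next, I would invoke \Cref{rig.3} to identify the bottom row with the Hodge-filtered and unfiltered derived de Rham cohomology of $\cY$ over $\cO_K^\sharp$. The decompositions it provides are natural and compatible with the canonical map $\HC^- \to \HP$, which on the de Rham side is induced by the inclusion of the filtration into its zeroth step. Substituting these decompositions yields an equivalent cartesian square whose bottom row reads
\[
\prod_{i \in \Z} \Fil^{\geq i} R\Gamma_\dR(\cY/\cO_K^\sharp)[2i] \longrightarrow \prod_{i \in \Z} R\Gamma_\dR(\cY/\cO_K^\sharp)[2i].
\]

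Finally, I would use the comparison \eqref{rig.0.3}: since $X$ is proper over $\cO_K$, there is a natural equivalence $R\Gamma_\dR(\cY/\cO_K^\sharp) \simeq R\Gamma_\dR(X_K/K)$ compatible with the Hodge filtrations. Plugging this into the bottom row gives the cartesian square asserted in the theorem. The main obstacle in carrying out this plan is bookkeeping rather than conceptual: one must check that the splitting of \Cref{rig.3} is functorial in the map of filtrations $\Fil^{\geq \bullet} \to \Fil^{\geq 0}$ and compatible with the proper comparison \eqref{rig.0.3} (which preserves the Hodge filtration by construction), so that the two substitutions together assemble into a \emph{morphism} of cartesian squares of spectra. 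This follows because the filtration of \Cref{rig.3} is built from the logarithmic HKR filtration of \cite{BLPO}, which is natural in the log scheme and is compatible with the standard filtered structure on $\HC^-$ and $\HP$; the passage from the formal model to the generic fiber in \eqref{rig.0.3} is likewise filtered.
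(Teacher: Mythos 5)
Your proposal is correct and follows essentially the same route as the paper: equip $X$ with its canonical semistable log structure, apply \Cref{cor:beilfibernologktheory} to the $p$-adic completion, and then rewrite the bottom row via \Cref{rig.3} and the proper comparison \eqref{rig.0.3}. The only cosmetic difference is that the paper cites \cite[Corollary IV.3.1.18]{Ogu} to produce the global semistable fs log scheme $Y$ with $\ul{Y}\cong X$, whereas you argue from the \'etale-local standard models; both are fine.
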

\begin{proof}
By \cite[Corollary IV.3.1.18]{Ogu},
there exists a semistable fs log scheme $Y$ over $\cO_K^\sharp$ such that $\ul{Y}\cong X$.
Combine Corollary \ref{cor:beilfibernologktheory}, \eqref{rig.0.3}, and Proposition \ref{rig.3}.
\end{proof}

\begin{thm}
\label{rig.5}
Let $Y$ be a semistable fs log scheme over $\cO_K^\sharp$.
Then there exists a natural cartesian square
\[
\begin{tikzcd}
K^\cts(Y_K;\Q_p)\ar[d]\ar[r]&
\mathrm{KH}(Y_k;\Q_p)\ar[d]
\\
\prod_{i\in \Z}\Fil^{\geq i} R\Gamma_\dR(Y_K/K)[2i]\ar[r]&
\prod_{i\in \Z}  R\Gamma_\dR(Y_K/K)[2i]
\end{tikzcd}
\]
of spectra.
\end{thm}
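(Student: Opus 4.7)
The proof follows precisely the blueprint of Theorem \ref{rig.4}, now using the $\KH$-variant of the fiber square in Corollary \ref{square.8} rather than its plain $K$-theoretic counterpart in Corollary \ref{cor:beilfibernologktheory}. Concretely, the plan is to pass to the $p$-adic completion $\cY := Y_p^\wedge$ and observe that, since $Y$ is a semistable fs log scheme over $\cO_K^\sharp$, $\cY$ is a finite-dimensional, Noetherian, vertical, regular, and log regular formal log scheme over $\cO_K^\sharp$. These conditions are local on $Y$, and may be checked on the explicit semistable charts $\mathcal{O}_K[X_1,\ldots,X_n]/(X_1\cdots X_i-\pi)$ recalled after Theorem \ref{thm:logbek}; verticality is immediate from the shape of the map of charts $\langle \pi\rangle\to \N^i$, $n\mapsto(n,\ldots,n)$ as in Example \ref{exm:vertical}. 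The identifications $\cY_k \simeq Y_k$ (the mod-$p$ reduction is insensitive to $p$-adic completion) and $\cY_K \simeq Y_K$ (the rigid analytic generic fiber) are immediate.

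Applying Corollary \ref{square.8} to $\cY$ then furnishes a natural cartesian square
\[
\begin{tikzcd}
K^\cts(Y_K;\Q)\ar[d]\ar[r]&
\KH(Y_k;\Q)\ar[d]
\\
\HC^-(\cY/\cO_K^\sharp;\Q_p)\ar[r]&
\HP(\cY/\cO_K^\sharp;\Q_p)
\end{tikzcd}
\]
of spectra, so it only remains to rewrite the bottom row in the desired form.

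Proposition \ref{rig.3} supplies natural decompositions of $\HP(\cY/\cO_K^\sharp;\Q_p)$ and $\HC^-(\cY/\cO_K^\sharp;\Q_p)$ as products of shifted copies of $R\Gamma_\dR(\cY/\cO_K^\sharp)$ and its Hodge filtration, respectively. Combined with the Hyodo-Kato equivalence \eqref{rig.0.3}, which provides an identification $R\Gamma_\dR(\cY/\cO_K^\sharp) \simeq R\Gamma_\dR(Y_K/K)$ compatible with the Hodge filtrations, one directly obtains the desired cartesian square.

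The only mildly delicate point is the compatibility of the Hyodo-Kato identification with the Hodge filtrations on both sides; however, this is precisely the feature of \eqref{rig.0.3} already recorded in the discussion preceding it, so no new input beyond the machinery assembled in Sections \ref{sec:logbek} and \ref{sec:hkchern} is required.
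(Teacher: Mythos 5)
Your proposal is correct and follows exactly the paper's own proof, which is simply "Combine Corollary \ref{square.8}, \eqref{rig.0.3}, and Proposition \ref{rig.3}." The extra detail you supply — verifying that $Y_p^\wedge$ satisfies the hypotheses of Corollary \ref{square.8} via the standard semistable charts, and noting the filtration-compatibility of \eqref{rig.0.3} — is a faithful expansion of the steps the paper leaves implicit.
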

\begin{proof}
Combine Corollary \ref{square.8}, \eqref{rig.0.3}, and Proposition \ref{rig.3}.
\end{proof}

\section{The Hyodo--Kato Chern character via the trace}\label{sec:hkchern} Keep the notation of the previous section. We now compare the Hyodo-Kato Chern character with a construction obtained by the cyclotomic trace.
As a consequence,
we will identify the obstruction class for the $p$-adic deformation problem with the Hyodo-Kato Chern character.

\begin{const}
\label{comp.1}
Let $\cY$ be a semistable formal log scheme over $\cO_K^\sharp$.
Let $\mathrm{trc}_\HK$ denote the composite
\[
K(\ul{\cY}_k;\Q)
\to
\HP(\cY/\cO_K^\sharp;\Q_p)
\xrightarrow{\simeq}
\prod_{i\in \Z} R\Gamma_{\dR}(\cY/\cO_K^\sharp)[2i],
\]
where the first map is obtained by the right vertical map of \eqref{square.4.1},
and the second map is obtained by Proposition \ref{rig.3}.
For $\lambda\in K^\times$, let $\mathrm{trc}_\HK^\lambda$ be the composite of $\mathrm{trc}_\HK$ and the automorphism of $\prod_{i\in \Z} R\Gamma_{\dR}(\cY_K/K)[2i]$ multiplying by $\lambda^i$ for each factor.
\end{const}

\begin{prop}
\label{comp.2}
There exists $\lambda\in K^\times$ such that for every semistable formal log scheme $\cY$ over $\cO_K^\sharp$,
the Hyodo-Kato Chern character
\[
\ch_\HK
\colon
K_0(\ul{\cY}_k)\to \prod_{i\in \Z} H_{\setale}^{2i}(\cY_k,W\Omega_{\cY_k/(k,\N)})\otimes_{K_0}K
\]
agrees with $\pi_0(\mathrm{trc}_\HK^\lambda)$ under the Hyodo-Kato isomorphism $\rho_\pi$.
\end{prop}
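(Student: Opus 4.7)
The plan is to follow the strategy of the non-logarithmic comparison in \cite{AMMN}: use the splitting principle and naturality to reduce to line bundles, and then perform a universal computation on a classifying projective space to pin down the scalar $\lambda$. First I would verify that both $\ch_\HK$ and $\pi_0(\mathrm{trc}_\HK^\lambda)$ are ring homomorphisms from $K_0(\ul{\cY}_k)$ into $\prod_{i \ge 0} H^{2i}_\setale(\cY_k, W\Omega_{\cY_k/(k,\N)}) \otimes_{K_0} K$, and that both are contravariantly natural in $\cY$. For $\ch_\HK$ this is built into Construction \ref{HK.4} via the Whitney sum formula and the exponential definition of the Chern character. For the trace it follows from the cyclotomic trace being a map of $\mathbb{E}_\infty$-ring spectra together with the multiplicativity of the HKR decomposition of Proposition \ref{rig.3}; the $\lambda^i$ rescaling preserves multiplicativity since it scales each weight piece uniformly.

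Next, I would apply the splitting principle to reduce to line bundles. On the Hyodo-Kato side this is available because Construction \ref{HK.4} provides a projective bundle formula; on the trace side the HKR decomposition together with the projective bundle formula for log de Rham cohomology yield the same conclusion. Combined with additivity on short exact sequences, we are reduced to comparing $\ch_\HK([L])$ with $\pi_0(\mathrm{trc}_\HK^\lambda)([L])$ for $L \in \Pic(\ul{\cY}_k)$. Since both are exponentials in their respective weight-one components, this further reduces to showing $c_1^{\HK}(L) = \lambda \cdot c_1^{\mathrm{trc}}(L)$ inside $H^2_\setale(\cY_k, W\Omega_{\cY_k/(k,\N)}) \otimes_{K_0} K$.

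For the universal computation, I would observe that any line bundle $L$ on $\ul{\cY}_k$ factors, up to direct summands by trivial bundles, through a classifying map $\ul{\cY}_k \to \P^N_k$ for some $N$. Since $\P^N_k$ carries the trivial log structure, this is automatically a morphism of log schemes. By naturality, the required comparison reduces to the tautological line bundle $\cO(-1)$ on $\P^\infty_k$. There, $c_1^{\HK}(\cO(-1))$ coincides with Gros's crystalline Chern class by Construction \ref{HK.4} and Remark \ref{HK.5}, while the logarithmic trace agrees with the classical trace because the log structure is trivial. Thus the scalar $\lambda$ is pinned down, independently of $\cY$, by the classical comparison of the crystalline first Chern class with the weight-one component of the cyclotomic trace proved in \cite{AMMN}.

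The principal obstacle is verifying the naturality of the logarithmic trace along classifying maps $\cY_k \to (\P^N_k, \mathrm{triv})$ from a log scheme with nontrivial log structure to one with trivial log structure, and its compatibility with the Hyodo-Kato equivalence $\rho_\pi$. Concretely, one must show that pullback on log $\HP$ intertwines the log cyclotomic trace with its classical counterpart in a manner compatible with $\rho_\pi$ after the HKR splitting. Granting this compatibility, which one expects from the naturality of the log HKR filtration \cite{BLPO} and of Ogus's construction of $\rho_\pi$, the proof reduces cleanly to the smooth comparison in \cite{AMMN}.
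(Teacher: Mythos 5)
Your overall strategy --- reduce by naturality to a universal bundle on a classifying object, use strictness to identify the Hyodo--Kato Chern class with Gros's crystalline class (Remark \ref{HK.5}), and then quote the crystalline comparison from \cite[Proof of Proposition 4.12]{AMMN} --- is the same as the paper's. But your reduction mechanism has a genuine gap: an arbitrary line bundle on $\ul{\cY}_k$ is \emph{not} classified by a map to $\P^N_k$. Such a map exists only when $L$ is generated by finitely many global sections, and the special fiber of a semistable formal log scheme need not even be quasi-projective, so you cannot write a general $L$ (or a general class in $K_0$) in terms of globally generated bundles. The phrase ``up to direct summands by trivial bundles'' is a topological heuristic that does not hold algebraically. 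The paper avoids this entirely: it observes that $\Vect_n$ is represented by the Artin stack $\BGL_{n,k}$ and that \emph{both} sides of the comparison are sheaves for the smooth topology (the target by Proposition \ref{HK.2}), so the natural transformation $f_n^\lambda$ extends to $\cY=\BGL_{n,\cO_K^\sharp}$ itself and can be evaluated on the tautological rank-$n$ bundle there --- no splitting principle and no projective-space classifying maps are needed. The smooth-descent input is the key technical point, and it is absent from your proposal.

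A secondary inaccuracy: you assert that the classifying map lands in $\P^N_k$ with the \emph{trivial} log structure and that this is ``automatically a morphism of log schemes.'' For the naturality of $\mathrm{trc}_\HK$, which is defined relative to $\cO_K^\sharp$, the target must be a formal log scheme over $\cO_K^\sharp$; its special fiber then carries the (non-trivial) log structure pulled back from the standard log point $(k,\N)$. What saves the argument is not triviality but \emph{strictness} over $(k,\N)$, which is exactly what Proposition \ref{HK.1} and Remark \ref{HK.5} exploit to pass to crystalline cohomology of the underlying scheme. With the target corrected to the strict object $\BGL_{n,\cO_K^\sharp}$ (or $\mathrm{B}\mathbb{G}_{m,\cO_K^\sharp}$ if one insists on line bundles, as in Proposition \ref{Pic.4}) and the smooth-sheaf descent supplied, your argument closes up and coincides with the paper's.
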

\begin{proof}
For every scheme $X$ and integer $n\geq 0$,
let $\Vect_n(X)$ denote the groupoid of $n$-dimensional vector bundles over $X$.
From $\mathrm{trc}_\HK^\lambda$ and \eqref{rig.0.2},
we obtain a natural map of spaces
\[
f_n^\lambda
\colon
\Vect_n(\ul{\cY}_k)
\to
\Omega_{S^1}^\infty
\big(
\prod_{i\in \Z}
R\Gamma_\setale(\cY_k,W\Omega_{\cY_k/(k,\N)})\otimes_{K_0}K[2i]
\big).
\]
The left-hand side is a smooth sheaf since $\Vect_n$ is represented by the quotient Artin stack $\BGL_{n,k}$, and the right-hand side is a smooth sheaf by Proposition \ref{HK.2}.
Hence we can define $f_n^\lambda$ for $\cY=\BGL_{n,\cO_K^\sharp}$.
Using the naturality of $f_n^\lambda$,
it suffices to show that $f_n^\lambda$ sends the tautological bundle $\cE$ of $\BGL_{n,k}$ to the Hyodo-Kato Chern character of $\cE$ for some $\lambda\in K^\times$.
By Remark \ref{HK.5},
we reduce to the analogous claim for the crystalline Chern character,
and this is already checked in \cite[Proof of Proposition 4.12]{AMMN}.
\end{proof}

Now, we prove Theorem \ref{thm:hkchern}:

\begin{thm}\label{thm:hkchern2} Let $X$ be a proper flat scheme over $\cO_K$ with semistable special fiber $X_k$.
Then a class $x\in K_0(X_k;\Q)$ lifts to $K_0^\cts(X;\Q)$ if and only if the Hyodo-Kato Chern character $\ch_\HK(x)$ is contained in $\bigoplus_{i= 0}^\infty \Fil^{\geq i}H_\dR^{2i}(X_K;\Q_p)$ under the Hyodo-Kato equivalence.
\end{thm}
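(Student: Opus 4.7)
The plan is to deduce the theorem formally from the cartesian square of Theorem \ref{rig.4}, using the identification of the vertical trace map with the Hyodo-Kato Chern character provided by Proposition \ref{comp.2}.

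First I would invoke the Mayer--Vietoris criterion for lifting in a cartesian square of spectra: given Theorem \ref{rig.4}, a class $x\in K_0(X_k;\Q)$ lifts to $K_0^\cts(X;\Q)$ if and only if its image in $\pi_0\bigl(\prod_{i\in\Z} R\Gamma_\dR(X_K/K)[2i]\bigr)\cong\bigoplus_{i\geq 0} H^{2i}_\dR(X_K/K)$ lies in the image of $\pi_0\bigl(\prod_{i\in\Z}\Fil^{\geq i}R\Gamma_\dR(X_K/K)[2i]\bigr)$. Next, degeneration of the Hodge-to-de Rham spectral sequence for the smooth proper $K$-scheme $X_K$ (available because we work in characteristic zero) ensures that each natural map $H^{2i}(\Fil^{\geq i}R\Gamma_\dR(X_K/K))\to H^{2i}_\dR(X_K/K)$ is injective with image $\Fil^i H^{2i}_\dR(X_K/K)$, so the image on $\pi_0$ of the bottom map in the square identifies with $\bigoplus_{i\geq 0}\Fil^{\geq i}H^{2i}_\dR(X_K/K)$. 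Meanwhile, unwinding Construction \ref{comp.1}, the image of $x$ under $\pi_0$ of the right vertical map is precisely $\mathrm{trc}_\HK(x)$. At this point the lifting criterion reads: $x$ lifts if and only if $\mathrm{trc}_\HK(x)\in\bigoplus_{i\geq 0}\Fil^{\geq i}H^{2i}_\dR(X_K/K)$.

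To translate this into the statement of the theorem, I would invoke Proposition \ref{comp.2}: there exists $\lambda\in K^\times$ such that the $i$-th component of $\ch_\HK(x)$ equals $\lambda^i$ times the $i$-th component of $\mathrm{trc}_\HK(x)$ under the Hyodo-Kato equivalence. Since each $\Fil^{\geq i}H^{2i}_\dR(X_K/K)$ is a $K$-linear subspace and $\lambda^i\in K^\times$, multiplication by $\lambda^i$ preserves it componentwise. Therefore the two conditions $\mathrm{trc}_\HK(x)\in\bigoplus_{i}\Fil^{\geq i}H^{2i}_\dR(X_K/K)$ and $\ch_\HK(x)\in\bigoplus_{i}\Fil^{\geq i}H^{2i}_\dR(X_K/K)$ are equivalent, finishing the proof.

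The current argument is essentially formal, as the genuine work has already been accomplished in Theorem \ref{rig.4} (encoding the $p$-adic deformation problem via the Hodge-filtered de Rham cohomology of the generic fiber through the Hyodo-Kato equivalence) and in Proposition \ref{comp.2} (matching the trace-theoretic invariant with the geometric Hyodo-Kato Chern character up to a uniform scaling factor). The step that requires the most care internally is the appeal to Hodge-de Rham degeneration used to identify the $\pi_0$ of the bottom map with the honest Hodge filtration on the generic fiber; the harmless scaling by $\lambda^i$ on each graded piece then finishes the job.
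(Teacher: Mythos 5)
Your proposal is correct and follows essentially the same route as the paper's proof: apply the lifting criterion coming from the cartesian square of Theorem \ref{rig.4} to reduce to the condition $\pi_0(\mathrm{trc}_\HK)(x)\in\bigoplus_{i}\Fil^{\geq i}H^{2i}_\dR(X_K/K)$, then invoke Proposition \ref{comp.2} to replace $\mathrm{trc}_\HK$ by $\ch_\HK$, the rescaling by $\lambda^i$ being harmless since it preserves each filtration step. The only additions you make are to spell out the Mayer--Vietoris lifting criterion and the Hodge--de Rham degeneration needed to identify the image of the bottom map on $\pi_0$ with the honest Hodge filtration, both of which the paper leaves implicit.
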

\begin{proof}
Let us first spell out what $\ch_\HK$ means in this case.
Consider $Y$ in the proof of Theorem \ref{rig.4}.
Then \eqref{HK.4.1} for $Y_k$ is
\[
\ch_\HK\colon K_0(X_k)
\to
\bigoplus_{i=0}^\infty H_{\setale}^{2i}(Y_k,W\Omega_{Y_k/(k,P)}).
\]

By Theorem \ref{rig.4},
$x$ lifts to $K_0^\cts(X;\Q)$ if and only if $\pi_0(\mathrm{trc}_\HK)(x)$ is contained in $\bigoplus_{i= 0}^\infty \Fil^{\geq i}H_\dR^{2i}(X_K/K)$.
Use Proposition \ref{comp.2} to conclude the proof. 
\end{proof}

\begin{rmk}Let $X$ be a proper flat scheme over $\cO_K$.
Due to Beilinson \cite{Bei2}, \cite[Theorem 4.13]{AMMN},
given $x\in K_0(X_k;\Q_p)$,
we know that there exists a natural obstruction class
\[
o(x)\in \bigoplus_{i=0}^\infty H_\dR^{2i}(X_K) / \Fil^{\geq i}H_\dR^{2i}(X_K)
\]
such that $o(x)$ vanishes if and only if $x$ lifts to $K_0^\cts(X;\Q_p)$.
Theorem \ref{thm:hkchern} identifies $o(x)$ with the Hyodo-Kato Chern character under the assumption that $X$ has semistable reduction.
\end{rmk}

\begin{rmk}
Let $X$ be a proper flat scheme over $\cO_K$ with semistable special fiber $X_k$.
Gregory and Langer \cite[Theorem 1.1]{GL} showed an analogous statement for their log-motivic cohomology $\mathbb{H}^{2n}(X_k,\Z_{\mathrm{log},X_k}(n))\otimes \Q$ under the assumptions $\cO_K=W(k)$ and $n<p$.
As stated in \cite[Remark 4.11]{GL},
their result does not reprove Yamashita's result \cite[Theorem 0.1]{Yam11}.
However, ours does reprove it,
see Theorem \ref{Pic.3}.
\end{rmk}

\begin{rmk}[On the Hyodo-Kato isomorphism] The obstruction to lifting $K_0$-classes provided by Theorem \ref{thm:logbek} makes implicit use of the Hyodo-Kato isomorphism for the identification between (rational) Hyodo-Kato cohomology and the de Rham cohomology of the generic fiber. Thus, a priori, the condition might depend on the choice of a uniformizer $\pi$. 

This is however not the case, as remarked already by Yamashita in \cite[Corollary 2.3]{Yam11} for the case of line bundles. Indeed, by \cite[Theorem 5.1]{HK} (see also \cite[Remark 4.18]{BGV}) for any $u\in \mathcal{O}_K^\times$, we have $\rho_{u\pi} = \rho_\pi \circ \exp(\log(u)N)$.
Since $N$ is nilpotent, $\exp(\log(u) N)$ is a finite sum $\Sigma_{i=0}^m \log(u)^i N^i/i!$, where $\log(u)\in W(k)[1/p]$ is the $p$-adic logarithm of $u$.
Furthermore,
$N\circ \ch_{\HK}=0$ by Remark \ref{rmk:rig_vs_logrig}.
Hence we conclude that for any
$\alpha\in K_0(X_k)$, the class $\rho_\pi(\ch_\HK(\alpha))$ in $\bigoplus_{i=0}^\infty H^{2i}_{\dR}(X_K)$ does not depend on the choice of $\pi$. 
\end{rmk}

\section{The semistable Lefschetz \texorpdfstring{$(1,1)$}{(1,1)}-Theorem}\label{sect:Yamashita}
Our next purpose is to show that Theorem \ref{thm:hkchern} recovers Yamashita's semistable generalization \cite[Theorem 0.1]{Yam11} of the Berthelot-Ogus theorem \cite[Theorem 3.8]{BO}.
We refer to \cite[Section 2]{Yam11} for the Hyodo-Kato first Chern class (also called log-crystalline first Chern class)
\[
c_1
\colon
\Pic(Z)
\to
H_{\setale}^{2}(Z,W\Omega_{\cY_k/(k,\N)})
\]
for every semistable fs log scheme $Z$ over $(k,\N)$,
where
\[
\Pic(Z):=H_\setale^1(Z,\cM_Z^\mathrm{gp}).
\]
A key property of $c_1$ in \cite[Proposition 2.4]{Yam11} is that for every semistable fs log scheme $Y$ over $\cO_K^\sharp$,
there is a natural commutative diagram
\begin{equation}
\label{Pic.0.1}
\begin{tikzcd}
\Pic(Y_K)\ar[d,"c_1"']\ar[r,"\cong"]&
\Pic(Y)\ar[r]&
\Pic(Y_k)\ar[d,"c_1"]
\\
H_\dR^2(Y_K/K)\ar[rr,"\cong"]&
&
H_{\setale}^{2}(Y_k,W\Omega_{Y_k/(k,\N)})\otimes_{K_0} K,
\end{tikzcd}
\end{equation}
where the left vertical map is the de Rham first Chern class.

\begin{prop}
\label{Pic.4}
There exists $\mu\in K^\times$ such that for every semistable $p$-adic formal log scheme $\cY$ over $\cO_K^\sharp$,
the induced diagram
\[
\begin{tikzcd}
\Pic(\ul{\cY}_k)\ar[r]\ar[d]&
K_0(\ul{\cY}_k)\ar[rr,"\pi_0(\mathrm{trc}_\HK^\mu)"]&
&
\prod_{i\in \Z} H_{\setale}^{2i}(\cY_k,W\Omega_{\cY_k/(k,\N)})\ar[d,"p_1"]
\\
\Pic(\cY_k)\ar[rrr,"\cong"]&
&
&
H_{\setale}^{2}(\cY_k,W\Omega_{\cY_k/(k,\N)})\otimes_{K_0}K
\end{tikzcd}
\]
commutes,
where $p_1$ denotes the projection.
\end{prop}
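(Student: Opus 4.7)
Take $\mu = \lambda$ with $\lambda \in K^\times$ the scalar produced by Proposition \ref{comp.2}, so that $\pi_0(\mathrm{trc}_\HK^\mu) = \ch_\HK$ on $K_0(\ul{\cY}_k)$. For a line bundle $L \in \Pic(\ul{\cY}_k)$,
\[
\ch_\HK([L]) = 1 + c_1^{\HK}(L) + \tfrac{1}{2} c_1^{\HK}(L)^2 + \cdots,
\]
so after the projection $p_1$, the top composite of the diagram sends $L$ to $c_1^{\HK}(L) \in H^2_\setale(\cY_k, W\Omega_{\cY_k/(k,\N)}) \otimes_{K_0} K$, where $c_1^{\HK}$ is the Hyodo-Kato first Chern class from Construction \ref{HK.4}. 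The task thus reduces to showing that $c_1^{\HK}$ agrees with the composite
\[
\Pic(\ul{\cY}_k) \longrightarrow \Pic(\cY_k) \xrightarrow{c_1^{\mathrm{Yam}}} H^2_\setale(\cY_k, W\Omega_{\cY_k/(k,\N)}) \otimes_{K_0} K,
\]
where the first map is induced by the inclusion $\cO_{\cY_k}^\times \hookrightarrow \cM_{\cY_k}^{\mathrm{gp}}$ and $c_1^{\mathrm{Yam}}$ is the log-crystalline first Chern class of \cite[Section 2]{Yam11}.

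The plan is to reduce this equality to a statement about the universal line bundle. Both constructions assemble into natural transformations on the category of log smooth fs log schemes saturated over $(k, \N)$, from $\Pic(\ul{-})$ to $H^2_\setale(-, W\Omega_{-/(k, \N)}) \otimes_{K_0} K$. Using the representability of $\Pic$ by $\P^\infty$ together with the strict smooth sheaf property of $R\Gamma_\setale(-, W\Omega)$ (Proposition \ref{HK.2}), any such natural transformation is determined by its value on $\cO(-1) \in \Pic(\P^\infty_k)$. By Construction \ref{HK.4} and Remark \ref{HK.5}, $c_1^{\HK}(\cO(-1))$ is Gros's classical crystalline first Chern class of $\cO(-1)$, transported via the isomorphism of Proposition \ref{HK.1}. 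On the other hand, $c_1^{\mathrm{Yam}}$ applied to the image of $\cO(-1)$ in $H^1(\P^\infty_k, \cM^{\mathrm{gp}})$ reduces, via the inclusion $\cO^\times \hookrightarrow \cM^{\mathrm{gp}}$, to the same class, since on $\P^\infty_k$ (with trivial log structure) Yamashita's $d\log$ morphism $\cM^{\mathrm{gp}} \to W\Omega^1$ restricts to the classical $d\log\colon \cO^\times \to W\Omega^1$ used in Gros's construction.

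The main obstacle is precisely this universal comparison: Yamashita's $c_1^{\mathrm{Yam}}$ is constructed directly in cohomological terms via $d\log$, while $c_1^{\HK}$ comes from the motivic machinery of \cite{BPO2}, so one must trace through the definitions to confirm that both produce the same element of $H^2_\setale(\P^\infty_k, W\Omega) \otimes_{K_0} K$. Once established on the universal line bundle, naturality extends the equality to all line bundles on any semistable $\cY$, as required; no splitting principle is needed here since the diagram only involves $\Pic$.
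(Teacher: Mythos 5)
Your reduction to the universal line bundle is the same first move as the paper's proof (which uses $\mathrm{B}\mathbb{G}_{m}$ rather than $\P^\infty$, an immaterial difference), but from that point on you set out to prove something strictly stronger than the statement, and the crucial step is left open. You fix $\mu=\lambda$ and then need the identity $c_1^{\HK}=c_1^{\mathrm{Yam}}$ on the universal class, i.e.\ a comparison between the motivically constructed orientation of Construction \ref{HK.4} (via Gros's crystalline class and Proposition \ref{HK.1}) and Yamashita's $d\log$-defined class. You yourself flag this as ``the main obstacle'' and only sketch why it should hold; as written, this is a genuine gap, and it is exactly the kind of normalization comparison that is delicate to carry out (and could a priori fail up to a unit).

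The paper's proof shows that no such comparison is needed, because the proposition only asserts the \emph{existence} of some $\mu\in K^\times$. After reducing to the tautological bundle on $\mathrm{B}\mathbb{G}_{m}$, one observes that the common target
\[
H_{\setale}^{2}(\cY_k,W\Omega_{\cY_k/(k,\N)})\otimes_{K_0}K
\cong
H_\dR^2(\mathrm{B}\mathbb{G}_{m,K})
\cong
H_\dR^2(\P^\infty_K)
\cong
K
\]
is a one-dimensional $K$-vector space, so the two natural transformations automatically differ by a scalar $\mu\in K$; Proposition \ref{comp.2} then guarantees that the top composite is nonzero on the tautological class (its degree-$2$ component is a nonzero multiple of the Hyodo--Kato $c_1$), whence $\mu\neq 0$. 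I would suggest you either adopt this dimension-count argument, or, if you want the sharper statement $\mu=\lambda$, supply the full $d\log$ comparison rather than asserting it.
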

\begin{proof}
As in the proof of Proposition \ref{comp.2},
we reduce to checking the commutativity for the class $[\cE]\in \Pic(\mathrm{B}\mathbb{G}_{m,k})$ with $\cY=\mathrm{B}\mathbb{G}_{m,\cO_K^\sharp}$,
where $\cE$ is the tautological line bundle.
We have
\[
H_{\setale}^{2}(\cY_k,W\Omega_{\cY_k/(k,\N)})\otimes_{K_0}K
\cong
H_\dR^2(\mathrm{B}\mathbb{G}_{m,K})
\cong
H_\dR^2(\P^\infty_K)
\cong
K.
\]
This shows the existence of $\mu\in K$.
Proposition \ref{comp.2} ensures $\mu\neq 0$.
\end{proof}

For abbreviation,
we set $\Pic(-;\Q):=\Pic(-)\otimes \Q$.
\begin{prop}
\label{Pic.1}
Let $X$ be a proper flat scheme over $\mathcal{O}_K$ with semistable special fiber $X_k$.
Then a class $x\in \mathrm{Pic}(X_k;\Q)$ lifts to $\mathrm{Pic}(X;\Q)$ if the first Hyodo-Kato Chern class $c_1(x)$ is contained in $\mathrm{Fil}^{\geq 1} H_\mathrm{dR}^2(X_K;\Q_p)$ under the Hyodo-Kato equivalence.
\end{prop}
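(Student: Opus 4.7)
My plan is to deduce Proposition \ref{Pic.1} directly from Theorem \ref{thm:hkchern2} applied to the class of $L$ in $K_0$: the idea is to translate the Hodge filtration condition on $c_1$ into one on the full Hyodo-Kato Chern character via the exponential formula, and then to recover a $\mathrm{Pic}$-lift from a $K_0$-lift by taking the determinant.

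First, I would reduce to the case $x = [L]$ for an honest line bundle $L$ on $X_k$. Since $\mathrm{Pic}(X_k;\Q) = \mathrm{Pic}(X_k)\otimes \Q$ and the statement is $\Q$-linear in $x$, every class is (up to scaling) the class of a genuine line bundle. Viewing $[L]$ as an element of $K_0(X_k;\Q)$, I would next observe that the Hyodo-Kato Chern character built in Construction \ref{HK.4} via Newton's formula satisfies $\ch_\HK([L]) = \exp(c_1(L))$, so that its degree-$2i$ component equals $c_1(L)^i/i!$ in $H_\setale^{2i}(Y_k, W\Omega_{Y_k/(k,\N)})$, where $Y$ is a semistable log model of $X$ as in the proof of Theorem \ref{rig.4}.

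The key step is then to verify that the hypothesis $c_1(L)\in \mathrm{Fil}^{\geq 1}H_\dR^2(X_K;\Q_p)$ implies $\ch_\HK([L])\in \bigoplus_{i\geq 0}\mathrm{Fil}^{\geq i}H_\dR^{2i}(X_K;\Q_p)$ under the Hyodo-Kato equivalence $\rho_\pi$. This is an immediate consequence of the multiplicativity of the Hodge filtration on de Rham cohomology: iterating the inclusion $\mathrm{Fil}^{\geq p}\cdot \mathrm{Fil}^{\geq q}\subset \mathrm{Fil}^{\geq p+q}$ yields $c_1(L)^i\in \mathrm{Fil}^{\geq i}H_\dR^{2i}(X_K;\Q_p)$ for every $i\geq 0$. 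At this point Theorem \ref{thm:hkchern2} produces a lift $\tilde{x}\in K_0^\cts(X;\Q)$ of $[L]$.

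Finally, I would recover a bona fide $\mathrm{Pic}$-lift by applying the determinant. Naturality of $\det\colon K_0\to \mathrm{Pic}$ produces $\det(\tilde x)\in \mathrm{Pic}^\cts(X;\Q):=(\lim_m\mathrm{Pic}(X/p^m))\otimes\Q$, a continuous lift of $L$. Since $X$ is proper over $\cO_K$, formal GAGA gives $\mathrm{Pic}(X)\cong \mathrm{Pic}(\widehat{X})\cong \lim_m\mathrm{Pic}(X/p^m)$; rationalizing yields the identification $\mathrm{Pic}(X;\Q)\simeq \mathrm{Pic}^\cts(X;\Q)$, so that $\det(\tilde{x})$ becomes the desired lift. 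The main technical obstacle I anticipate lies in this last passage, which requires care in commuting the rationalization past the $p$-adic limit; everything else reduces to the formal exponential computation and the input of Theorem \ref{thm:hkchern2}.
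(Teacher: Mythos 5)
Your proposal is correct and follows essentially the same route as the paper's proof: regard $x$ as a $K_0$-class, lift it via Theorem \ref{thm:hkchern2}, and descend to $\Pic$ by taking determinants, using the Grothendieck existence theorem to identify $(\lim_m\Pic(X/p^m))\otimes\Q$ with $\Pic(X;\Q)$. The only difference is that you spell out the verification that $c_1(x)\in\Fil^{\geq 1}H_\dR^2$ forces $\ch_\HK(x)\in\bigoplus_{i}\Fil^{\geq i}H_\dR^{2i}$ (via the exponential formula for line bundles and multiplicativity of the Hodge filtration), a step the paper leaves implicit.
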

\begin{proof}
Regard $x$ as a class of $K_0(X_k;\Q)$.
Theorem \ref{thm:hkchern} allows us to lift $x$ to $y\in K_0^\mathrm{cts}(X;\Q)$.
Consider
\[
\det(y)\in (\lim_n \mathrm{Pic}(X/p^n))\otimes \Q
\cong
\mathrm{Pic}(X;\Q),
\]
where the isomorphism is a consequence of the Grothendieck existence theorem \cite[Corollaire 5.1.6]{EGA31}.
Since $\det$ is compatible with the pullback maps for $K$ and $\mathrm{Pic}$,
$\det(y)$ is a lift of $\det(x)=x$.
\end{proof}

\begin{prop}
\label{Pic.2}
Let $i\colon Z\to Y$ be a strict closed immersion of fs log schemes.
Assume that $Y-Z$ has a trivial log structure.
Then the induced square
\[
\begin{tikzcd}
R\Gamma_\et
(\underline{Y},\cO_{\ul{Y}}^\times)\ar[d]\ar[r]&
R\Gamma_\et
(\underline{Z},\cO_{\ul{Z}}^\times)\ar[d]
\\
R\Gamma_\setale
(Y,\cM_Y^\mathrm{gp})\ar[r]&
R\Gamma_\setale
(Z,\cM_Z^\mathrm{gp})
\end{tikzcd}
\]
is cartesian.
\end{prop}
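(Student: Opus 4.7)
\noindent\emph{Proof plan.} The plan is to identify the cofibers of the two vertical maps in the square and thereby reduce cartesianness to a single cohomological equivalence between the characteristic sheaves on $Y$ and $Z$. Consider on $Y$ the tautological short exact sequence of strict étale sheaves of abelian groups
\[
0 \to \cO_{\ul{Y}}^\times \to \cM_Y^{\mathrm{gp}} \to \ol{\cM}_Y^{\mathrm{gp}} \to 0,
\]
where $\ol{\cM}_Y^{\mathrm{gp}} := \cM_Y^{\mathrm{gp}}/\cO_{\ul{Y}}^\times$ is the characteristic sheaf, and likewise on $Z$. (Throughout, strict étale sheaves on an fs log scheme are tacitly identified with étale sheaves on the underlying scheme, so the $\et/\setale$ mismatch in the square is cosmetic.) The cofibers of the two vertical maps in the square are then $R\Gamma_\setale(Y, \ol{\cM}_Y^{\mathrm{gp}})$ and $R\Gamma_\setale(Z, \ol{\cM}_Z^{\mathrm{gp}})$ respectively, so the square is cartesian if and only if the induced map between them is an equivalence.

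Let $j\colon U := Y - Z \hookrightarrow Y$ denote the open immersion complementary to $i$. The hypothesis that $U$ carries the trivial log structure translates to the vanishing of the stalk $\ol{\cM}_{Y,\bar{y}}^{\mathrm{gp}}$ at every geometric point $\bar{y}$ of $U$, i.e., $j^* \ol{\cM}_Y^{\mathrm{gp}} = 0$. Strictness of $i$ further gives $i^* \ol{\cM}_Y^{\mathrm{gp}} \cong \ol{\cM}_Z^{\mathrm{gp}}$. Plugging both into the standard open/closed localization triangle
\[
j_! j^* \ol{\cM}_Y^{\mathrm{gp}} \to \ol{\cM}_Y^{\mathrm{gp}} \to i_* i^* \ol{\cM}_Y^{\mathrm{gp}}
\]
yields an isomorphism $\ol{\cM}_Y^{\mathrm{gp}} \simeq i_* \ol{\cM}_Z^{\mathrm{gp}}$. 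Since $i$ is a closed immersion $i_*$ is exact, and hence
\[
R\Gamma_\setale(Y, \ol{\cM}_Y^{\mathrm{gp}}) \simeq R\Gamma_\setale(Y, i_* \ol{\cM}_Z^{\mathrm{gp}}) \simeq R\Gamma_\setale(Z, \ol{\cM}_Z^{\mathrm{gp}}),
\]
which is the desired equivalence.

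I do not anticipate any serious obstacle: the argument is essentially formal once the vertical cofibers are rephrased in terms of the characteristic sheaves. The only genuine input used is the hypothesis that $Y - Z$ has trivial log structure, which forces $j^* \ol{\cM}_Y^{\mathrm{gp}} = 0$. The remaining ingredients — the open/closed localization triangle for the decomposition $Z \hookrightarrow Y \hookleftarrow U$ and the exactness of pushforward along a closed immersion — reduce to their classical étale-cohomological counterparts via the equivalence between strict étale sheaves on an fs log scheme and étale sheaves on its underlying scheme.
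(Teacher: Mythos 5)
Your proof is correct. The only point worth double-checking is the exactness of $0 \to \cO_{\ul{Y}}^\times \to \cM_Y^{\mathrm{gp}} \to \ol{\cM}_Y^{\mathrm{gp}} \to 0$ as strict étale sheaves, which holds because the log structures are integral (so $\cM \to \cM^{\mathrm{gp}}$ is injective and $\cO^\times$ is exactly the unit subsheaf); with that in place, the identification of the vertical cofibers, the vanishing $j^*\ol{\cM}_Y^{\mathrm{gp}}=0$, the strictness identification $i^*\ol{\cM}_Y^{\mathrm{gp}}\cong\ol{\cM}_Z^{\mathrm{gp}}$, and the exactness of $i_*$ are all as you say.

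Your route is the transpose of the paper's. The paper takes fibers along the \emph{horizontal} maps, reducing cartesianness to the statement that $\ker(\cO_{\ul{Y}}^\times\to \ul{i}_*\cO_{\ul{Z}}^\times)\to\ker(\cM_Y^{\mathrm{gp}}\to i_*\cM_Z^{\mathrm{gp}})$ is an isomorphism of sheaves, and verifies this stalkwise, splitting into the cases $y\notin Z$ (where triviality of the log structure on $Y-Z$ gives $\cO^\times\cong\cM^{\mathrm{gp}}$) and $y\in Z$ (where strictness gives the identification of characteristic monoids, followed by group completion). You instead take cofibers along the \emph{vertical} maps, identify them with $R\Gamma_\setale(-,\ol{\cM}^{\mathrm{gp}})$, and invoke the recollement triangle $j_!j^*\to\mathrm{id}\to i_*i^*$ to show $\ol{\cM}_Y^{\mathrm{gp}}\cong i_*\ol{\cM}_Z^{\mathrm{gp}}$. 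The two hypotheses enter in exactly the same way in both arguments (triviality off $Z$ kills one stalk/one corner of the triangle; strictness identifies the other), so the mathematical content is the same; your packaging via the characteristic sheaf and the localization sequence is slightly more structural and avoids the explicit case analysis on stalks, at the cost of quoting the open/closed recollement for étale sheaves rather than arguing by hand.
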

\begin{proof}
Taking fibers along the horizontal maps yields
\[
R\Gamma_\et
(\underline{Y},\ker(\mathcal{O}_{\underline{Y}}^\times\to \underline{i}_*\mathcal{O}_{\underline{Z}}^\times))
\to
R\Gamma_\setale
(Y,\ker(\mathcal{M}_Y^\mathrm{gp}\to i_*\mathcal{M}_{Z}^\mathrm{gp})).
\]
Hence for every point $y\in Y$,
it suffices to show
\[
\ker(\mathcal{O}_{\underline{Y},y}^\times
\to
(\underline{i}_*\mathcal{O}_{\underline{Z}}^\times)_y)
\cong
\ker(\mathcal{M}_{Y,y}^\mathrm{gp}
\to
(\underline{i}_*\mathcal{M}_{\underline{Z}}^\mathrm{gp})_y).
\]

If $y\notin Z$,
then $(\underline{i}_*\mathcal{O}_{\underline{Z}}^\times)_y$ and $(\underline{i}_*\mathcal{M}_{\underline{Z}}^\mathrm{gp})_y$ vanish,
and $\mathcal{O}_{\underline{Y},y}^\times\cong \mathcal{M}_{Y,y}^\mathrm{gp}$ since $Y-Z$ has a trivial log structure.
This shows the claim.

If $y\in Z$,
then $\cM_{Y,y}/\mathcal{O}_{\underline{Y},y}^\times\cong \cM_{\underline{Z},y}/\mathcal{O}_{\underline{Z},y}^\times$ since $i$ is strict.
Apply the group completion on both sides to conclude.
\end{proof}

Now, we reprove Yamashita's theorem \cite[Theorem 0.1]{Yam11}.

\begin{thm}
\label{Pic.3}
Let $Y$ be a vertical log smooth fs log scheme saturated over $\mathcal{O}_K^\sharp$.
Then a class $x\in \mathrm{Pic}(Y_k;\Q)$ lifts to $\mathrm{Pic}(Y;\Q)$ if and only if the first Hyodo-Kato Chern class $c_1(x)$ is contained in $\mathrm{Fil}^{\geq 1} H_\mathrm{dR}^2(Y_K/K)$ under the Hyodo-Kato equivalence.
\end{thm}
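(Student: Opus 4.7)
The plan is to imitate the proof of Proposition \ref{Pic.1}, replacing Theorem \ref{thm:hkchern2} with the $\KH$-theoretic fiber square of Theorem \ref{rig.5} (which does not require properness) and using the isomorphism $\Pic(Y_K) \cong \Pic(Y)$ of \eqref{Pic.0.1} as a logarithmic substitute for Grothendieck existence.

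For the ``only if'' direction, suppose $\tilde{x} \in \Pic(Y;\Q)$ restricts to $x$. Via \eqref{Pic.0.1}, we identify $\tilde{x}$ with a line bundle on $Y_K$, which is smooth with trivial log structure by the verticality of $Y$ over $\cO_K^\sharp$. Its de Rham first Chern class then lies in $H^1(Y_K,\Omega^1_{Y_K/K}) = \Fil^{\geq 1}H_\dR^2(Y_K/K)$, and it coincides with $c_1(x)$ under $\rho_\pi$ by the commutativity of \eqref{Pic.0.1}.

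For the ``if'' direction, regard $x$ as a class $[x]\in\KH_0(Y_k;\Q)$ via the natural map $\Pic(Y_k) \to \KH_0(Y_k)$. A logarithmic extension of Proposition \ref{Pic.4} identifies the image of $[x]$ under the trace map from Theorem \ref{rig.5} with the exponential Chern character $\exp(c_1(x)) = \sum_{k\geq 0} c_1(x)^k/k!$ inside $\prod_i H^{2i}_\dR(Y_K/K)$, up to the scalar $\mu$. Since the Hodge filtration is multiplicative and $c_1(x) \in \Fil^{\geq 1}$, this image lies in $\prod_i \Fil^{\geq i}H^{2i}_\dR(Y_K/K)$, so Theorem \ref{rig.5} yields a lift $y \in K_0^\cts(Y_K;\Q)$. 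Setting $\tilde{x}:=\det(y)$ and identifying $\Pic(Y_K;\Q) \cong \Pic(Y;\Q)$ via \eqref{Pic.0.1}, the naturality of $\det$ with respect to the specialization map in Theorem \ref{rig.5}, together with the identity $\det([x])=x$ for a log line bundle $x$, yields $\tilde{x}|_{Y_k}=x$.

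The principal obstacle is the logarithmic extension of Proposition \ref{Pic.4}: one must show that the trace-theoretic first Chern character recovers $\exp(c_1(x))$ for \emph{any} $x \in \Pic(Y_k;\Q)$, not only for those coming from $\Pic(\ul{Y_k};\Q)$. As in the proof of Proposition \ref{comp.2}, this should reduce by naturality to a universal computation for a tautological log line bundle, and should follow from the Chern orientation on the logarithmic Hyodo-Kato spectrum of Construction \ref{HK.4} together with the compatibility between $\mathrm{trc}_\HK$ of Construction \ref{comp.1} and this orientation.
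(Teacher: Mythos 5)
Your ``only if'' direction is fine and coincides with the paper's argument. The ``if'' direction, however, contains a genuine gap, and it is exactly the one you flag as ``the principal obstacle'': your route requires (a) a natural cycle-class map $\Pic(Y_k)=H^1_\setale(Y_k,\cM_{Y_k}^{\mathrm{gp}})\to \KH_0(Y_k)$ from the \emph{log} Picard group into Park's log $\KH$, (b) a determinant $\KH_0(Y_k)\to\Pic(Y_k)$ splitting it and compatible with the specialization map of Theorem \ref{rig.5}, and (c) the identification of the trace of such a class with $\exp(c_1(x))$ up to the scalar $\mu$. None of these is established in the paper, and (c) does not reduce ``by naturality to a universal computation for a tautological log line bundle'' in the way Propositions \ref{comp.2} and \ref{Pic.4} do: those arguments exploit that $\Vect_n$ and $\Pic$ of the \emph{underlying} schemes are represented by the smooth Artin stacks $\BGL_{n,k}$ and $\mathrm{B}\mathbb{G}_{m,k}$, so that both sides of the comparison are smooth sheaves and it suffices to evaluate on the classifying stack. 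Log line bundles, i.e.\ classes of $H^1_\setale(-,\cM^{\mathrm{gp}})$, are classified by no such object in $\SmlSm$, so the smooth-descent reduction breaks down at the first step.

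The paper sidesteps all of this with Proposition \ref{Pic.2}, which your proposal does not use: since $Y-Y_k=Y_K$ carries the trivial log structure by verticality, the strict closed immersion $Y_k\to Y$ yields an exact sequence
\[
\Pic(\ul{Y};\Q)\to\Pic(Y;\Q)\oplus\Pic(\ul{Y_k};\Q)\to\Pic(Y_k;\Q)\to 0 .
\]
Writing $x$ as the image of a pair $(y,z)$, the summand $y$ already lives on $Y$, while $z$ is a \emph{classical} line bundle on $\ul{Y_k}$ whose first Chern class lies in $\Fil^{\geq 1}$ (being, up to sign, the difference of $c_1(x)$ and $c_1(y|_{Y_k})$, both of which do). One then lifts $z$ to $\Pic(\ul{Y};\Q)$ by the non-logarithmic Proposition \ref{Pic.1} --- which rests on Theorem \ref{thm:hkchern2}, the genuine Proposition \ref{Pic.4}, and Grothendieck existence --- and assembles the lift of $x$ as $y-u$. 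To keep your route through Theorem \ref{rig.5} you would first have to supply the missing log cycle-class and determinant maps and prove the logarithmic analogue of Proposition \ref{Pic.4}; as written, the proof is incomplete.
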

\begin{proof}
The image of the de Rham first Chern class $c_1\colon \Pic(Y_K)\to H_\dR^2(Y_K/K)$ is contained in $\Fil^{\geq 1}H_\dR^2(Y_K/K)=H_{\et}(Y_K,\Omega_{Y_K/K}^{\geq 1})$ since the de Rham first Chern class is defined using $d\log \colon \cO^\times\to \Omega_{/K}^1$.
Hence the only if direction follows from \eqref{Pic.0.1}.

For the if direction,
consider the exact sequence
\[
\mathrm{Pic}(\underline{Y};\Q)
\to
\mathrm{Pic}(Y;\Q)
\oplus
\mathrm{Pic}(\underline{Y_k};\Q)
\to
\mathrm{Pic}(Y_k;\Q)
\to
0
\]
obtained by Proposition \ref{Pic.2}.
Choose $y\in \mathrm{Pic}(Y;\Q)$ and $z\in \mathrm{Pic}(\underline{Y_k};\Q)$ such that $(y,z)$ maps to $x$.
There exists a lift $w\in \mathrm{Pic}(\underline{Y};\Q)$ of $z$ by Propositions \ref{Pic.4} and \ref{Pic.1}.
Let $u$ be the image of $w$ in $\mathrm{Pic}(Y;\Q)$.
Then $y-u$ is a lift of $x$.
\end{proof}

This also indicates that Theorem \ref{rig.5} is a reasonable $K$-theoretic generalization of \cite[Theorem 0.1]{Yam11}.

\section{Characteristic 0 case}\label{sec.char0}

Let $K$ be an algebraic extension of $\Q$ throughout this section.
Given a smooth proper scheme $X$ over $K[[t]]$ and a class $x\in K_0(X/t)$, Morrow \cite[Theorem 1.1]{Mor14} showed that $x$ lifts to $\lim_r \pi_0 K(X/t^r)$ if and only if the de Rham class belongs to the flat filtration in the sense of \cite[Section 3]{Mor14},
where $X/t^r:=X\otimes_{K[[t]]}K[[t]]/t^r$.
This may be regarded as a characteristic $0$ analogue of \cite{BEK14}.
The purpose of this section is to provide a semistable generalization of this result.

Let $L_\mathrm{cdh}\Omega_{-/K}^q$ denote the cdh sheafification of the presheaf of complexes (not abelian groups) $\Omega_{-/K}^q$ on the opposite category of $K$-algebras.
By \cite[Theorem 3]{HJ} (see \cite[Theorem 1.2 (3)]{HKK}), if $A$ is a smooth $K$-algebra we have a quasi-isomorphism of complexes
\[
\Omega_{A/K}^q
\simeq L_\mathrm{cdh}\Omega_{A/K}^q.
\]

\begin{prop}
\label{A.1}
For every semistable $(K[t],\langle t\rangle)$-algebra $(A,P)$ and integer $q$, $L_\mathrm{cdh}\Omega_{(A/t)/K}^{q}$ is discrete,
and we have an induced homotopy cartesian square
\begin{equation}
\label{A.1.1}
\begin{tikzcd}
\Omega_{A/K}^q\ar[d]\ar[r]&
L_\mathrm{cdh}\Omega_{(A/t)/K}^{q}\ar[d]
\\
\Omega_{(A,P)/K}^q\ar[r]&
\Omega_{(A/t,P)/K}^q.
\end{tikzcd}
\end{equation}
\end{prop}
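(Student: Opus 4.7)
The plan is to verify both claims by a strict \'etale local computation. By the \'etale-local structure of semistable $(K[t],\langle t\rangle)$-algebras, I may assume that $(A,P)$ admits a chart of the form $P=\N^i$ with $A=K[t,x_1,\dots,x_n]/(x_1\cdots x_i-t)$ and $e_j\mapsto x_j$. Then $A/t=K[x_1,\dots,x_n]/(x_1\cdots x_i)$ is a strict normal crossings scheme whose irreducible components $Z_j:=V(x_j)$ and iterated intersections $Z_J:=\bigcap_{j\in J}Z_j$, for $J\subseteq\{1,\dots,i\}$, are all smooth over $K$.

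For the discreteness of $L_\cdh\Omega^q_{(A/t)/K}$, the normalization $\coprod_{j=1}^{i}Z_j\to\Spec(A/t)$ is a proper cdh cover, and cdh descent identifies $L_\cdh\Omega^q_{(A/t)/K}$ with the totalization of the \v{C}ech-type complex
\[
\bigoplus_{|J|=1}\Omega^q_{Z_J/K}\to\bigoplus_{|J|=2}\Omega^q_{Z_J/K}\to\bigoplus_{|J|=3}\Omega^q_{Z_J/K}\to\cdots.
\]
Each $\Omega^q_{Z_J/K}$ is discrete since $Z_J$ is smooth (by the Huber--J\"order comparison recalled above), so $L_\cdh\Omega^q_{(A/t)/K}$ a priori sits in non-negative cohomological degrees. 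Discreteness then reduces to the vanishing of higher \v{C}ech cohomology, which follows by decomposing the complex along the monomial basis: a pure monomial $x_1^{a_1}\cdots x_n^{a_n}\,dx_T$ contributes to $\Omega^q_{Z_J/K}$ precisely when $J$ is disjoint from the set of its active variables $\{j : a_j>0\}\cup T$, so its contribution to the \v{C}ech complex is (up to a degree shift) the simplicial chain complex of a simplex on the complementary index set in $\{1,\dots,i\}$, which has no cohomology above degree zero.

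Once discreteness is in hand, \eqref{A.1.1} being homotopy cartesian reduces to the assertion that the induced map
\[
\Omega^q_{A/K}\to\Omega^q_{(A,P)/K}\times_{\Omega^q_{(A/t,P)/K}}L_\cdh\Omega^q_{(A/t)/K}
\]
is an isomorphism of discrete $A$-modules. Using the relation $dx_j=x_j\,d\log x_j$ for $j\leq i$, all four corners admit explicit monomial bases in the generators $d\log x_j$ ($j\leq i$) and $dx_k$ ($k>i$); the vertical maps incorporate these $d\log$-relations, while the horizontal maps combine reduction mod $t$ with passage to the normalization. Bijectivity is verified directly: a compatible pair $(H,f)$ in the pullback imposes, for each $d\log x_j$-coefficient, a congruence $\bar{H}_j=x_j f_j$ in $A/t$; since $x_j x_k\in (t)$ for distinct $j,k\in\{1,\dots,i\}$, this forces $H_j\in x_j A$, yielding a unique lift $F_j:=H_j/x_j\in A$ and hence a unique preimage in $\Omega^q_{A/K}$.

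The main obstacle I anticipate is the discreteness step, which requires a careful bookkeeping of which monomials contribute to which strata and the reduction of each monomial-piece to the (acyclic) chain complex of a simplex. Once this is set up, the cartesianness reduces to the transparent linear-algebra verification above, made clean by the explicit monomial bases on the four corners.
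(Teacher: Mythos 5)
Your proposal is sound and runs parallel to the paper's argument---both reduce to the standard chart $A=K[x_1,\dots,x_n]$ with $t=x_1\cdots x_d$, identify $L_\cdh\Omega_{(A/t)/K}^{q}$ with the homotopy limit of $\Omega^q$ over the closed strata of the normal crossings special fiber, and finish with an explicit monomial computation---but the logical order is reversed. You prove discreteness \emph{first}, by decomposing the strata complex along monomials into (acyclic) simplicial cochain complexes of simplices, and then treat cartesianness as an elementary statement about discrete modules. The paper instead first proves that $\Omega_{(A/t,P)/K}^{*}\simeq\lim_{I\neq\emptyset}\Omega_{(A_I,P)/K}^{*}$ (its claim \eqref{A.1.2}), which is what produces the right-hand vertical map of \eqref{A.1.1} in the first place; it then checks cartesianness by observing that the horizontal maps are surjective with isomorphic kernels $(x_{q+1}\cdots x_n)\,dx_1\cdots dx_q\xrightarrow{\sim}(x_1\cdots x_n)\tfrac{dx_1}{x_1}\cdots\tfrac{dx_q}{x_q}$, and only at the very end deduces discreteness of $L_\cdh\Omega_{(A/t)/K}^{q}$ from the other three corners being discrete. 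The paper's route gets discreteness for free; yours costs the simplex-acyclicity bookkeeping but makes the fiber-product verification completely explicit.

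Two points need repair. First, you never actually construct the right-hand vertical map: $\Omega_{(-,P)/K}^{q}$ is not a priori a cdh sheaf, so the natural map $\Omega_{(A/t)/K}^{q}\to\Omega_{(A/t,P)/K}^{q}$ does not automatically factor through the cdh sheafification. You need the analogue of \eqref{A.1.2}, namely that $\Omega_{(A/t,P)/K}^{q}$ is itself the limit of $\Omega_{(A_I,P)/K}^{q}$ over the strata, so that the desired map is induced termwise by $\Omega_{A_I/K}^{q}\to\Omega_{(A_I,P)/K}^{q}$; this step should be stated and proved (it is again a monomial computation). Second, the justification ``since $x_jx_k\in(t)$ for distinct $j,k\in\{1,\dots,i\}$'' is false once $i\geq 3$ (e.g.\ $x_1x_2\notin(x_1x_2x_3)$). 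What you actually need---and what does hold---is that $x_J:=\prod_{j\in J}x_j$ divides $t=x_1\cdots x_i$ for every $J\subseteq\{1,\dots,i\}$, so that the congruence $\bar{H}_J=x_Jf_J$ in $A/t$ gives $H_J-x_J\tilde{f}_J\in tA\subseteq x_JA$, whence $H_J\in x_JA$ and the lift $F_J=H_J/x_J$ exists and is unique. With these two corrections your argument goes through.
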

\begin{proof}
Let $\mathrm{Spec}(A_1),\ldots,\mathrm{Spec}(A_m)$ be the irreducible components of $\mathrm{Spec}(A/t)$.
For a nonempty subset $I:=\{i_1,\ldots,i_s\}\subset \{1,\ldots,m\}$,
we set $A_I:=A_{i_1}\otimes_A \cdots \otimes_A A_{i_s}$.
We claim
\begin{equation}
\label{A.1.2}
\Omega_{(A/t,P)/K}^*
\simeq
\lim_{I\subset \{1,\ldots,m\},I\neq \emptyset}
\Omega_{(A_I,P)/K}^*,
\end{equation}
where $\lim$ is the homotopy limit.
This question is Zariski local on $A$,
so we may assume that there exists a strict \'etale map of $(K[t],\langle t\rangle)$-algebras
\[
(K[x_1,\ldots,x_n],\langle x_1,\ldots,x_d\rangle)\to (A,P)
\]
for some integers $0\leq d\leq n$.
By taking out the part $\Omega_{K[x_{d+1},\ldots,x_n]/K}^*$,
we reduce to the case where $d=n$.
In this case,
we set $A_i:=K[x_1,\ldots,x_n]/(x_i)$.
For a nonempty subset $I:=\{i_1,\ldots,i_s\}\subset \{1,\ldots,n\}$,
the coefficient of $\frac{dx_1}{x_1}\cdots \frac{dx_q}{x_q}$ with an integer $q\geq 0$ in $\Omega_{(A_I,P)/K}^*$ is
\[
K[x_1,\ldots,x_n]/(x_{i_1},\ldots, x_{i_s})
\]
and in $\Omega_{(A/t,P)/K}^*$ is
\[
K[x_1,\ldots,x_n]/(x_1\cdots x_n).
\]
Hence without loss of generality,
it suffices to show
\[
K[x_1,\ldots,x_n]/(x_1\cdots x_n)
\simeq
\lim_{I=\{i_1,\ldots,i_s\}\subset \{1,\ldots,n\},I\neq \emptyset}
K[x_1,\ldots,x_n]/(x_{i_1},\ldots, x_{i_s}).
\]
This is simple algebra.

As a consequence,
we obtain a natural map
\[
L_\mathrm{cdh}\Omega_{(A/t)/K}^{q}
\to
\Omega_{(A/t,P)/K}^q
\]
since $L_\mathrm{cdh}\Omega_{(A/t)/K}^{q}$ is equivalent to the homotopy limit
\[
\lim_{I\subset \{1,\ldots,m\},I\neq \emptyset}
\Omega_{A_I/K}^*.
\]
This constructs \eqref{A.1.1}.

Next, we claim that \eqref{A.1.1} is homotopy cartesian.
This question is Zariski local on $A$.
As above,
we reduce to the case where $(A,P)=(K[x_1,\ldots,x_n],\langle x_1,\ldots,x_n\rangle)$.
For a nonempty subset $I:=\{i_1,\ldots,i_s\}\subset \{1,\ldots,n\}$,
the coefficient of $dx_1\cdots dx_q$ in $\Omega_{A_I/K}^{q}$ is isomorphic to $K[x_1,\ldots,x_n]/(x_{i_1},\ldots,x_{i_s})$ if $I\subset \{q+1,\ldots,n\}$ and $0$ otherwise.
Hence the coefficient of $dx_1\cdots dx_q$ in $L_\mathrm{cdh}\Omega_{(A/t)/K}^{q}$ is equivalent to the homotopy limit
\[
\lim_{I=\{i_1,\ldots,i_s\}\subset \{q+1,\ldots,n\},I\neq \emptyset}
K[x_1,\ldots,x_n]/(x_{i_1},\ldots,x_{i_s}),
\]
which is equivalent to $K[x_1,\ldots,x_n]/(x_{q+1}\cdots x_n)$ as above.
Therefore,
without loss of generality,
it suffices to show that the square
\[
\begin{tikzcd}[column sep=small]
K[x_1,\ldots,x_n]\,dx_1 \cdots dx_q
\ar[d]\ar[r]&
K[x_1,\ldots,x_n]/(x_{q+1} \cdots x_n) \,dx_1 \cdots dx_q\ar[d]
\\
K[x_1,\ldots,x_n] \frac{dx_1}{x_1}\cdots  \frac{dx_q}{x_q}\ar[r]&
K[x_1,\ldots,x_n]/(x_1\cdots x_n) \frac{dx_1}{x_1}\cdots  \frac{dx_q}{x_q}
\end{tikzcd}
\]
is homotopy cartesian.
The horizontal maps are surjective,
so it suffices to show that the induced map of the kernels
\[
(x_{q+1}\cdots x_n)\, dx_1\cdots dx_q
\to
(x_1\cdots x_n)\tfrac{dx_1}{x_1}\cdots \tfrac{dx_q}{x_q}
\]
is an isomorphism,
which is clear.

Finally,
we deduce that $L_\mathrm{cdh}\Omega_{(A/t)/K}^{q}$ is discrete since the other three corners of \eqref{A.1.1} are discrete.
\end{proof}

For an inverse system $\cdots \to X_1\to X_0$ in a category or $\infty$-category,
let $\{X_r\}$ denote the associated pro-object.

\begin{prop}
\label{A.2}
For every semistable $(K[t],\langle t\rangle)$-algebra $(A,P)$ and integer $q$,
we have an induced homotopy cartesian square
\[
\begin{tikzcd}
\{\Omega_{(A/t^r)/K}^q\}_r\ar[d]\ar[r]&
L_\mathrm{cdh}\Omega_{(A/t)/K}^{q}\ar[d]
\\
\{\Omega_{(A/t^r,P)/K}^q\}_r\ar[r]&
\Omega_{(A/t,P)/K}^q.
\end{tikzcd}
\]
\end{prop}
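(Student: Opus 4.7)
The proof of Proposition A.2 will follow the same blueprint as the proof of Proposition A.1, but now upgraded to the pro-setting. The first observation is that the right column of the square of Proposition A.2 is the same as the right column of the square of Proposition A.1, so that we may regard Proposition A.2 as the pro-version of A.1 obtained by replacing $A$ with the pro-system $\{A/t^r\}_r$ in the left column. Working Zariski-locally on $A$ (all four corners are strict smooth sheaves) and using the local structure theorem for semistable algebras as in the proof of Proposition A.1, we reduce to the standard form $(A,P)=(K[x_1,\ldots,x_n],\langle x_1,\ldots,x_n\rangle)$ with $t=x_1\cdots x_n$.

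In this local form, we will exploit the explicit presentations
\[
\Omega^q_{(A/t^r,P)/K}\cong \Omega^q_{(A,P)/K}\otimes_A A/t^r,
\qquad
\Omega^q_{(A/t^r)/K}\cong \Omega^q_{A/K}/\bigl(t^r\Omega^q_{A/K}+d(t^r)\wedge \Omega^{q-1}_{A/K}\bigr).
\]
The first identity follows from the fact that $d(t^r)=rt^r\,d\log t$ already lies in $t^r\Omega^q_{(A,P)/K}$. Note in particular that $\{\Omega^q_{(A/t^r,P)/K}\}_r$ is exactly the pro-system of quotients $\{\Omega^q_{(A,P)/K}/t^r\Omega^q_{(A,P)/K}\}_r$ defining the $t$-adic filtration on log differentials.

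Next, we verify that the horizontal maps of the square are levelwise surjective (for the bottom row this is immediate from the presentation; for the top row one uses the surjectivity established in the proof of A.1 together with the reduction $\Omega^q_{(A/t^r)/K}\twoheadrightarrow \Omega^q_{(A/t)/K}$). Hence the square is cartesian in pro-systems if and only if the induced map on pro-kernels of the horizontal arrows is a pro-isomorphism. The bottom pro-kernel equals $\{t\,\Omega^q_{(A,P)/K}/t^r\Omega^q_{(A,P)/K}\}_r$, and the top pro-kernel at each level $r$ consists of those classes in $\Omega^q_{(A/t^r)/K}$ vanishing in $L_\mathrm{cdh}\Omega^q_{(A/t)/K}$. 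By a coefficient-wise computation (decomposing along wedges $dx_I$ and $d\log x_I$ exactly as in the proof of A.1), the left vertical map acts via multiplication by $x_I=x_{i_1}\cdots x_{i_q}$, and we must check that this multiplication sends the top pro-kernel isomorphically onto the bottom pro-kernel at each coefficient.

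The main obstacle is that $\Omega^q_{(A/t^r)/K}$ is not a direct sum of its $dx_I$-coefficients, because the relations coming from $d(t^r)\wedge \Omega^{q-1}_{A/K}$ mix different $I$'s. Concretely, each such relation is of the form $r\sum_{i\notin J}\pm(t^r/x_i)\,dx_{\{i\}\sqcup J}=0$ for $|J|=q-1$. The crucial point that makes the argument work is that each summand $(t^r/x_i)$ lies in $\mathrm{ann}_{A/t^r}(x_i)\subset \mathrm{ann}_{A/t^r}(x_{\{i\}\sqcup J})$, so these mixing relations live entirely inside the kernel of the left vertical map; moreover their image under the vertical multiplication by $x_I$ lands in $t^r\Omega^q_{(A,P)/K}$, which is zero in $\Omega^q_{(A/t^r,P)/K}$. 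Thus they are exactly the relations required to match the bottom pro-kernel $\{t\Omega^q_{(A,P)/K}/t^r\}_r$ under the multiplication by $x_I$, completing the pro-isomorphism and hence the cartesianness of the square.
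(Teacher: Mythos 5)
Your overall strategy (reduce to the standard local model, exploit levelwise surjectivity of the horizontal maps, and compare kernels coefficient-wise) matches the paper's, and your levelwise identification $\Omega^q_{(A/t^r,P)/K}\cong\Omega^q_{(A,P)/K}\otimes_A A/t^r$ is correct. However, the decisive step --- verifying that the induced map of pro-kernels is a pro-isomorphism --- is asserted rather than proved, and the levelwise claim you substitute for it is false. Take $n=2$, $q=1$, so $A=K[x,y]$, $t=xy$, and $r\geq 2$. The class $\omega_r=x^r y^{r-1}\,dy$ is nonzero in $\Omega^1_{(A/t^r)/K}$ (it does not lie in the submodule generated by $t^r\,dx$, $t^r\,dy$ and $x^{r-1}y^r\,dx+x^r y^{r-1}\,dy$), it vanishes in $L_{\mathrm{cdh}}\Omega^1_{(A/t)/K}$, and it maps to $x^r y^r\,d\log y=0$ under the left vertical map. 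Hence $\ker(\mathrm{top})\to\ker(\mathrm{bottom})$ is \emph{not} injective at any finite level; it is only a pro-isomorphism because such classes are annihilated by the transition maps, i.e.\ the kernel of the kernel-map is Mittag-Leffler zero. Your sentence claiming the relations $d(t^r)\wedge\Omega^{q-1}_{A/K}$ are ``exactly the relations required to match the bottom pro-kernel'' elides precisely this point, and you also never verify pro-surjectivity of the kernel map.

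The paper sidesteps the bookkeeping with the mixing relations entirely: it first invokes Morrow's observation that $d(t^{2r})\subset t^r\Omega^1_{A/K}$, which yields a pro-isomorphism $\{\Omega^q_{(A/t^r)/K}\}_r\cong\{\Omega^q_{A/K}\otimes_A A/t^r\}_r$ (and similarly in the log case). After this replacement the top-left corner genuinely splits into $dx_I$-coefficients, and the kernel map becomes, coefficient by coefficient, multiplication by $x_1\cdots x_q$ from $(x_{q+1}\cdots x_n)/(x_1^r\cdots x_n^r)$ to $(x_1\cdots x_n)/(x_1^r\cdots x_n^r)$, which is surjective with Mittag-Leffler-zero kernel. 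To repair your argument you must either perform this pro-replacement first, or else carry out the Mittag-Leffler analysis directly on your presentation of $\Omega^q_{(A/t^r)/K}$; as written, the proof does not go through.
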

\begin{proof}
As noted in \cite[Proof of Theorem 2.1]{Mor14},
we have
\[
\{\Omega_{(A/t^r)/K}^q\}
\cong
\{\Omega_{A/K}^q\otimes_A A/t^r\}
\]
as a consequence of $d(t^{2r})\subset t^r \Omega_{A/K}^1$.
We have a similar isomorphism for the case of $(A,P)$ too.
Hence it suffices to construct an induced homotopy cartesian square
\[
\begin{tikzcd}
\{\Omega_{A/K}^q\otimes_A A/t^r\}_r\ar[d]\ar[r]&
L_\mathrm{cdh}\Omega_{(A/t)/K}^{q}\ar[d]
\\
\{\Omega_{(A,P)/K}^q \otimes_A A/t^r\}_r\ar[r]&
\Omega_{(A/t,P)/K}^q.
\end{tikzcd}
\]
Arguing as in Proposition \ref{A.1.1},
it suffices to show that the square
\[
\begin{tikzcd}[column sep=small]
\{K[x_1,\ldots,x_n]/(x_1^r\cdots x_n^r)\,dx_1 \cdots dx_q\}_r
\ar[d]\ar[r]&
K[x_1,\ldots,x_n]/(x_{q+1} \cdots x_n) \,dx_1 \cdots dx_q\ar[d]
\\
\{K[x_1,\ldots,x_n]/(x_1^r\cdots x_n^r) \frac{dx_1}{x_1}\cdots  \frac{dx_q}{x_q}\}_r\ar[r]&
K[x_1,\ldots,x_n]/(x_1\cdots x_n) \frac{dx_1}{x_1}\cdots  \frac{dx_q}{x_q}
\end{tikzcd}
\]
is homotopy cartesian.
The horizontal maps are surjective,
so it suffices to show that the induced map of the kernels
\[
\{(x_{q+1}\cdots x_n)/(x_1^r\cdots x_n^r)\, dx_1\cdots dx_q\}_r
\to
\{(x_1\cdots x_n)/(x_1^r\cdots x_n^r)\tfrac{dx_1}{x_1}\cdots \tfrac{dx_q}{x_q}\}_r
\]
is an isomorphism of pro-abelian groups.
This holds since the cokernel vanishes and the kernel is Mittag-Leffler zero.
\end{proof}

Let $(A,P)$ be a semistable $(K[t],\langle t\rangle)$-algebra.
Consider the cdh sheafification of the HKR decomposition of $\mathrm{HC}^-(-)$ and use the assumption that $K$ is an algebraic extension of $\Q$ as in \cite[Proof of Theorem 3.2]{Mor14}
to obtain
\begin{equation}
\label{A.3.1}
L_\mathrm{cdh}\mathrm{HC}^-(A/t)
\simeq
\bigoplus_{p\in \Z} L_\mathrm{cdh} \Omega_{(A/t)/K}^{\geq p}[2p].
\end{equation}
We have the composite
\begin{align*}
\mathrm{trc}_\mathrm{dR}
\colon
\mathrm{KH}(A/t)
\xrightarrow{\mathrm{trc}} &
L_\mathrm{cdh}\mathrm{HC}^-(A/t)
\\
\xrightarrow{\simeq} &
\bigoplus_{p\in \Z} L_\mathrm{cdh} \Omega_{(A/t)/K}^{\geq p}[2p]
\to
\bigoplus_{p\in \Z} \Omega_{(A/t,P)/K}^{\geq p}[2p],
\end{align*}
see \cite[(4.1)]{EM23} for $\mathrm{trc}$.

Let $X$ be a semistable fs log scheme over $(K[[t]],\langle t\rangle)$ in the sense that $X$ is Zariski locally of the form $\mathrm{Spec}(A\otimes_{K[t]}K[[t]])$ for some semistable $(K[t],\langle t\rangle)$-algebra $(A,P)$.
After taking strict \'etale sheafification,
we obtain
\[
\mathrm{trc}_\mathrm{dR}
\colon
\mathrm{KH}(\underline{X}/t)
\to
\bigoplus_{p\in \Z}
R\Gamma_\setale(X/t,\Omega_{(X/t)/K}^{\geq p})[2p],
\]
where $X/t^r:=X\otimes_{K[[t]]}K[[t]]/t^r$ for every integer $r\geq 1$.
After taking $\pi_0$,
we also obtain
\[
\mathrm{trc}_\mathrm{dR}
\colon
\pi_0\mathrm{KH}(\underline{X}/t)
\to
\bigoplus_{p\in \Z}
H^{2p}_\setale((X/t)/K,\Omega_{(X/t)/K}^{\geq p}).
\]

\begin{prop}
\label{A.3}
Let $X$ be a semistable fs log scheme over $(K[[t]],\langle t\rangle)$.
Then for every integer $r\geq 1$,
there exists a natural isomorphism of pro-abelian groups
\[
\{\pi_n\mathrm{HC}^-(\underline{X}/t^r)\}_r
\cong
\{H^{2p-n}_\et(\underline{X},\bigoplus_{p\in \Z} \Omega^{\geq p}_{(\underline{X}/t^r)/K})\}_r.
\]
\end{prop}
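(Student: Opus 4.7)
My plan is to combine the Adams-operation splitting of the HKR filtration on $\mathrm{HC}^-$, valid over any $\Q$-algebra, with the pro-analysis of differential forms already carried out in Propositions \ref{A.1} and \ref{A.2}. I would work strict \'etale locally on $\underline{X}$ and reduce to the affine case of a semistable $(K[t],\langle t\rangle)$-algebra $(A,P)$, after which the question is to compute the pro-system $\{\pi_n\mathrm{HC}^-((A/t^r)/K)\}_r$.

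First I would invoke the rational HKR decomposition: since $K$ is an algebraic extension of $\Q$, for each $r\geq 1$ the Adams operations split the HKR filtration on negative cyclic homology, yielding a natural equivalence
\[
\mathrm{HC}^-((A/t^r)/K)\simeq \prod_{p\in\Z} L\Omega^{\geq p}_{(A/t^r)/K}[2p],
\]
exactly as in \cite[Proof of Theorem 3.2]{Mor14} (this is the unsheafified counterpart of \eqref{A.3.1}, and the cdh-sheafification is not needed when a direct computation is possible).

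Next I would argue that at the level of pro-objects in $r$, the derived Hodge-filtered complex $L\Omega^{\geq p}_{(A/t^r)/K}$ is equivalent to the ordinary truncation $\Omega^{\geq p}_{(A/t^r)/K}$. Since $A$ is smooth over $K$ locally (the semistable local model $K[x_1,\ldots,x_n]$ with $t=x_1\cdots x_d$ is smooth as a $K$-algebra, even though $A/t^r$ is not), one has $L\Omega_{A/K}\simeq \Omega_{A/K}$. The derived corrections that distinguish $L\Omega^{\geq p}_{(A/t^r)/K}$ from $\Omega^{\geq p}_{A/K}\otimes_A A/t^r$ are controlled by terms built from $t^r/t^{2r}$-type tensor factors of the cotangent complex, and these form Mittag-Leffler zero pro-systems as in \cite[Proof of Theorem 2.1]{Mor14}. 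Combining this with the identification $\{\Omega^q_{(A/t^r)/K}\}_r\cong \{\Omega^q_{A/K}\otimes_A A/t^r\}_r$ invoked in the proof of Proposition \ref{A.2} (a consequence of $d(t^{2r})\subset t^r\Omega^1_{A/K}$), we obtain the pro-equivalence $\{L\Omega^{\geq p}_{(A/t^r)/K}\}_r\simeq \{\Omega^{\geq p}_{(A/t^r)/K}\}_r$.

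Then I would strict \'etale sheafify to pass from $(A,P)$ to the global log scheme $X$, obtaining
\[
\mathrm{HC}^-(\underline{X}/t^r)\simeq \prod_{p\in\Z} R\Gamma_\et(\underline{X},\Omega^{\geq p}_{(\underline{X}/t^r)/K})[2p]
\]
compatibly with $r$. Taking $\pi_n$ and using that $\underline{X}$ is finite-dimensional (so only finitely many $p$ contribute to a fixed degree), the product becomes a finite direct sum and yields the asserted pro-isomorphism. The main obstacle is the second step: the pointwise identification of derived and ordinary Hodge-filtered complexes fails for each fixed $r$ since $A/t^r$ is genuinely non-smooth, so the argument is inherently pro-theoretic, controlling the derived cotangent contributions by terms that become Mittag-Leffler zero in $r$, uniformly across the semistable local models and compatibly with the de Rham differential.
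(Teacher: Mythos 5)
Your argument is correct and is essentially the same as the paper's: the paper simply cites \cite[Corollary 2.2]{Mor14} for $K=\Q$ and reduces the general case to this via the algebraicity of $K$ over $\Q$, and Morrow's proof of that corollary is precisely the Adams-operation splitting of the HKR filtration on $\mathrm{HC}^-$ combined with the pro-theoretic identification of derived and ordinary Hodge-filtered de Rham complexes via Mittag--Leffler vanishing that you spell out. In effect you have unpacked the cited result rather than taken a different route.
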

\begin{proof}
If $K=\Q$,
then this is a special case of \cite[Corollary 2.2]{Mor14}.
The general case of $K$ follows from the assumption that $K$ is an algebraic extension of $\Q$.
\end{proof}

\begin{lem}
\label{A.7}
Let
\[
\begin{tikzcd}
\{A_r\}_r\ar[d]\ar[r]&
B\ar[d]
\\
\{C_r\}_r\ar[r]&
D
\end{tikzcd}
\]
be a commutative square of pro-$K$-complexes,
where $A_r$, $B$, $C_r$, and $D$ are $K$-complexes.
Assume that $\{\pi_* X_r\}_r\to \{\pi_* Y_r\}_r$ is an isomorphism of pro-$K$-vector spaces,
where $X_r:=\mathrm{cofib}(A_r\to B)$ and $Y_r:=\mathrm{cofib}(C_r\to D)$.
Assume also that $\pi_i D$ is finite dimensional for every $i$ and there is an isomorphism of pro-$K$-vector spaces $\{C_r\}_r \cong \{C_r'\}_r$ such that each $\pi_i C_r'$ is finite dimensional for every $i$.
Then the induced commutative diagram
\[
\begin{tikzcd}
\lim_r \pi_* A_r\ar[d]\ar[r]&
\pi_* B\ar[d]\ar[r]&
\lim_r \pi_{*-1} X_r\ar[d,"\cong"]
\\
\lim_r \pi_* C_r\ar[r]&
\pi_* D\ar[r]&
\lim_r \pi_{*-1} Y_r
\end{tikzcd}
\]
has exact rows.
\end{lem}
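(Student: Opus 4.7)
The plan is to derive the exactness of the rows from the six-term $\lim$-$\lim^1$ exact sequence, applied to the long exact sequences in homotopy obtained from the cofiber sequences $A_r \to B \to X_r$ and $C_r \to D \to Y_r$. For each $r$ we obtain a long exact sequence
$$\cdots \to \pi_{*+1} X_r \to \pi_* A_r \to \pi_* B \to \pi_* X_r \to \pi_{*-1} A_r \to \cdots$$
and similarly for the bottom row, connected by the map of cofiber sequences induced by the given square. Applying $\lim_r$, the obstruction to preserving exactness lies in $\lim^1$ of the various towers. It therefore suffices to show $\lim^1 = 0$ on each of the towers $\{\pi_i A_r\}_r$, $\{\pi_i C_r\}_r$, $\{\pi_i X_r\}_r$, $\{\pi_i Y_r\}_r$.

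The key input is that a tower of finite-dimensional $K$-vector spaces is automatically Mittag-Leffler and hence has $\lim^1 = 0$, and that $\lim$ and $\lim^1$ are pro-invariant. Consequently the hypothesis that $\{\pi_i C_r\}_r$ is pro-isomorphic to $\{\pi_i C_r'\}_r$ with $\pi_i C_r'$ finite-dimensional gives $\lim^1 \{\pi_i C_r\}_r = 0$ for every $i$. The assumed pro-isomorphism $\{\pi_* X_r\}_r \cong \{\pi_* Y_r\}_r$ then reduces the vanishing for $\{\pi_i X_r\}_r$ to that for $\{\pi_i Y_r\}_r$.

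To handle $\{\pi_i Y_r\}_r$, I would split the long exact sequence $\pi_i C_r \to \pi_i D \to \pi_i Y_r \to \pi_{i-1} C_r \to \pi_{i-1} D$ into the short exact sequence of towers
$$0 \to \pi_i D / \mathrm{im}(\pi_i C_r \to \pi_i D) \to \pi_i Y_r \to \ker(\pi_{i-1} C_r \to \pi_{i-1} D) \to 0.$$
The first tower consists of quotients of the constant finite-dimensional $\pi_i D$, hence has $\lim^1 = 0$. For the kernel tower, note that a map in the pro-category to a constant object factors through a single stage of the source tower; using this, the pro-iso $\{\pi_{i-1} C_r\}_r \cong \{\pi_{i-1} C_r'\}_r$ identifies the kernel tower, up to pro-iso, with $\{\ker(\pi_{i-1} C_r' \to \pi_{i-1} D)\}_r$, a tower of finite-dimensional vector spaces having $\lim^1 = 0$. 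The six-term exact sequence applied to the displayed short exact sequence of towers then gives $\lim^1 \{\pi_i Y_r\}_r = 0$. An analogous argument using the long exact sequence for $A_r \to B \to X_r$, combined with the vanishing for $\{\pi_i X_r\}_r$, yields $\lim^1 \{\pi_i A_r\}_r = 0$.

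With all these vanishings, applying $\lim$ to the long exact sequences preserves exactness, and the displayed diagram has exact rows; the right-most vertical map is an isomorphism by pro-invariance of $\lim$ together with the assumed pro-iso $\{\pi_* X_r\}_r \cong \{\pi_* Y_r\}_r$. The main obstacle is the pro-categorical reduction for the kernel tower: showing that a pro-isomorphism transports the kernel of a map to a constant finite-dimensional object to a tower of finite-dimensional kernels with vanishing $\lim^1$. Once this is granted, the rest is bookkeeping with the six-term exact sequence.
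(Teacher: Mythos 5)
Your strategy is essentially the paper's: reduce everything to Mittag--Leffler/${\lim}^1$-vanishing for towers that are pro-isomorphic to towers of finite-dimensional $K$-vector spaces, then combine the Milnor sequence with left-exactness of $\lim$. Your treatment of $\{\pi_iY_r\}_r$ --- splitting off the quotient of the constant finite-dimensional $\pi_iD$ and transporting the kernel tower along $\{C_r\}\cong\{C'_r\}$, using that a map of pro-objects to a constant object is represented at a finite stage --- is correct, and is what the paper does when it produces the finite-dimensional tower $\{U_r\}\cong\{\pi_iY_r\}$.

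One step is not justified as written: the ``analogous argument'' for ${\lim}^1\{\pi_iA_r\}_r=0$. In the $Y$-case the kernel term $\{\ker(\pi_{i-1}C_r\to\pi_{i-1}D)\}_r$ sits inside the pro-finite-dimensional tower $\{\pi_{i-1}C_r\}_r$, which is precisely what lets you replace it by a tower of finite-dimensional kernels. In the $A$-case the corresponding term of the short exact sequence is $\{\ker(\pi_iB\to\pi_iX_r)\}_r$, a decreasing tower of subspaces of the constant, possibly infinite-dimensional, $\pi_iB$; such towers can have nonzero ${\lim}^1$ (e.g.\ $\{x^rK[x]\}_r\subset K[x]$), and no hypothesis of the lemma rules this out. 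Fortunately you do not need ${\lim}^1\{\pi_iA_r\}_r=0$ at all: exactness of the top row at $\pi_iB$ requires only that $\lim_r\pi_iA_r\to\lim_r\mathrm{im}(\pi_iA_r\to\pi_iB)$ be surjective, i.e.\ that ${\lim}^1$ of $\{\ker(\pi_iA_r\to\pi_iB)\}_r=\{\mathrm{im}(\pi_{i+1}X_r\to\pi_iA_r)\}_r$ vanish. That tower is a levelwise quotient of $\{\pi_{i+1}X_r\}_r$, and ${\lim}^1$-vanishing (indeed the Mittag--Leffler property) passes to quotient towers because ${\lim}^2=0$ for $\N$-indexed towers; so your vanishing for $\{\pi_*X_r\}_r$ already suffices. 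This image tower is exactly the one the paper's proof targets directly via the finite-dimensional images $\mathrm{im}(\alpha_r)$. Remove or repair that one step and your argument is complete.
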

\begin{proof}
This is just a rephrasing of \cite[Part of the proof of Theorem 3.2]{Mor14}.
The assumption on $\pi_i C_r$ and $\pi_i D$ implies that the lower row is exact and there exists an isomorphism of pro-$K$-vector spaces $\{U_r\}_r\cong \{\pi_iY_r\}_r$ such that each $U_r$ is finite dimensional.
Choose an isomorphism of pro-abelian groups $\{\pi_i A_r\}\cong \{V_r\}_r$ with maps $\alpha_r\colon U_r\to V_r$ for all $r$ and a commutative square
\[
\begin{tikzcd}
\{\pi_i X_r\}_r \ar[d,"\cong"']\ar[r]&
\{\pi_i A_r\}_r\ar[d,"\cong"]
\\
\{U_r\}_r\ar[r]&
\{V_r\}_r.
\end{tikzcd}
\]
Since $\mathrm{im}(\alpha_r)$ is finite dimensional, $\{\mathrm{im}(\alpha_r)\}_r$ is Mittag-Leffler.
Using the Milnor sequence,
we see that
\[
0
\to
\lim_r \mathrm{im}(\alpha_r)
\to
\lim_r \pi_i A_r
\to
\lim_r \mathrm{im}(\pi_i A_r\to \pi_iB)
\to
0
\]
is exact.
Also,
\[
0\to \lim_r \mathrm{im}(\pi_i A_r\to \pi_iB)
\to
\pi_i B
\to
\lim_r \pi_{i-1}X_r
\]
is exact since $\lim_r$ is left exact.
Combine these two exact sequences to finish the proof.
\end{proof}

In view of \cite[Lemma 3.1]{Mor14},
the following result can be considered as a semistable generalization of \cite[Theorem 1.1]{Mor14}.

\begin{thm}
\label{A.5}
Let $X$ be a proper semistable fs log scheme over $(K[[t]],\langle t\rangle)$,
and let $x\in \pi_0\mathrm{KH}(\underline{X}/t)$.
Then the following conditions are equivalent:
\begin{enumerate}
\item[\textup{(i)}]
$x$ lifts to $\lim_r \pi_0 K(\underline{X}/t^r)$.
\item[\textup{(ii)}]
$\mathrm{trc}_\mathrm{dR}(x)$ lifts to $\bigoplus_{p\in \Z}\lim_r H_\setale^{2p}((X/t^r)/K,\Omega_{(X/t^r)/K}^{\geq p})$.
\end{enumerate}
\end{thm}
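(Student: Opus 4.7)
The plan is to adapt Morrow's argument for the smooth case \cite{Mor14}, replacing the classical HKR input with its logarithmic refinement and the pro-K\"ahler calculation with the homotopy cartesian squares of Propositions \ref{A.1}, \ref{A.2} and the pro-HKR identification of Proposition \ref{A.3}. The key technical device bridging the pro-$K$-theoretic question with the (log) de Rham obstruction is Lemma \ref{A.7}.

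The heart of the argument is to promote the rational Kerz--Strunk--Tamme/Land--Tamme cartesian square --- comparing $K$ to $\KH$ via $\HC^-$ and $L_\cdh\HC^-$ --- to a cartesian square expressed through logarithmic differentials. Nilinvariance of $\KH$ (Weibel) and of $L_\cdh\HC^-$ over a field of characteristic zero --- the latter since, by Proposition \ref{A.1}, $L_\cdh\Omega^q_{(\underline{X}/t^r)/K}$ depends only on the reduction --- makes the right column constant in $r$. Stacking with the rational HKR decomposition \eqref{A.3.1} on the cdh side, the pro-HKR from Proposition \ref{A.3} on the pro side, and the homotopy cartesian square of Proposition \ref{A.2}, I would obtain (after globalising via $R\Gamma_\setale$ on $X$) the cartesian square of pro-spectra
\[
\begin{tikzcd}[column sep=small]
\{K(\underline{X}/t^r)\}_r \ar[d] \ar[r] & \KH(\underline{X}/t) \ar[d, "\mathrm{trc}_\dR"] \\
\bigoplus_p \{R\Gamma_\setale(X/t^r,\Omega^{\geq p}_{(X/t^r,P)/K})[2p]\}_r \ar[r] & \bigoplus_p R\Gamma_\setale(X/t,\Omega^{\geq p}_{(X/t,P)/K})[2p].
\end{tikzcd}
\]

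The conclusion then follows from Lemma \ref{A.7} applied with these four corners playing the roles of $A_r, B, C_r, D$. Properness of $X$ and coherence of the log differentials on $X/t^r$ ensure that each $\pi_i C_r$ and $\pi_i D$ are finite-dimensional over $K$. The cartesian property identifies $X_r := \mathrm{cofib}(A_r \to B)$ with $Y_r := \mathrm{cofib}(C_r \to D)$, giving an isomorphism $\{\pi_* X_r\}_r \xrightarrow{\cong} \{\pi_* Y_r\}_r$ of pro-$K$-vector spaces. A class $x \in \pi_0\KH(\underline{X}/t)$ lifts to $\lim_r \pi_0 K(\underline{X}/t^r)$ if and only if its image in $\lim_r \pi_{-1} X_r$ vanishes; by this isomorphism and the commutativity provided by the lemma, this is equivalent to $\mathrm{trc}_\dR(x)$ vanishing in $\lim_r \pi_{-1} Y_r$, and by the bottom exact sequence of the lemma, this in turn is equivalent to $\mathrm{trc}_\dR(x)$ lifting to $\lim_r \pi_0 C_r$ --- that is, to condition (ii).

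The main obstacle lies in the careful assembly of the cartesian square: one must verify that after stacking the three constituent cartesian squares, the right vertical is literally $\mathrm{trc}_\dR$ (as defined through the projection $L_\cdh\Omega^{\geq p}_{(-/t)/K} \to \Omega^{\geq p}_{(-/t,P)/K}$), and that the bottom horizontal arises from the closed immersion $X/t \hookrightarrow X/t^r$. A secondary subtlety is the promotion of Proposition \ref{A.3} from the level of $\pi_*$-pro-isomorphisms to a pro-spectrum equivalence, which relies on the rational splitting of the HKR filtration on $\HC^-$ via Adams operations, much as in the proof of Proposition \ref{rig.3}.
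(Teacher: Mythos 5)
Your proposal follows essentially the same route as the paper: the KST/Land--Tamme square comparing $K(\underline{X}/t^r)$ with $\KH(\underline{X}/t)$ through $\HC^-$ and $L_\cdh\HC^-$, the cdh-HKR decomposition \eqref{A.3.1}, the pro-HKR statement of Proposition \ref{A.3}, the homotopy cartesian square of Proposition \ref{A.2}, and Lemma \ref{A.7} with finiteness supplied by properness. The one organizational difference is that you stack everything into a single cartesian square of pro-spectra and invoke Lemma \ref{A.7} once, whereas the paper applies Lemma \ref{A.7} twice: first to the KST/Land--Tamme square itself (with $\{C_r\}=\{\HC^-(\underline{X}/t^r)\}$, identified with naive Hodge-filtered de Rham cohomology only after passing to $\lim_r\pi_*$), and then to the square of Proposition \ref{A.2} to trade the non-logarithmic filtration for the logarithmic one. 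This two-step structure is not cosmetic: Lemma \ref{A.7} deliberately only asks that $\{C_r\}$ be pro-isomorphic on homotopy to something finite-dimensional, precisely so that Proposition \ref{A.3} need never be upgraded to a map or equivalence of pro-spectra. Your single-square assembly does require such an upgrade (you need an actual arrow from $\{K(\underline{X}/t^r)\}_r$ to $\bigoplus_p\{R\Gamma_\setale(X/t^r,\Omega^{\geq p}_{(X/t^r,P)/K})[2p]\}_r$), and the fix you suggest via Adams operations splits the HKR filtration whose graded pieces are the \emph{derived} wedge powers of the cotangent complex, not the naive truncations $\Omega^{\geq p}$ appearing here; the passage from derived to naive is exactly the pro-homotopy-group statement of Proposition \ref{A.3}. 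So either adopt the paper's two-step application of Lemma \ref{A.7}, or justify the pro-spectrum refinement separately.
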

\begin{proof}
By \cite[(4.1)]{EM23},
we have a natural cartesian square
\[
\begin{tikzcd}
K(\underline{X}/t^r)\ar[d]\ar[r]&
\mathrm{KH}(\underline{X}/t)\ar[d]
\\
\mathrm{HC}^-(\underline{X}/t^r)\ar[r]&
L_\mathrm{cdh}\mathrm{HC}^{-}(\underline{X}/t).
\end{tikzcd}
\]
Using \eqref{A.3.1}, Proposition \ref{A.3}, and Lemma \ref{A.7},
we obtain a commutative diagram with exact rows
\[
\begin{tikzcd}
\lim_r \pi_0K(\underline{X}/t^r)\ar[d]\ar[r]&
\pi_0\mathrm{KH}(\underline{X}/t)\ar[d]\ar[r]&
\mathcal{F}\ar[d,equal]
\\
\lim_r \bigoplus_{p\in \Z} H_\et^{2p}(\underline{X}/t^r,\Omega_{(\underline{X}/t^r)/K}^{\geq p})\ar[r]&
\bigoplus_{p\in \Z}
H_\et^{2p}(\underline{X}/t,L_\mathrm{cdh}\Omega_{(\underline{X}/t)/K}^{\geq p})\ar[r]&
\mathcal{F}
\end{tikzcd}
\]
for some $\mathcal{F}$.
Since $X$ is proper and $\Omega_{(X/t^r)/K}^{\geq p}$ is a perfect complex,
$H^{2p}_\setale(X/t^r,\Omega_{(X/t^r)/K}^{\geq p})$ is finite dimensional.
Hence Proposition \ref{A.2} yields a commutative square of pro-$K$-complexes
\[
\begin{tikzcd}
\{R\Gamma_\et(\underline{X}/t^r,\Omega_{(\underline{X}/t^r)/K}^{\geq p})\}_r\ar[r]\ar[d]&
R\Gamma_\et(\underline{X}/t,L_\mathrm{cdh}\Omega_{(\underline{X}/t)/K}^{\geq p})\ar[d]
\\
\{R\Gamma_\setale(X/t^r,\Omega_{(X/t^r)/K}^{\geq p})\}_r\ar[r]&
R\Gamma_\setale(X/t,\Omega_{(X/t)/K}^{\geq p})
\end{tikzcd}
\]
satisfying the assumption of Lemma \ref{A.7}.
Whence the claim follows.
\end{proof}

\begin{rmk}
\label{A.6}
We do not know how to formulate a statement analogous to Theorem \ref{A.5} using the composite
\begin{align*}
\pi_0\mathrm{KH}(\underline{X}/t)
\xrightarrow{\mathrm{trc}_\mathrm{dR}}
&
\bigoplus_{p\in \Z}
H^{2p}_\setale((X/t)/K,\Omega_{(X/t)/K}^{\geq p})
\\
\to &
\bigoplus_{p\in \Z}
H^{2p}_\setale((X/t)/(K,\N),\Omega_{(X/t)/(K,\N)}^{\geq p}).
\end{align*}
In future work,
we plan to prove a positive characteristic analogue of Theorem \ref{A.5} generalizing \cite[Theorem 0.2]{Mor19}. 
In view of the log Tate conjecture due to Kato-Nakayama-Usui \cite[Section 1]{KNU},
working with the cohomology over $K$ instead of $(K,\N)$ would be more appropriate.
\end{rmk}

\noindent
{\bf Data Availability:} This manuscript has no associated data.\\
{\bf Declarations}\\
{\bf Conflict of interest:} The authors have no Conflict of interest.

\bibliography{bib}
\bibliographystyle{siam}

\end{document}